\documentclass{article}
\usepackage{amsmath,amssymb,amsthm,amsfonts,amscd}

 \newtheorem{thm}{Theorem}[section]
 \newtheorem{cor}[thm]{Corollary}
 
 \newtheorem{prop}[thm]{Proposition}
 \theoremstyle{definition}
 \newtheorem{defn}[thm]{Definition}
 \theoremstyle{remark}
 \newtheorem{rem}[thm]{Remark}
 \newtheorem*{ex}{Example}
 \numberwithin{equation}{section}

\newcommand{\cw}{\stackrel{D}{\rightharpoonup}}

\newcommand{\N}{\mathbb{N}}
\newcommand{\R}{\mathbb{R}}

\newcommand{\Z}{\mathbb{Z}}

\begin{document}
\title{Concentration analysis and cocompactness}

\author{Cyril Tintarev\\
Department of Mathematics\\
Uppsala University\\
Box 480\\
751 06 Uppsala\\
Sweden}
\maketitle

\begin{abstract} Loss of compactness that occurs in may significant PDE settings can be expressed in a well-structured
form of profile decomposition for sequences. Profile decompositions are formulated in relation to a triplet $(X,Y,D)$, 
where $X$ and $Y$ are Banach spaces, $X\hookrightarrow Y$, and $D$ is, typically, a set of surjective isometries on both $X$ and $Y$. 
A profile decomposition is a representation of a bounded sequence in $X$ as a sum of elementary concentrations of the form 
$g_kw$, $g_k\in D$, $w\in X$, and a remainder that vanishes in $Y$. A necessary requirement for $Y$ is, therefore,  
that any sequence in $X$ that develops no $D$-concentrations has a subsequence convergent in the norm of $Y$. 
An imbedding $X\hookrightarrow Y$ with this property is called $D$-cocompact, a property weaker than, but related to, compactness. 
We survey known cocompact imbeddings and their role in profile decompositions. 
\end{abstract}


\section{Introduction}

Convergence of functional sequences is easy to obtain in problems where
one can invoke compactness, typically via a compact imbedding $X\hookrightarrow Y$
of two Banach spaces. In many cases, however, one deals with problems
in functional spaces that possess some non-compact invariance, such as
translational or scaling invariance, which produces non-compact orbits 
and thus makes any invariant imbedding trivially non-compact. Fortunately, the same set of invariances that destroys
compactness can be employed to restore it. This approach, historically
called the concentration compactness principle, emerged in the 1980's from the analysis of concentration
phenomena by Uhlenbeck, Brezis, Coron, Nirenberg, Aubin and Lions, and remains widely used, in a standardized formulation,
in terms of sequences of measures, due to Willem and Chabrowski. The early concentration analysis has been then refounded 
by two systematic theories developed in mutual isolation, a functional-analytic one
(\cite{Revista}, with origins in \cite{Solimini}), and a wavelet-based one, in function spaces, 
(see Bahouri, Cohen and Koch \cite{BCK} whose origins are in G\'erard's paper \cite{Gerard}). 

The purpose of this survey is to present current results of general concentration analysis as well 
as some areas of its advanced applications, such as elliptic problems of Trudinger-Moser
type and ``mass-critical'' dispersive equations, where cocompactnes of Strichartz imbeddings was proved and employed by Terence Tao and
his collaborators.

The key element of the cocompactness theory is the premise that compactness of imbeddings of two functional spaces, 
which in many cases is attributed to the scaling invariance $u\mapsto t^{r}u(t\cdot)$ (that leads to localized non-compact 
sequences of ``blowups'' or ``bubbles'' $t_k^{r}w(t_k\cdot)$, $t_k\to\infty$), 
can be caused by invariance with respect to any other group of operators
acting isometrically on two imbedded spaces $X\hookrightarrow Y$. 
Furthermore, proponents of the cocompactness theory insist that there are many concrete applications which involve
such operators (``gauges'' or ``dislocations'') that are quite different from the Euclidean blowups. Indeed, 
recent literature contains concentration analyis of sequences in Sobolev and Strichartz spaces,
involving  actions of anisotropic or inhomogeneous dilations, of isometries of Riemannian manifolds (or more generally, of conformal groups 
on sub-Riemannian manifolds and other metric structures) and of transformations in the Fourier domain.

Improvement of convergence, based on elimination of concentration (understood in the abstract sense as terms of the form
$g_kw$, where $\{g_k\}$ is a non-compact sequence of gauges) can be illustrated in the sequence spaces on an elementary example 
based on Proposition 1 of \cite{Jaffard}.
\begin{ex}
The imbedding $\ell^{p}(\Z)\hookrightarrow\ell^{q}(\Z)$,
$1\le p<q\le\infty$ is not compact, since sequences of the form $u(\cdot +k)$
converge to zero weakly in $\ell^p$, but have constant $\ell^\infty$-norm.
Let us decide that $u(\cdot+j_k)$ with $|j_k|\to\infty$ is a typical ``concentrating behavior`` and 
eliminate it from a given sequence  $u_{k}\in\ell^{p}$ 
by assuming that for any sequence
$j_{k}\in\Z$, one has $u_{k}(\cdot+j_{k})\rightharpoonup0$ in $\ell^{p}$. 
Then $u_{k}(j_{k})\to0$ in $\R$ for any sequence $j_{k}\in Z$, 
which implies $u_{k}\to0$ in $\ell^{\infty}$. Since
$\|u\|_{q}^{q}\le\|u\|_{\infty}^{q-p}\|u\|_{p}^{p}$, one also has
$u_{k}\to0$ in $\ell^{q}$ for any $q>p$. 
We conclude that elimination of possible ``concentration'' caused by shifts $u\mapsto u(\cdot+j),\; j\in\Z$,
assures convergence in $\ell^{q}$.
 \end{ex}
This example gives motivation to the following definitions.

\begin{defn}(Gauged weak convergence.) Let $X$ be a Banach
space, and let $D$ be a bounded set of bounded linear operators on
$X$ containing the identity operator. One says that a sequence $u_{k}\in D$
converges to zero $D$-weakly if $g_{k}u_{k}\rightharpoonup0$ with
any choice of sequence $(g_{k})\subset D$. We denote the gauged weak
convergence as $u_{k}\cw0$.
\end{defn}
\begin{defn}
Let $X$ be a Banach space continuously imbedded
into a Banach space $Y$. One says that the imbedding $X\hookrightarrow Y$
is cocompact (relative to the set $D$) if $u_{k}\cw0$ implies $\|u_{k}\|_{Y}\to0$.
\end{defn}
\begin{defn}
One says that the norm of $Y$ provides a (local) metrization
of the $D$-weak convergence on $X$, if for any bounded sequence $u_{k}\in X$,
\[
u_{k}\cw0\Longleftrightarrow\|u_{k}\|_{Y}\to0.
\] 
\end{defn}
Note that we speak only about \emph{local} metrization, on the balls of $X$, and
that convergencies in different norms, for sequences restricted to such balls, become equivalent. For example,
all $\ell^{q}$-convergences with $q>p\ge1$ are equivalent on a ball of $\ell^{p}$, 
and all $L^{q}$-convergences with $q\in(p,\frac{pN}{N-p})$
are equivalent on a ball of $W^{1,p}(\R^{N})$, $1\le p<N$ (by
the limiting Sobolev imbedding and the H\"older inequality).
\begin{ex}
Let $X=H_{0,\mathrm{rad}}^{1}(B)$ be the subspace of all
radial functions in the Sobolev space $H_{0}^{1}(B)$, let $Y=L^{6}(B)$, where $B\subset\R^{3}$
is a unit ball, and let $D=\lbrace h_{t}u(x)=t^{1/2}u(tx)\rbrace_{t>0}$. \\
The lack of compactness of the imbedding $X\hookrightarrow Y$ is demonstrated by the blowup
sequences $t_{k}^{1/2}w(t_{k}x)$ with $t_{k}\to\infty$ and $w\in\mathcal{D}_{rad}^{1,2}(\mathbb{R}^{3})\setminus\{0\}$.
If, however, $u_k$ is a bounded sequence in $X$ and all its ``deflation sequences`` $t_{k}^{-1/2}u_{k}(t_{k}^{-1}x)$,
with $t_{k}\to\infty$, converge  to zero weakly in $X$, the sequence $u_k$ has a subsequence convergent in $Y$.
This we express as $D$-cocompactness of the imbedding $H_{0,\mathrm{rad}}^{1}(B)\hookrightarrow L^{6}(B)$. 
See Proposition~\ref{prop:radial} below with an elementary proof.
\end{ex}
While the term \emph{cocompact imbedding} is recent, the property
itself has been known for Sobolev spaces for decades, and can traced to a lemma 
by Lieb \cite{Lieb}. Cocompactness of imbeddings relative to rescalings (actions of translations
and dilations), which for the Sobolev spaces is usually credited to Lions \cite{PLL2a}, is known today 
for a range of Besov and Triebel-Lizorkin
spaces as $X$ and different Besov, Triebel-Lizorkin, $ $Lorentz
and BMO spaces as $Y$ (\cite{BCK}, once we note that the crucial Assumption 1 in this paper, 
expressed in terms of the wavelet bases implies cocompactness and is in a general case equivalent to it,
see the argument in subsection 2.2 below). 
Cocompactness of Sobolev imbeddings is known also for function spaces on manifolds, with
the role of translations and dilations taken over by the conformal group. Cocompactness is also established in 
Trudinger-Moser imbeddings (with inhomogeneous dilations) and in Strichartz imbeddings in dispersive equations. 
Section 2 gives a summary of known cocompact imbeddings. 
\par
Section 3 studies profile decompositions that arise in the presence of cocompact imbeddings. These are largely functional-analytic results,
established in two general cases, for Hilbert spaces and for Banach spaces imbedded into $L^p$-spaces. 
Beyond that, profile decompositions have been proved for a wide range of imbeddings for spaces that have a wavelet basis of rescalings,
presented in \cite{BCK} (with a remainder that vanishes in a weaker sense than in the $Y$-norm),  
as well as for some Trudinger-Moser and Strichartz imbeddings. First profile decompositions in literature were found
for specific sequences, typically, Palais-Smale sequences for semilinear elliptic functionals (Struwe \cite{Struwe84}, Brezis
and Coron \cite{BrezisCoron}, Lions himself \cite{Lions87} and Benci \& Cerami
\cite{BenciCerami}). The first profile decomposition for {\em general} bounded sequences in 
$\mathcal{D}^{1,p}(\mathbb{R}^{N})$  was proved by  Solimini \cite{Solimini}, and, furthermore, from Solimini's argument 
it  also became clear that a profile decomposition is essentially a functional-analytic phenomenon, 
and it is the cocompactness (still not a named property) that requires a substantial hard analysis. 
\par
Section 4 deals with reduction of cocompactness and profile decomposition to subspaces, which in many cases results 
in more specific types of concentration or even if disappearance, as, for example, in case of the radial subspaces of Sobolev spaces 
(Strauss Lemma), as well as list few sample arguments that allow to derive cocompactness by transitivity of imbeddings or interpolation. 
\par
The range of applications of concentration analysis is wider than the scope of this survey. We do not consider here in any great detail 
time evolution of concentration in the initial data that arises in semilinear dispersive equations, 
large-time emergence of concentration in evolution equations, applications to geometric problems, and blow-up arguments for sequences of 
solutions of PDE that may benefit from further built-in structure of the equations. 

\section{Cocompact imbeddings}

\subsection{Example: cocompactness of Sobolev imbeddings with elementary poofs}
The following theorem is essentially a lemma of Lieb from \cite{Lieb}. 
\begin{thm}
 \label{thm:shifta}
Let 
\begin{equation}
D=\lbrace u\mapsto u(\cdot-y),\; y\in\mathbb{Z}^{N}\rbrace.\label{eq:shifts}
\end{equation}
Assume that $N>p>1$. The Sobolev imbedding $W^{1,p}(\mathbb{R}^{N})\hookrightarrow L^{q}$,
$p\le q\le p^{*}=\frac{pN}{N-p}$, is cocompact relative to the group
$D$, unless $q=p$ or $q=p^{*}$. The $L^{q}$-norm gives a $W^{1,p}$-local
metrization of the $D$-weak convergence.
\end{thm}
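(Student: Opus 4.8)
The plan is to show that a bounded sequence $u_k$ in $W^{1,p}(\mathbb{R}^N)$ with $u_k \cw 0$ (relative to integer shifts) satisfies $\|u_k\|_{L^q}\to 0$ for every $q\in(p,p^*)$, which is exactly the cocompactness statement; the metrization claim then follows because on a ball of $W^{1,p}$ all the intermediate $L^q$-convergences are equivalent (H\"older interpolation between $L^p$ and $L^{p^*}$) and each forces $u_k\cw 0$ trivially, since $g_k u_k$ is also bounded and converges to $0$ in $L^q$, hence weakly. The core of the argument is the classical Lieb-type lemma, reformulated in the present language.

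First I would fix the endpoint exponent $p^* = \frac{pN}{N-p}$ and decompose $\mathbb{R}^N = \bigcup_{y\in\mathbb{Z}^N}(Q+y)$ into unit cubes. On a single cube $Q+y$, the Gagliardo--Nirenberg--Sobolev inequality on $Q$ (a bounded Lipschitz domain) together with interpolation gives, for $q\in(p,p^*)$,
\[
\|u\|_{L^q(Q+y)}^q \;\le\; C\,\|u\|_{L^p(Q+y)}^{q-\theta}\,\|u\|_{W^{1,p}(Q+y)}^{\theta}
\]
for a suitable $\theta>0$; more precisely one wants a bound of the form $\|u\|_{L^q(Q+y)}^q \le C \|u\|_{W^{1,p}(Q+y)}^{p}\,\|u\|_{L^p(Q+y)}^{q-p}$, obtained by writing $q = p + (q-p)$ and using H\"older with the embedding $W^{1,p}(Q)\hookrightarrow L^{q}(Q)$ (valid since $q<p^*$, indeed $q\le p^*$ suffices and for $q<p^*$ the embedding is even compact). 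Summing over $y\in\mathbb{Z}^N$ and using that $\sum_y \|u_k\|_{W^{1,p}(Q+y)}^p \le \|u_k\|_{W^{1,p}(\mathbb{R}^N)}^p$ is bounded, I get
\[
\|u_k\|_{L^q(\mathbb{R}^N)}^q \;\le\; C\Big(\sup_{y\in\mathbb{Z}^N}\|u_k\|_{L^p(Q+y)}\Big)^{q-p}\,\|u_k\|_{W^{1,p}(\mathbb{R}^N)}^p.
\]
So it suffices to prove $\sup_{y\in\mathbb{Z}^N}\|u_k\|_{L^p(Q+y)}\to 0$.

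To prove this last claim, suppose not: then along a subsequence there exist $y_k\in\mathbb{Z}^N$ with $\|u_k\|_{L^p(Q+y_k)}\ge \delta>0$. Set $v_k = u_k(\cdot + y_k)$; these are the shifted sequences, $v_k = g_k u_k$ with $g_k\in D$, so the hypothesis $u_k\cw 0$ gives $v_k\rightharpoonup 0$ in $W^{1,p}(\mathbb{R}^N)$. But $\{v_k\}$ is bounded in $W^{1,p}(Q')$ for any cube $Q'\supset Q$, and the embedding $W^{1,p}(Q')\hookrightarrow L^p(Q')$ is compact (Rellich--Kondrachov), so $v_k\to 0$ strongly in $L^p(Q)$, contradicting $\|v_k\|_{L^p(Q)}=\|u_k\|_{L^p(Q+y_k)}\ge\delta$. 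This contradiction proves $\sup_y\|u_k\|_{L^p(Q+y)}\to 0$ and hence $\|u_k\|_{L^q}\to 0$.

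The main obstacle is not any single deep step but making sure the exponent bookkeeping in the local interpolation inequality is uniform over the translates $Q+y$ — this is where translation invariance of the cube decomposition is essential, so that the constant $C$ does not depend on $y$ — and checking that the borderline cases $q=p$ and $q=p^*$ genuinely must be excluded: for $q=p$ the estimate degenerates (the $\sup$ factor has exponent zero) and indeed $u_k(\cdot+j_k)\rightharpoonup 0$ does not force $\|u_k\|_{L^p}\to 0$; for $q=p^*$ the local embedding $W^{1,p}(Q)\hookrightarrow L^{p^*}(Q)$ is no longer compact, so the Rellich step fails, and concentration at a point (Euclidean blowups, not captured by integer shifts) destroys cocompactness. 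I would remark on both obstructions explicitly rather than absorb them silently.
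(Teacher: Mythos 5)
Your overall scheme is the paper's own ``cells of compactness'' argument (unit-cube decomposition, extraction of a supremum over translates, Rellich on a cube at the worst shifts), but the specific local inequality you rely on is false on part of the claimed range. The bound
\[
\|u\|_{L^{q}(Q+y)}^{q}\le C\,\|u\|_{W^{1,p}(Q+y)}^{p}\,\|u\|_{L^{p}(Q+y)}^{q-p}
\]
fails whenever $N(q-p)>p^{2}$: testing with $u_\epsilon(x)=\phi((x-x_0)/\epsilon)$, $\phi\in C_c^\infty$, gives left-hand side $\sim\epsilon^{N}$ and right-hand side $\sim\epsilon^{\,N-p+N(q-p)/p}$, so the inequality requires $p^{2}\ge N(q-p)$; equivalently, your H\"older derivation (splitting $q=p+(q-p)$ and putting the $|u|^{p}$ factor into $L^{p^{*}/p}$) only goes through for $q\le p+\frac{p^{2}}{N}$, and since $p^{*}-p=\frac{p^{2}}{N-p}>\frac{p^{2}}{N}$, the interval $\bigl(p+\frac{p^{2}}{N},\,p^{*}\bigr)$ where the step breaks down is always nonempty. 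Consequently your displayed global estimate, and with it the reduction of cocompactness to $\sup_{y}\|u_k\|_{L^{p}(Q+y)}\to0$, is not justified for $q$ near $p^{*}$.

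The gap is repairable in two ways, and you essentially have both at hand. The paper's route keeps the local $L^{q}$ mass (not $L^{p}$) in the extracted supremum: by the Sobolev embedding on the cube, $\int_{Q+y}|u|^{q}\le C\,\|u\|_{W^{1,p}(Q+y)}^{p}\bigl(\int_{Q+y}|u|^{q}\bigr)^{1-p/q}$, which sums over $y$ for every $q\in(p,p^{*})$, and the vanishing of the worst-shift term then uses compactness of $W^{1,p}(Q)\hookrightarrow L^{q}(Q)$, valid for all subcritical $q$ (your Rellich step with $L^{p}(Q)$ replaced by $L^{q}(Q)$ works verbatim). Alternatively, run your argument only for one exponent $q_{0}\in\bigl(p,\,p+\frac{p^{2}}{N}\bigr]$, where your inequality is true, and then invoke the equivalence of the intermediate $L^{q}$-convergences on bounded subsets of $W^{1,p}$ (interpolation between $L^{p}$ and $L^{p^{*}}$), which you already use for the metrization claim, to cover all $q\in(p,p^{*})$. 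The remaining ingredients of your proposal --- the worst-shift contradiction, the discussion of why $q=p$ and $q=p^{*}$ must be excluded, and the metrization argument --- are correct and consistent with the paper.
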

The original statement asserted only convergence in measure, but under a $W^{1,p}$-bound, so that convergence in measure implies
convergence in $L^q$, $p\le q\le p^{*}=\frac{pN}{N-p}$. Moreover, the original statement (and many other
cocompactness statements in literature) is expressed in terms of negation: if a bounded sequence in 
$W^{1,p}$ does not converge in measure, then it has a subsequence with a nonzero, under suitable translations, weak limit. 
\begin{proof} Let $Q=(0,1)^{N}$. Assume that and $u_{n}(\cdot-y_{n})\rightharpoonup0$
in $W^{1,p}(\mathbb{R}^{N})$ for any sequence $y_{n}\in\mathbb{Z}^{N}$.
For every $y\in\mathbb{Z}^{N}$we have by the Sobolev inequality 
\[
\int_{Q+y}|u_{n}|^{q}\le\int_{Q+y}(|\nabla u_{n}|^{p}+|u_{n}|^{p})\;\left(\int_{Q+y}|u_{n}|^{q}\right)^{1-p/q}.
\]
Adding up the inequalities over $y\in\mathbb{Z}^{N}$ while estimating
the last factor by the supremum over $y$, and replacing the supremum
by twice the value of the term at the ``worst'' values of $y_{n}$,
we get

\[
\int_{\R^{N}}|u_{n}|^{q}\le\left(\int_{Q}|u_{n}(\cdot-y_{n})|^{q}\right)^{1-p/q},
\]
which converges to zero as a consequence of compactness
of the subcritical Sobolev imbedding over $Q$.
\end{proof}
This is a model proof of cocompactness that uses partitioning of
the norm of the target space into ``cells of compactness'', in this
case involving the fundamental domain of the lattice group acting
on $\mathbb{R}^{N}$, where one can benefit from compactness, followed
by a reassembly of the full norm from the vanishing terms. In the wavelet approach presented in subsection 2.2
the role similar to that of ``compactness cell'' is played by the mother wavelet.
\par
The limiting Sobolev imbedding $\mathcal D^{1,p}(\R^N)\hookrightarrow L^\frac{pN}{N-p}(\R^N)$ is cocompact
only if one enlarges the group $D$ by the action of dilations. The following statement originates in Lions \cite{PLL2a} with 
three different proofs given later by Solimini \cite{Solimini}; G\'erard \cite{Gerard} (for $p=2$) and Jaffard \cite{Jaffard}; 
and the author \cite{ccbook}. 
\begin{thm}
Let $\gamma>0$, $N>p\ge1$,
$r=\frac{N-p}{p}$, and let 
\begin{equation}
D=\lbrace u\mapsto\gamma^{rj}u(\gamma^{j}(\cdot-y)),\; y\in\mathbb{R}^{N},j\in\mathbb{Z}\rbrace.\label{eq:rescalings}
\end{equation}
The imbedding $\mathcal{D}^{1,p}(\mathbb{R}^{N})\hookrightarrow L^{\frac{pN}{N-p}}$
is cocompact relative to the group $D$. Furthermore, the $L^\frac{pN}{N-p}$-norm
on bounded subsets of $\mathcal{D}^{s,p}(\mathbb{R}^{N})$ provides
a metrization of $D$-weak convergence.
\end{thm}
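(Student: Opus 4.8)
The plan is to adapt the cell-decomposition strategy used for Theorem~\ref{thm:shifta} to the larger group of rescalings, where the ``cells of compactness'' must be chosen at every dyadic (or $\gamma$-adic) scale. Assume $u_n \cw 0$ relative to the group $D$ in \eqref{eq:rescalings}; in particular $u_n$ is bounded in $\mathcal{D}^{1,p}(\R^N)$, and we must show $\|u_n\|_{L^{p^*}} \to 0$ with $p^* = \frac{pN}{N-p}$. The first step is to reduce, by a density and scaling-invariance argument, to controlling the contribution of $u_n$ at a single scale: write $\|u_n\|_{p^*}^{p^*}$ as a sum over a grid of cubes $Q$ of unit size (which by $\gamma$-adic scaling can be taken at any fixed scale), and on each cube apply the Gagliardo--Nirenberg--Sobolev interpolation inequality
\[
\int_Q |u_n|^{p^*} \le C \left(\int_Q |\nabla u_n|^p\right) \left(\int_Q |u_n|^{p^*}\right)^{p^*/p - 1}
\]
(the homogeneous analogue of the inequality used in Theorem~\ref{thm:shifta}, valid because the right-hand exponent $p^*/p - 1 = p/(N-p)$ makes the inequality scale-invariant). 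Summing over the grid and bounding the last factor by its supremum over cubes, one gets $\|u_n\|_{p^*}^{p^*} \le C \|\nabla u_n\|_p^p \sup_Q \left(\int_Q |u_n|^{p^*}\right)^{p^*/p-1}$, so it suffices to show that the supremum over all unit cubes of $\int_Q |u_n|^{p^*}$ tends to zero; and since the grid can be placed at scale $\gamma^{-j}$ for any $j$, and translated arbitrarily, it is really the supremum over all $\gamma$-adic cubes of all sizes that must vanish.

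The second step is the key compactness input. Suppose, for contradiction, that $\sup$ over $\gamma$-adic cubes does not go to zero: then, after passing to a subsequence, there exist cubes $Q_n$ of side $\gamma^{-j_n}$ and centers $y_n$ with $\int_{Q_n}|u_n|^{p^*} \ge \delta > 0$. Apply the rescaling $g_n \in D$ that maps $Q_n$ to the unit cube $Q_0$: set $v_n(x) = \gamma^{-r j_n} u_n(\gamma^{-j_n} x + y_n)$ (or the appropriate normalization so that $g_n u_n = v_n$), which is again bounded in $\mathcal{D}^{1,p}(\R^N)$ and satisfies $\int_{Q_0}|v_n|^{p^*} \ge \delta$. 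By $D$-weak convergence, $v_n \rightharpoonup 0$ in $\mathcal{D}^{1,p}$, hence $v_n \rightharpoonup 0$ in $W^{1,p}(Q_0')$ for a slightly larger cube $Q_0'$; but the Rellich--Kondrachov theorem gives $v_n \to 0$ strongly in $L^{p^*-\varepsilon}(Q_0')$ for small $\varepsilon>0$, and then, interpolating with the uniform $L^{p^*}$-bound on $Q_0'$ (which holds by the Sobolev embedding applied to $v_n$), one gets $v_n \to 0$ in $L^{p^*}(Q_0)$ as well, contradicting $\int_{Q_0}|v_n|^{p^*}\ge\delta$. This contradiction forces the supremum to vanish, and combined with Step~1 we conclude $\|u_n\|_{p^*}\to 0$, i.e.\ cocompactness.

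The metrization claim is then the combination of cocompactness with the elementary converse: if $u_n$ is bounded in $\mathcal{D}^{1,p}$ and $\|u_n\|_{p^*}\to0$, then for any $g_n\in D$ the functions $g_nu_n$ have vanishing $L^{p^*}$-norm (since $D$ acts isometrically on $L^{p^*}$) while staying bounded in $\mathcal{D}^{1,p}$; any weak limit $w$ of a subsequence of $g_nu_n$ must then have $\|w\|_{p^*}=0$ by weak lower semicontinuity of the norm under the embedding $\mathcal{D}^{1,p}\hookrightarrow L^{p^*}$, so $w=0$, which gives $u_n\cw0$. (The reference in the statement to $\mathcal{D}^{s,p}$ is presumably a typo for $\mathcal{D}^{1,p}$, or an indication that the same argument runs for fractional Sobolev spaces with the Gagliardo--Nirenberg inequality replaced by the appropriate fractional one.)

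The main obstacle I expect is the passage from the pointwise-in-scale compactness in Step~2 back to a \emph{uniform} statement: a priori the bad cubes $Q_n$ could have side lengths $\gamma^{-j_n}$ with $j_n\to+\infty$ or $j_n\to-\infty$, and the argument must survive the rescaling to the unit cube uniformly in these diverging scales — this is exactly where the scale-invariance of the Gagliardo--Nirenberg exponent and the homogeneity of $\mathcal{D}^{1,p}$ are essential, and where a naive inhomogeneous $W^{1,p}$-argument (as in Theorem~\ref{thm:shifta}) breaks down. A secondary subtlety is ensuring that the grid summation in Step~1 is genuinely summable after extracting the supremum, i.e.\ that $p^*/p - 1 > 0$, which holds precisely because $q = p^*$ is strictly supercritical relative to $p$ in the homogeneous scaling.
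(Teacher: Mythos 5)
Your Step 1 collapses at the very first inequality: on a unit cube $Q$ there is no Sobolev inequality with only the gradient on the right-hand side, since a nonzero constant (or the restriction to $Q$ of a $\mathcal{D}^{1,p}(\R^N)$ function that is locally constant there) has $\int_Q|\nabla u|^p=0$ but $\int_Q|u|^{p^*}>0$; moreover the exponent $p^*/p-1$ does not even match the homogeneity in $u$ (the subcritical inequality in Theorem~\ref{thm:shifta} carries $1-p/q$). The only correct local substitutes are the inhomogeneous inequality with the full $W^{1,p}(Q)$-norm, which reinstates the $L^p$ term that the $\mathcal{D}^{1,p}$-bound does not control and whose scaling ruins summation over cubes of all sizes, or the Sobolev--Poincar\'e inequality for $u-\bar u_Q$, after which the sum over the grid no longer reassembles $\|u\|_{p^*}^{p^*}$ because of the means. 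This is precisely why the tessellation proof of Theorem~\ref{thm:shifta} does not extend to the critical imbedding, and why the known proofs go through a weaker intermediate norm (Lorentz $L^{\frac{pN}{N-p},\infty}$ in Solimini, a negative-order Besov norm in G\'erard and Jaffard, the wavelet condition \eqref{eq:Assumption1} in \cite{BCK}) combined with argument \textbf{C} of Section~4.1.

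Step 2 has an independent gap: from $v_n\rightharpoonup0$ in $W^{1,p}(Q_0')$, Rellich--Kondrachov gives $v_n\to0$ in $L^q(Q_0')$ only for $q<p^*$, and interpolating this with a mere bound in $L^{p^*}$ yields convergence in $L^s$ for $s<p^*$ only; to reach $L^{p^*}$ you would need a uniform bound in some $L^s$ with $s>p^*$, which you do not have. Concretely, a bubble concentrating inside $Q_n$ at a scale much finer than $\gamma^{-j_n}$ satisfies $\int_{Q_n}|u_n|^{p^*}\ge\delta$ and, after your rescaling, still gives $v_n\rightharpoonup0$, so no contradiction arises: selecting the cube only neutralizes concentration at the cube's own scale, and controlling all finer scales simultaneously is exactly the hard part your reduction was meant to supply. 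Note also that the paper does not reprove the full theorem (it cites Lions, Solimini, G\'erard, Jaffard and \cite{ccbook}); what it proves is the radial reduction, Proposition~\ref{prop:radial}, where the ``cells of compactness'' are level sets $\lbrace|u_k|\in(\gamma^{j},\gamma^{j+1})\rbrace$ in the range of $u$ rather than spatial cubes: these interact correctly with the dilation group, the critical Sobolev inequality applied to the truncations needs only the gradient plus a Hardy term on the right, and radiality converts weak convergence of the rescaled sequences into pointwise convergence, which makes the worst level-set term vanish. Your argument for the metrization direction is essentially correct and agrees with the paper's, but the cocompactness direction needs a different mechanism.
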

This result is usually rendered in literature with the scaling factor in $(0,\infty)$ instead of the discrete
subset $\gamma^{\Z}$. Sufficiency of the discrete values can be observed by following the available proofs, or derived a posteriori.
We give below a proof the statement reduced to the case of the radial subspace of $\mathcal{D}^{1,p}$.
\begin{prop}
\label{prop:radial}
Let $\gamma>1$, $N>p>1$, and let 
\[
D=\lbrace u\mapsto\gamma^{j}u(\gamma^{\frac{p}{N-p}j}\;\cdot),\;j\in\mathbb{Z}\rbrace.
\]
The limiting Sobolev imbedding of the radial subspace of $\mathcal{D}^{1,p}(\mathbb{R}^{N})$, 
$\mathcal{D}_{\mathrm{rad}}^{1,p}(\mathbb{R}^{N})\hookrightarrow L^{\frac{pN}{N-p}}(\mathbb{R}^{N})$,
is cocompact relative to the group $D$ and the $L^{\frac{pN}{N-p}}$-norm
provides a $\mathcal{D}^{1,p}(\mathbb{R}^{N})$-local metrization of
the $D$-weak convergence. 
\end{prop}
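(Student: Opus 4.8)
The plan is to imitate the ``partition into compactness cells'' scheme used in Theorem~\ref{thm:shifta}, with the fundamental domain of the lattice $\mathbb{Z}^{N}$ replaced by the spherical annulus $A:=\{x\in\mathbb{R}^{N}:1\le|x|<\rho\}$, $\rho:=\gamma^{p/(N-p)}>1$, which is a fundamental domain for the action of $D$ on radial functions on $\mathbb{R}^{N}\setminus\{0\}$. Write $p^{*}=\frac{pN}{N-p}$ and $g_{j}u:=\gamma^{j}u(\rho^{j}\,\cdot)$, so that $D=\{g_{j}:j\in\mathbb{Z}\}$. A one-line change of variables shows that each $g_{j}$ is a surjective isometry of both $\mathcal{D}^{1,p}(\mathbb{R}^{N})$ and $L^{p^{*}}(\mathbb{R}^{N})$, and that the dilated cells $A_{j}:=\rho^{j}A$ tile $\mathbb{R}^{N}$ up to a null set; hence
\[
\|u\|_{p^{*}}^{p^{*}}=\sum_{j\in\mathbb{Z}}\int_{A_{j}}|u|^{p^{*}}\,dx=\sum_{j\in\mathbb{Z}}\int_{A}|g_{j}u|^{p^{*}}\,dz
\]
for every $u$. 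I would record these facts at the outset and then argue in three steps.

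\emph{Step 1 (the compactness cell).} The only step requiring genuine analysis, and the one I expect to be the main obstacle, is the assertion that the restriction $v\mapsto v|_{A}$ is \emph{compact} from $\mathcal{D}_{\mathrm{rad}}^{1,p}(\mathbb{R}^{N})$ into $L^{p^{*}}(A)$, i.e.\ $v_{k}\rightharpoonup0$ in $\mathcal{D}_{\mathrm{rad}}^{1,p}$ implies $\int_{A}|v_{k}|^{p^{*}}\,dx\to0$. For this I would invoke the radial (Strauss-type) lemma: writing $v(x)=f(|x|)$, the identity $f(r)=-\int_{r}^{\infty}f'$ together with H\"older's inequality against the weight $s^{N-1}$ gives the scale-invariant pointwise bound $|v(x)|\le C|x|^{-(N-p)/p}\|\nabla v\|_{L^{p}(\{|y|\ge|x|\})}$; in particular $\|v\|_{L^{\infty}(A)}\le C\|\nabla v\|_{L^{p}(\mathbb{R}^{N})}$, so restriction is bounded from $\mathcal{D}_{\mathrm{rad}}^{1,p}(\mathbb{R}^{N})$ into $W^{1,p}(A)$. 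On $A$ a radial function is carried by the one-dimensional space $W^{1,p}((1,\rho))$ (the weight $r^{N-1}$ being bounded above and below on $[1,\rho]$), and $W^{1,p}((1,\rho))\hookrightarrow C^{0}([1,\rho])$ is compact for $p>1$; therefore a bounded sequence in $\mathcal{D}_{\mathrm{rad}}^{1,p}$ restricts to a sequence precompact in $C^{0}(\overline{A})$, a fortiori in $L^{p^{*}}(A)$, and weak convergence to $0$ pins the limit at $0$. This is where radial symmetry is used twice, and it is essential: the critical restriction is \emph{not} compact without it.

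\emph{Step 2 (reassembly).} On the fixed domain $A$, the Sobolev inequality gives $\int_{A}|w|^{p^{*}}\le C\|w\|_{W^{1,p}(A)}^{p}\bigl(\int_{A}|w|^{p^{*}}\bigr)^{1-p/p^{*}}$ for $w\in W^{1,p}(A)$. Applying this with $w=g_{j}u_{k}$, summing over $j$, and estimating the last factor by its supremum in $j$, I obtain
\[
\|u_{k}\|_{p^{*}}^{p^{*}}\le C\Bigl(\sup_{j\in\mathbb{Z}}\int_{A}|g_{j}u_{k}|^{p^{*}}\Bigr)^{1-p/p^{*}}\sum_{j\in\mathbb{Z}}\|g_{j}u_{k}\|_{W^{1,p}(A)}^{p}.
\]
The sum is controlled by scaling and one classical inequality: a change of variables gives the localized scale-invariance $\int_{A}|\nabla(g_{j}u_{k})|^{p}\,dz=\int_{A_{j}}|\nabla u_{k}|^{p}\,dx$, which sums to $\|\nabla u_{k}\|_{p}^{p}$, while $\int_{A}|g_{j}u_{k}|^{p}\,dz=\rho^{-jp}\int_{A_{j}}|u_{k}|^{p}\,dx\le\rho^{p}\int_{A_{j}}|x|^{-p}|u_{k}|^{p}\,dx$, which sums, by Hardy's inequality $\int_{\mathbb{R}^{N}}|x|^{-p}|u_{k}|^{p}\le(\tfrac{p}{N-p})^{p}\|\nabla u_{k}\|_{p}^{p}$, to a constant multiple of $\|\nabla u_{k}\|_{p}^{p}$. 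Hence $\sum_{j}\|g_{j}u_{k}\|_{W^{1,p}(A)}^{p}\le M\,\|\nabla u_{k}\|_{p}^{p}$ with $M=M(N,p,\gamma)$.

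\emph{Step 3 (the supremum term and conclusion).} Assume $u_{k}\cw0$. Each $M_{k}:=\sup_{j}\int_{A}|g_{j}u_{k}|^{p^{*}}$ is finite (the pointwise bound of Step~1 gives $\int_{A}|g_{j}u_{k}|^{p^{*}}\le C|A|\,\|\nabla u_{k}\|_{p}^{p^{*}}$ uniformly in $j$), so choose $j_{k}$ with $\int_{A}|g_{j_{k}}u_{k}|^{p^{*}}\ge M_{k}/2$. If $M_{k}\not\to0$, then along a subsequence $\int_{A}|g_{j_{k}}u_{k}|^{p^{*}}$ stays bounded away from $0$; but $(g_{j_{k}})\subset D$, so $g_{j_{k}}u_{k}\rightharpoonup0$ in $\mathcal{D}_{\mathrm{rad}}^{1,p}$ by the definition of $D$-weak convergence, and Step~1 forces $\int_{A}|g_{j_{k}}u_{k}|^{p^{*}}\to0$, a contradiction; hence $M_{k}\to0$. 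Since $\|\nabla u_{k}\|_{p}$ is bounded, the estimate of Step~2 yields $\|u_{k}\|_{p^{*}}^{p^{*}}\le CM\,\|\nabla u_{k}\|_{p}^{p}\,M_{k}^{1-p/p^{*}}\to0$, which is $D$-cocompactness. For the metrization statement only the elementary converse remains: if $u_{k}$ is bounded in $\mathcal{D}^{1,p}(\mathbb{R}^{N})$ with $\|u_{k}\|_{p^{*}}\to0$, then for any $(g_{k})\subset D$ the sequence $g_{k}u_{k}$ is bounded in $\mathcal{D}^{1,p}$ (isometry) and $\|g_{k}u_{k}\|_{p^{*}}=\|u_{k}\|_{p^{*}}\to0$, so every subsequential weak limit in $\mathcal{D}^{1,p}$ is also an $L^{p^{*}}$-weak limit, hence $0$; thus $g_{k}u_{k}\rightharpoonup0$ and $u_{k}\cw0$. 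Apart from Step~1, everything is scaling bookkeeping plus the classical Hardy inequality, and I anticipate no difficulty there.
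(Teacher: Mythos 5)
Your argument is correct, but it follows a genuinely different route from the paper's. You tile physical space by the annuli $A_j=\rho^jA$ (the exact analogue of the lattice-cell proof of Theorem~\ref{thm:shifta}), and the heart of your proof is the ``compactness cell'': the Strauss-type pointwise bound $|v(x)|\le C|x|^{-(N-p)/p}\|\nabla v\|_p$ plus the compact one-dimensional imbedding $W^{1,p}((1,\rho))\hookrightarrow C^0([1,\rho])$, which makes restriction of $\mathcal{D}^{1,p}_{\mathrm{rad}}$ to a fixed annulus compact even at the critical exponent; Hardy's inequality then controls the $L^p$-part of the cell norms in the reassembly, and the metrization step is the same elementary scaling/density argument as in the paper. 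The paper instead partitions the \emph{range} of $|u|$: it applies the critical Sobolev inequality to the truncations $\chi_j(u)=\gamma^j\chi(\gamma^{-j}|u|)$ supported on the level sets $\{|u_k|\in(\gamma^{j-1},\gamma^{j+2})\}$, sums over $j$ with Hardy's inequality absorbing the $|u_k|^p/|x|^p$ term, and kills the supremum term by the pointwise (a.e.) vanishing of the rescaled, weakly null radial sequence on the set where its modulus lies in $(1,\gamma)$. So both proofs share the ``cells of compactness plus Hardy'' architecture, but your cells are geometric annuli while the paper's are level sets of $|u|$; your version is more self-contained and makes the role of radial symmetry very explicit (it enters twice, in the $L^\infty$ bound and in the 1D compactness), whereas the paper's value-truncation device uses radiality only once, through pointwise decay, and is the form of the argument that is closer in spirit to the general (non-radial) rescaling proofs. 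Only cosmetic gaps remain in your write-up (e.g.\ justifying $f(r)\to0$ as $r\to\infty$ for $\mathcal{D}^{1,p}_{\mathrm{rad}}$ by density from $C_0^\infty$), none of which affects correctness.
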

\begin{proof}Let $u_{k}\in\mathcal{D}_{\mathrm{rad}}^{1,p}(\R^{N})$ and
assume that with any $j_{k}\in\Z$, $\quad\gamma^{\frac{N-p}{p}j_{k}}u_{k}(\gamma^{j_{k}}\cdot)\rightharpoonup0.$
Let  $p^{*}=\frac{pN}{N-p}$ and apply the limiting Sobolev inequality to $\chi_{j}(u)=\gamma^{j}\chi(\gamma^{-j}|u|)$,
where $\chi\in C_{0}^{\infty}((\frac{1}{\gamma},\gamma^{2}))$ satisfying $\chi(s)=s$
for $s\in[1,\gamma]$:
\[
\left(\int_{|u_{k}|\in(\gamma^{j},\gamma^{j+1})}|u_{k}|^{p^{*}}\mathrm{d}x\right)^{\frac{p}{p^{*}}}\le 
C\int_{|u_{k}|\in(\gamma^{j-1},\gamma^{j+2})}\left(|\nabla u_{k}|^{p}+\frac{|u_{k}|^{p}}{|x|^{p}}\right)\mathrm{d}x.
\]
Taking the sum over $j\in\Z$ and estimating the last integrand in the right hand side
by the Hardy inequality, we have
\[
\int_{\R^{N}}|u_{k}|^{p^{*}}\le C\int_{\R^{N}}|\nabla u_{k}|^{p}
\left(\sup_{j\in\Z}
\int_{|\gamma^{j}u_{k}(\gamma^{\frac{p}{N-p}j}x)|\in(1,\gamma)}|\gamma^{j}u_{k}(\gamma^{\frac{p}{N-p}j}x)|^{p^{*}}\mathrm{d}x
\right)^{1-\frac{p}{p^{*}}}.
\]
Replace the supremum in the right hand side by twice the value at a suitable sequence $j_k$.
The integrand in this term is a radial function, uniformly bounded in $L^1\cap L^\infty$, which, 
as a rescaled weakly vanishing sequence, converges to zero pointwise. Consequently, the left hand side converges to zero and 
cocompactness is proved.

To verify the local metrization, assume that $u_{k}\to0$ in $L^{p^{*}}(\R^{N})$ and that
the sequence is bounded in $\mathcal{D}^{1,p}$. Then, for any sequence
$g_{k}\in D$, the scaling invariance of the $L^{p^{*}}$-norm implies that
$g_{k}u_{k}\rightharpoonup0$ in $L^{p^{*}}$. Then, by density, $g_{k}u_{k}\rightharpoonup0$ in $\mathcal{D}^{1,p}$,
and thus $u_{k}\cw0$. 
\end{proof}
\subsection{Cocompactness relative to rescalings: the wavelet approach}
Imbeddings of Sobolev type are cocompact relative to the rescalings
group \eqref{eq:rescalings} for a large class of spaces. The first
results of this type were obtained for imbeddings of Riesz potential spaces $\dot{H}^{s,p}(\R^{N})$
into $L^{\frac{pN}{N-sp}}(\R^{N})$, $s>0$, $1<p<N$, 
G\'erard \cite{Gerard} (for Sobolev spaces with $p=2$) and Jaffard \cite{Jaffard} (for general $p$).
Here we summarize a major generalization of their analysis in Bahouri, Cohen and
Koch \cite{BCK}. 

Let $X\hookrightarrow Y$ be two Banach spaces of functions on $\R^{N}$, and assume that there 
exists an unconditional Schauder wavelet basis of wavelets $\Gamma\psi$, same for $X$ and $Y$, 
where
\begin{equation}
\Gamma=\lbrace u\mapsto g_{j,y}u=2^{rj}u(2^{j}\cdot-y)\rbrace_{j\in\Z,y\in\Z^{N}}\label{eq:BCK-D},
\end{equation}
and $r>0$ and $\psi\in X$ (``mother wavelet'') are fixed.
It's assumed that operators $u\mapsto u(\cdot-y)$, $y\in\R^{N}$,
and $u\mapsto t^{r}u(t\cdot)$, $t>0$, are
isometries in both $X$ and $Y$. For each $M\in\N$ and for every function $u\in X$, 
expanded in the basis $\Gamma\psi$ as $u=\sum_{g\in \Gamma}c(g)g\psi$, define a subset $\Gamma_{M}(u)\subset\Gamma$
of cardinality M which corresponds to the M largest values of $|c(g)|$,
and set 
\[
Q_{M}u=\sum_{g\in\Gamma_{M}(u)}c(g)g\psi.
\]
Note that such set $\Gamma_{M}$ always exists (and is not unique
when some $|c(g)|$ are equal, so one fixes it arbitrarily) due to
the fact that $\Gamma\psi$ is a Schauder basis for $X$, and thus 
for any $\eta>0$ only finitely many coefficients $c(g)$
have their absolute value larger than $\eta$. 

The main condition in \cite{BCK}, required for the imbedding $X\hookrightarrow Y$ (Assumption 1), is
\begin{equation}
\sup_{\|u\|_{X}\le1}\|Q_{M}u-u\|_{Y}\to0\text{ as }M\to\infty.\label{eq:Assumption1}
\end{equation}
\begin{rem}
When $X$ is reflexive, condition \eqref{eq:Assumption1} implies that
the imbedding $X\hookrightarrow Y$ is $D$-cocompact,  where $D$ is the rescaling group \eqref{eq:rescalings} with $\gamma=2$.
Indeed, assume that $h_{k}u_{k}\rightharpoonup$0 in $X$ for every rescaling sequence
$\{h_{k}\}\subset D$. Let $\Gamma'=\lbrace g^{-1}:\; g\in\Gamma\}\subset D$,
and note that $\Gamma'\Gamma\psi\subset\Gamma\psi$. 
Consider the wavelet expansion coefficients $c_{k}(g)$, $g\in\Gamma$,
of functions $u_{k}$, and let $g_{k}\in\Gamma$ be such that 
$|c_{k}(g_{k})|=\eta_{k}:=\max_{g\in\Gamma}|c_{k}(g)|=\max_{g\in\Gamma_{M}}|c_{k}(g)|$.
Then the coefficient $c'_{k}(\mathrm{id})$, corresponding to the
basis vector $\psi$ in the wavelet expansion for $g_{k}^{-1}u_{k}$,
is equal to $c(g_{k})$. At the same time $c'_{k}(\mathrm{id})\to0$
since since the coefficients of expansions
in a Schauder basis of a reflexive space are continuous
linear functionals and $g_{k}^{-1}u_{k}\rightharpoonup0$. 
Fix now $\epsilon>0$ and let $M=M(\epsilon)$ be such that $\|Q_{M}u_{k}-u_{k}\|_{Y}\le\epsilon$.
Then
\[
\|u_{k}\|_{Y}=\|Q_{M}u_{k}+(u_{k}-Q_{M}u_{k})\|_{Y}\le\|Q_{M}u_{k}\|_{Y}+\epsilon\le M(\epsilon)\eta_{k}+\epsilon.
\]
Let $k\to\infty$ and note that $\epsilon$ was arbitrary.
\par
Conversely, cocompactness of the imbedding relative to rescalings
easily implies \eqref{eq:Assumption1} if 
the space $X$ and its basis $\Gamma\psi$ satisfy the following condition: 
$Q_{M}u_{M}-u_{M} \rightharpoonup 0$ as $M\to\infty$ for any bounded sequence $\lbrace u_{M}\rbrace\subset X$.
Details are left to the reader.  
\end{rem}
For spaces brought up below, existence of an unconditional 
Schauder basis of rescaling wavelets is known, and the norms in their spaces have equivalent definitions in terms
of expansion coefficients in the wavelet basis (see \cite{Meyer24}). Using these characterizations, \cite{BCK} verifies condition
\eqref{eq:Assumption1}, and thus cocompactness of the imbeddings whenever $X$ is reflexive.  
Results of \cite{BCK} contain several earlier wavelet-based cocompactness results, in particular, \cite{Gerard, Jaffard, dVJP-9, Kyriasis19}. 
Some cases follow from the others via a simple transitivity argument: if $X\hookrightarrow Y$, $Y\hookrightarrow Z$ and one of the imbeddings
satisfies \ref{eq:Assumption1} (or is cocompact), then satisfies \ref{eq:Assumption1} (resp. is cocompact), 
or from the equivalence of different $Y$-convergences on bounded sets of $X$.
\begin{thm}
\label{thm:bck}
Let $p,q\in[1,\infty]$, $a,b\in(0,\infty]$, $s>t\ge0$, and $r=\frac{N}{p}-s>0$. 
The following imbeddings satisfy \eqref{eq:Assumption1}
(and are cocompact relative to rescalings whenever the domain is a reflexive space).\\
(i) \textbf{$\dot{B}{}_{p,p}^{s}(\R^N)\hookrightarrow L^{q}(\R^N)$}, \textbf{$q<\infty$,
$\frac{1}{p}-\frac{1}{q}=\frac{s}{N}>0$}.\\
(ii) \textbf{$\dot{B}{}_{p,p}^{s}(\R^N)\hookrightarrow\dot{B}_{q,q}^{t}(\R^N)$},\textbf{
$\frac{1}{p}-\frac{1}{q}=\frac{s-t}{N}>0$}.\\
(iii) \textbf{$\dot{B}{}_{p,q}^{s}(\R^N)\hookrightarrow\dot{F}_{q,b}^{t}(\R^N)$},\textbf{
$\frac{1}{p}-\frac{1}{q}=\frac{s-t}{N}>0$}.\\
(iv) \textbf{$\dot{B}{}_{p,a}^{s}(\R^N)\hookrightarrow\dot{B}_{q,b}^{t}(\R^N)$},\textbf{
$\frac{1}{p}-\frac{1}{q}=\frac{s-t}{N}>0$}, $a<b$ (no cocompactness
when $a=b$).\\
(v) $\dot{B}_{p,p}^{s}(\R^N)\hookrightarrow\mathrm{BMO}(\R^N)$, $s=\frac{N}{p}>0$.\\
(vi) \textbf{$\dot{B}{}_{p,a}^{s}(\R^N)\hookrightarrow L^{q,b}(\R^N)$},\textbf{ $\frac{1}{p}-\frac{1}{q}=\frac{s}{N}>0$},
$a<b$.\\
(vii) \textbf{$\dot{F}{}_{p,a}^{s}(\R^N)\hookrightarrow\dot{F}_{q,b}^{t}(\R^N)$},\textbf{
$\frac{1}{p}-\frac{1}{q}=\frac{s-t}{N}>0$}, $a,b>0$.
\end{thm}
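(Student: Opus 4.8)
The plan is to reduce the entire theorem to the wavelet-coefficient characterizations of the Besov, Triebel--Lizorkin, Lebesgue, Lorentz and BMO norms, and then to verify \eqref{eq:Assumption1} by a single pigeonhole-type estimate that is uniform over the unit ball of $X$. First I would recall, for each pair $(X,Y)$ appearing in (i)--(vii), the equivalent sequence-space norm on the wavelet coefficients $c(g)$, $g\in\Gamma$: for $\dot B^s_{p,a}$ one gets a mixed norm $\ell^a$ in the scale index $j$ of $\ell^p$-norms over the translates $y$, suitably weighted by $2^{(s-N/p)j}$; for $\dot F^s_{p,a}$ one gets an $L^p$-norm of an $\ell^a$-function-norm in $j$; for $L^q$ one uses the classical square-function/unconditional-basis characterization; for $L^{q,b}$ the analogous Lorentz refinement; and for $\mathrm{BMO}$ the Carleson-measure condition on the coefficients. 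All of these are standard and can be cited from \cite{Meyer24}. The homogeneity condition $r=\tfrac Np-s>0$ together with the scaling relations $\tfrac1p-\tfrac1q=\tfrac{s-t}N$ (or $=\tfrac sN$) guarantees that the common dilation weight $2^{rj}$ appears consistently, which is exactly what makes $\Gamma\psi$ simultaneously a basis for $X$ and $Y$ with isometric translation and dilation actions.

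Next I would carry out the key estimate. Fix $u\in X$ with $\|u\|_X\le 1$ and its coefficient sequence $(c(g))$. By the $X$-norm characterization, the nonincreasing rearrangement $c^*(m)$ of $(|c(g)|)$ satisfies a bound of the form $c^*(m)\lesssim m^{-\theta}$ for a suitable $\theta>0$ determined by the gap $\tfrac1p-\tfrac1q$ (this is the classical fact that membership in a Besov/Triebel--Lizorkin space with positive smoothness gap controls the decay of the rearranged coefficients; in the borderline Lorentz and BMO cases one uses the corresponding weak-type decay). Then $Q_Mu-u=\sum_{g\notin\Gamma_M(u)}c(g)g\psi$, and using the $Y$-norm characterization together with the unconditionality of the basis in $Y$, I would bound $\|Q_Mu-u\|_Y$ by a tail sum $\bigl(\sum_{m>M}(c^*(m))^{q}\cdots\bigr)^{1/q}$-type expression — more precisely, by the $Y$-sequence-norm of the truncated-from-below rearrangement. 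The decay $c^*(m)\lesssim m^{-\theta}$, with the strict inequality $s>t$ or $s/N=\tfrac1p-\tfrac1q>0$ ensuring that the relevant exponent is summable at infinity, then yields $\sup_{\|u\|_X\le1}\|Q_Mu-u\|_Y\to0$ as $M\to\infty$. In the cases (iv) and (vi) with $a<b$ the same argument works because passing from $\ell^a$ to $\ell^b$ in the fine index is precisely a weakening, whereas for $a=b$ the tail need not be uniformly small — which is why cocompactness fails there and the theorem excludes it. Finally, the parenthetical cocompactness conclusion follows immediately from the Remark: whenever the listed domain space $X$ is reflexive (i.e.\ all indices finite and, for the fine/second index, in $(1,\infty)$), \eqref{eq:Assumption1} implies $D$-cocompactness with $\gamma=2$.

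The main obstacle is the uniformity over the unit ball of $X$ in the decay estimate $c^*(m)\lesssim m^{-\theta}$, together with matching that decay to the correct summability threshold in the $Y$-norm. A single function in $X$ only gives $c^*(m)=o(m^{-\theta})$ along a subsequence, not a uniform rate; the uniform statement \eqref{eq:Assumption1} requires that the map $u\mapsto \|Q_Mu-u\|_Y$ be controlled by a modulus of continuity depending only on $M$. This is genuinely an interpolation/Lorentz-space fact: the sequence space underlying $\dot B^s_{p,a}$ embeds into the Lorentz sequence space $\ell^{q',b'}$ for the sharp $(q',b')$ dictated by the scaling identities, and the embedding $\ell^{q',b'}\hookrightarrow \ell^{q,b}$-type tail estimate is uniform precisely when the fine indices are ordered strictly, $b'<b$ (equivalently $a<b$ in (iv),(vi)). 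So the heart of the proof is the chain of sharp sequence-space embeddings; once those are in place (and they are classical, traceable to the Franke--Jawerth embeddings and their wavelet reformulation in \cite{BCK,Meyer24}), the cocompactness and metrization statements are, as the excerpt's model proof in Theorem~\ref{thm:shifta} and Proposition~\ref{prop:radial} illustrate, essentially bookkeeping. I would also note the transitivity shortcut flagged before Theorem~\ref{thm:bck}: several of (i)--(vii) reduce to one another — e.g.\ (i) follows from (ii) or (iii) composed with a trivial embedding, and (v) is the BMO endpoint of the Besov scale — so in practice one verifies the sharp sequence-space tail estimate in the two or three generic cases and propagates it.
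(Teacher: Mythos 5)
Your proposal follows essentially the same route as the paper, which presents this theorem as a result of Bahouri--Cohen--Koch: the paper likewise reduces everything to the wavelet-coefficient (sequence-space) characterizations of the norms from Meyer, has \cite{BCK} verify the uniform nonlinear-approximation estimate \eqref{eq:Assumption1} exactly via the kind of rearrangement/tail argument you sketch, obtains cocompactness from the Remark whenever $X$ is reflexive, and notes the same transitivity shortcuts between the cases. Your only wobble --- the worry that a single $u$ gives merely $c^*(m)=o(m^{-\theta})$ --- is unnecessary, since the Chebyshev-type bound on the unit ball of the coefficient space is already uniform, and you in any case recover uniformity correctly through the sharp Lorentz sequence-space embeddings.
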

Note that for $m\in\N$, the space $\dot{F}_{p,2}^{m}$ coincides
with the Sobolev space $\mathcal{D}^{m,p}(\mathbb{R}^{N})$, and $\dot{F}_{q,2}^{0}$
coincides with $L^{q}$, so the last imbedding includes the limiting
Sobolev imbeddings. 
Profile decompositions for these imbeddings, which we discuss in Section 3, are also given in \cite{BCK}.

\subsection{Cocompactness of trace imbeddings}
\begin{thm}
 Let $1<p<N$ and $\bar{p}=\frac{p(N-1)}{N-p}$. The following imbeddings are cocompact:\\
(i) $W^{1,p}(\R^{N})\hookrightarrow L^{q}(\R^{N-1})$ , $q\in(p,\bar{p})$,
   relative to the lattice shifts $\lbrace u\mapsto u(\cdot-y)\rbrace_{y\in\R^{N-1}}$.\\
  \item (ii) $\mathcal{D}^{1,p}(\R^{N})\hookrightarrow L^{\bar{p}}(\R^{N-1})$
    relative to the rescalings group \eqref{eq:rescalings}. 
\end{thm}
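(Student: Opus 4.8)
The plan is to follow the same ``partition into cells of compactness, then reassemble'' template that proved Theorem~\ref{thm:shifta} and Proposition~\ref{prop:radial}, now adapted to the trace setting where the target space lives on the hyperplane $\R^{N-1}=\{x_N=0\}$. For part (i), I would start from the trace inequality on a unit cube: writing $Q'=(0,1)^{N-1}$ and $Q=Q'\times(0,1)\subset\R^N$, the trace imbedding $W^{1,p}(Q)\hookrightarrow L^{\bar p}(Q')$ together with H\"older gives, for $p<q<\bar p$,
\[
\int_{Q'+y}|u_n|^q\,dx'\le \left(\int_{Q'+y}|u_n|^{\bar p}\,dx'\right)^{q/\bar p}
\le C\,\left(\int_{Q+y}(|\nabla u_n|^p+|u_n|^p)\,dx\right)^{\theta}
\]
for a suitable exponent $\theta$ depending on $p,q,N$; here I would arrange matters so that $\theta>1$, i.e. I pull out only a fraction of the full gradient energy on each cell and keep a leftover $L^{\bar p}(Q'+y)$-factor of positive power, exactly as in the proof of Theorem~\ref{thm:shifta}. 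Summing over $y\in\Z^{N-1}$ and bounding all but one factor of the gradient energy by the supremum over cells (replaced by twice the value at a worst sequence $y_n$), the right-hand side collapses to $\bigl(\int_{Q'+y_n}|u_n|^{\bar p}\bigr)^{\alpha}$ with $\alpha>0$, and this tends to zero by compactness of the subcritical trace imbedding $W^{1,p}(Q)\hookrightarrow L^{\bar p -\varepsilon}(Q')$ applied to the translated sequence $u_n(\cdot+(y_n,0))$, which converges weakly to zero in $W^{1,p}$ by hypothesis. The metrization claim is then read off from H\"older as in the remark following Definition~2.5. The one point needing care is the bookkeeping of exponents: one must check that the trace imbedding admits a splitting of the form above with a strictly superlinear power of the energy, which forces the restriction $q<\bar p$ (and $q>p$ so that there is a subcritical trace imbedding to invoke); I expect this exponent arithmetic, rather than any hard analysis, to be the main thing to get right.

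For part (ii), the critical trace exponent $\bar p$, I would reduce to part (i)'s mechanism by the same device used in Proposition~\ref{prop:radial} and in the dilation-invariant Sobolev case: the group is now the full rescaling group \eqref{eq:rescalings}, and one chops the target integral not by spatial cubes but along level sets / dyadic scales adapted to the dilation action, writing $\int_{\R^{N-1}}|u|^{\bar p}\,dx'=\sum_{j\in\Z}\int_{\{|u|\in(\gamma^j,\gamma^{j+1})\}\cap\R^{N-1}}|u|^{\bar p}\,dx'$ and applying the scale-invariant trace inequality $\|u\|_{L^{\bar p}(\R^{N-1})}\le C\|\nabla u\|_{L^p(\R^N)}$ to the truncations $\chi_j(u)$ exactly as in the proof of Proposition~\ref{prop:radial}. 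On each dyadic band the truncated function has gradient energy controlled by the gradient energy of $u_n$ on a comparable band, and pulling out one power of the full Dirichlet energy while keeping a positive power of the band's $L^{\bar p}(\R^{N-1})$-mass, then replacing the supremum over $j$ by twice its value at a worst sequence $j_n$, gives $\|u_n\|_{L^{\bar p}(\R^{N-1})}^{\bar p}\le C\|\nabla u_n\|_p^p\cdot o(1)$ once one knows the rescaled, truncated sequence $g_{j_n}u_n$ tends to zero in the relevant sense. That last step is where the $D$-weak convergence hypothesis enters: $g_k u_n\rightharpoonup0$ in $\mathcal D^{1,p}$ for every rescaling sequence, hence by the trace imbedding and local compactness the truncated traces vanish, and the sum reassembles to something $o(1)$.

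The main obstacle, as in Proposition~\ref{prop:radial}, is the non-radial, full-dilation case (ii): there the ``worst band'' functions are not automatically precompact the way radial bounded sequences are, so one cannot simply invoke pointwise convergence. The remedy is the one implicit in the references \cite{Solimini, Gerard, Jaffard, ccbook}: one must show that if the conclusion fails then, after extracting appropriate dilation and translation parameters $(j_n,y_n)$ and rescaling, one produces a nonzero weak limit $w\in\mathcal D^{1,p}(\R^N)$ with nonzero trace, contradicting $u_n\cw0$. Equivalently, one needs a trace-space analogue of the ``concentration function'' dichotomy: a bounded sequence in $\mathcal D^{1,p}$ whose traces do not vanish in $L^{\bar p}(\R^{N-1})$ must, along a subsequence and after a suitable rescaling in \eqref{eq:rescalings}, converge weakly to a function with nontrivial trace. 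Establishing this ``inverse'' statement — that nonvanishing of the critical trace norm forces a genuine profile — is the only step that requires real work; everything else is the exponent bookkeeping and the reassembly already illustrated in the preceding proofs, and for part (ii) one may alternatively cite transitivity, deducing it from a cocompact Besov trace imbedding in the spirit of Theorem~\ref{thm:bck} composed with the equivalence of $L^q$-convergences on bounded sets.
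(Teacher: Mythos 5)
For part (i) your strategy is the same as the paper's (the paper proves (i) by repeating the cell-partition argument of Theorem~\ref{thm:shifta} over the cubic lattice neighborhood of the hyperplane), but your exponent bookkeeping, as displayed, does not work and you have not fixed it: the correct splitting on a cell is $\int_{Q'+y}|u_n|^q=\|u_n\|_{L^q(Q'+y)}^p\,\|u_n\|_{L^q(Q'+y)}^{q-p}\le C\|u_n\|_{W^{1,p}(Q+y)}^p\bigl(\int_{Q'+y}|u_n|^q\bigr)^{1-p/q}$, so that after summation in $y\in\Z^{N-1}$ the leftover factor is the \emph{subcritical} $L^q(Q'+y_n)$-norm, which vanishes by compactness of the trace imbedding $W^{1,p}(Q)\hookrightarrow L^q(Q')$, $q<\bar p$, applied to the translated weakly null sequence. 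Your chain instead leaves a critical $L^{\bar p}(Q'+y_n)$-factor and asserts it tends to zero ``by compactness of the subcritical trace imbedding''; this is inconsistent, since the critical trace imbedding on a cube is not compact and the critical cell norm need not vanish under weak convergence (concentration at a boundary point). This is a correctable slip rather than a wrong approach, but as written the reassembly for (i) does not close.

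For part (ii) there is a genuine gap. The paper's proof is a citation: Lemma~5.10 of \cite{ccbook} (for $p=2$, extended routinely to $p>1$). You instead attempt a direct proof modeled on the level-set truncation of Proposition~\ref{prop:radial}, and you correctly observe that the mechanism which closes that proof --- pointwise convergence of rescaled \emph{radial} worst-band functions --- is unavailable here, since \eqref{eq:rescalings} contains translations and the sequences are not radial. You then defer the decisive step (``nonvanishing of the critical trace norm forces, after extracting dilation and translation parameters, a nonzero weak limit with nontrivial trace'') to future work; but that inverse statement \emph{is} the content of critical cocompactness, so your band decomposition only reduces the theorem to itself, and no proof of the key claim is supplied. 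Your fallback, deducing (ii) by transitivity from a cocompact Besov or Triebel--Lizorkin \emph{trace} imbedding in the spirit of Theorem~\ref{thm:bck}, is not available either: Theorem~\ref{thm:bck} concerns imbeddings between spaces of functions on $\R^N$, and the paper itself states that it is not aware of cocompactness results for trace imbeddings in the literature beyond the present theorem, so there is nothing of that kind to cite. As it stands, part (ii) is not proved.
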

\begin{proof}
The proof of (i) is repetitive of the argument in the paragraph 2.3.1 above,
employing the cubic lattice neighborhood of the hyperplane, instead of the cubic tessellation of the whole $\R^{N}$. 
The proof of (ii) for $p=2$ is given in Lemma~5.10 in \cite{ccbook}, and extends trivially
to general $p>1$. 
\end{proof}
Results in both cases easily extend to the case of hyperplanes of any dimension $d>p$. In both cases the corresponding 
$L^{p}$-norms give metrization of $D$-weak convergence. We are not aware of further results in literature
on cocompactness of trace imbeddings, but it is plausible that most trace imbeddings for Besov
and Triebel-Lizorkin spaces are similarly cocompact, and the wavelet argument of \cite{BCK} can be applied here as well.

\subsection{Cocompactness of Sobolev imbeddings on metric structures}
For the subcritical Sobolev imbeddings on manifolds, the actions of isometries
play the same role in cocompactness as parallel translations in the Euclidean case. 
Let $M$ be a complete Riemannian manifold and let $I(M)$ be its isometry group.
The following result deals with cocompactness of ``magnetic'' Sobolev spaces, corresponding in appropriate cases to the Schr\"odinger
operator with external magnetic field. 
Magnetic Sobolev space $W_{\alpha}^{1,p}(M)$, with a fixed $\alpha\in T^{*}M$, called magnetic potential,
is the space of functions $M\to\mathbb{C}$ with measurable weak derivatives,
characterized by the finite norm
\[
\|u\|_{1,p}^{p}=\int_{M}(|du+iu\alpha|^{p}+|u|^{p}).
\]
The usual Sobolev space corresponds to the form $\alpha$ being exact (i.e. zero modulo gauge transformation).
By the diamagnetic inequality $|du+i\alpha u|\ge|d|u||$, so one always has
$W_{\alpha}^{1,p}(M)\hookrightarrow W^{1,p}(M)$. 
When $\alpha$ is not exact, assume in addition that $M$ is simply
connected. Then for each diffeomorphism $\eta:\, M\to M$,
such that $d(\alpha\circ\eta)=d\alpha$, there exists a unique real-valued
function $\varphi_{\eta}\in C^{\infty}(M)$ satisfying $d\varphi=\alpha\circ\eta-\alpha$.
An elementary computation shows that if $\eta\in I(M)$, then
$W_{\alpha}^{1,p}(M)$-norm is preserved by the operators $u\mapsto e^{i\varphi_{\eta}}u\circ\eta$, 
called {\em magnetic shifts} (see \cite{ArioliSz}). In physics, the meaning of the relation
$d\alpha\circ\eta=d\alpha$ is that the magnetic field $d\alpha$ is periodic under isometries of
$M$. 
\begin{thm}
 Let $M$ be a complete smooth non-compact Riemannian $N$-manifold, cocompact (periodic) relative to $I(M)$, and 
 simply connected whenever $\alpha\in T^*M$ is not exact.  
Let $G$ be a closed subgroup of $I(M)$ and let $Q\subset M$ be a bounded set such that $GQ=M$. Define
\begin{equation}
 \label{DG}
D_G=\lbrace u\mapsto e^{i\varphi_{\eta}}u\circ\eta,\;\eta\in G\rbrace.
\end{equation}
Then the imbedding $W_{\alpha}^{1,p}(M)\hookrightarrow L^{q}(M)$,
$1<p\le q\le p^{*}=\frac{pN}{N-p}$, $N>p$, is cocompact relative to the set
of operators $D_{G}$. Moreover, the $L^{q}$-norm provides metrization of the $D$-weak convergence
on bounded subsets of $W_\alpha^{1,p}$. 
\end{thm}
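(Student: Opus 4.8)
The plan is to mimic the ``cells of compactness'' template used for Theorem~\ref{thm:shifta}, with the cubic tessellation of $\R^N$ replaced by a $G$-covering of $M$ by translates of the bounded set $Q$. First I would fix a relatively compact open set $U\supset\overline{Q}$ and, since $GQ=M$ and $G$ acts by isometries, extract a countable subfamily $(\eta_i)\subset G$ such that $\{\eta_iQ\}_{i\in\N}$ covers $M$ and $\{\eta_iU\}_{i\in\N}$ has bounded overlap (finite multiplicity $\kappa$); such a family exists because $M$ has bounded geometry (being periodic and noncompact) and $Q$ is bounded. On each $\eta_iU$ I would apply the subcritical Sobolev/Rellich inequality for the \emph{ordinary} Sobolev space $W^{1,p}(U)$ — legitimate because of the diamagnetic inequality $|du+i\alpha u|\ge |d|u||$, which gives $\||u|\|_{W^{1,p}(\eta_iU)}\le C\|u\|_{W^{1,p}_\alpha(\eta_iU)}$ with $C$ independent of $i$ by isometry-invariance of both norms — in the interpolated form
\[
\int_{\eta_iQ}|u|^{q}
\le C\Bigl(\int_{\eta_iU}\bigl(|du+i\alpha u|^{p}+|u|^{p}\bigr)\Bigr)
\Bigl(\int_{\eta_iU}|u|^{q}\Bigr)^{1-p/q},
\]
valid for $p\le q<p^{*}$ (the case $q=p^{*}$ being excluded, as for the ordinary critical exponent, and the endpoint $q=p$ handled separately or by noting the statement's range is subcritical here).

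Next I would sum over $i\in\N$. On the left the cells $\eta_iQ$ cover $M$, so the sum dominates $\int_M|u_k|^q$; on the right the cells $\eta_iU$ overlap with multiplicity at most $\kappa$, so the summed first factors are bounded by $\kappa\|u_k\|_{W^{1,p}_\alpha(M)}^p$, which is bounded by hypothesis. Estimating the last factor by its supremum over $i$ and replacing that supremum by twice the value at a worst index $i_k$, I obtain
\[
\int_M|u_k|^{q}\le C'\Bigl(\int_{\eta_{i_k}U}|u_k|^{q}\Bigr)^{1-p/q}.
\]
It remains to show the right-hand side tends to zero. Set $v_k=e^{i\varphi_{\eta_{i_k}}}u_k\circ\eta_{i_k}=g_ku_k$ with $g_k\in D_G$; then $\int_{\eta_{i_k}U}|u_k|^q=\int_{\eta_{i_k}^{-1}(\eta_{i_k}U)}|v_k|^q$. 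The sets $\eta_{i_k}^{-1}\eta_{i_k}U=U$ are all the same fixed relatively compact set. By the cocompactness hypothesis $u_k\cw0$, so $g_ku_k=v_k\rightharpoonup 0$ in $W^{1,p}_\alpha(M)$; by the diamagnetic inequality and reflexivity $|v_k|\rightharpoonup0$ in $W^{1,p}_{\mathrm{loc}}$, hence by the compactness of the imbedding $W^{1,p}(U)\hookrightarrow L^q(U)$ for $q<p^{*}$ we get $\int_U|v_k|^q\to0$. This forces $\int_M|u_k|^q\to0$, which is cocompactness.

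The main obstacle I anticipate is not the analytic inequality but the bookkeeping around the magnetic gauge: one must check that the localized Sobolev constant is genuinely uniform in $i$, and for that it is essential that the magnetic shifts $u\mapsto e^{i\varphi_\eta}u\circ\eta$ are isometries of $W^{1,p}_\alpha(M)$ (established in the excerpt via $d\varphi_\eta=\alpha\circ\eta-\alpha$ and the periodicity $d\alpha\circ\eta=d\alpha$), so that translating cell $\eta_iU$ back to a fixed fundamental region carries the $W^{1,p}_\alpha$-norm isometrically; combined with the diamagnetic inequality on the fixed region this yields a single constant $C$. A secondary subtlety is that $G$ is only a \emph{closed subgroup} of $I(M)$ with $GQ=M$, so producing a countable, finite-multiplicity subcover requires a Vitali-type selection using the bounded geometry of $M$; I would either invoke this directly or first reduce to a countable dense subgroup. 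Finally, for the metrization claim I would argue exactly as in Proposition~\ref{prop:radial}: if $u_k\to0$ in $L^q(M)$ with $\{u_k\}$ bounded in $W^{1,p}_\alpha$, then for any $g_k\in D_G$ the $L^q$-isometry of the magnetic shifts gives $g_ku_k\to0$ in $L^q$, hence $g_ku_k\rightharpoonup0$ in $L^q$ and, by the bound in $W^{1,p}_\alpha$ together with density and reflexivity, $g_ku_k\rightharpoonup0$ in $W^{1,p}_\alpha$, i.e.\ $u_k\cw0$; the converse is the cocompactness just proved.
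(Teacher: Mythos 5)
Your proposal is correct in outline, and it is essentially the ``cells of compactness'' argument that the paper's proof ultimately rests on: the paper describes its source (Lemma~9.4 of \cite{ccbook}, stated for $p=2$) as the analogue of Theorem~\ref{thm:shifta} built on a uniformly finite-multiplicity covering $\{\eta V\}_{\eta\in G'}$. The difference is one of factorization. The paper first takes the non-magnetic cocompactness of $W^{1,p}(M)\hookrightarrow L^{q}(M)$ as known and then disposes of the magnetic phase in one short reduction: if $e^{i\varphi_{\eta_k}}u_k\circ\eta_k\rightharpoonup0$ in $W^{1,p}_\alpha$, then $|u_k\circ\eta_k|$ is bounded in $W^{1,p}$ (diamagnetic inequality) and tends to zero a.e., so the plain isometric shifts of $|u_k|$ are weakly null and the non-magnetic result applies. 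You instead fold the diamagnetic inequality into each cell and run the Lieb-type summation directly in $W^{1,p}_\alpha$; both routes use exactly the same two ingredients (finite-multiplicity covering by isometric translates of a bounded set, and the diamagnetic inequality), with the paper's version shorter because it reuses the known $\alpha=0$ case and yours self-contained, including the metrization argument which the paper only cites. Two points to tighten. First, the step ``$|v_k|\rightharpoonup0$ in $W^{1,p}_{\mathrm{loc}}$ by the diamagnetic inequality and reflexivity'' does not by itself identify the weak limit as zero: phase oscillation could leave $|v_k|$ large while $v_k\rightharpoonup0$. You need local strong convergence of $v_k$ itself, which you get because $\alpha$ is locally bounded, so $\|dv_k\|_{L^p(U)}\le\|dv_k+iv_k\alpha\|_{L^p(U)}+C_U\|v_k\|_{L^p(U)}$, hence $v_k$ is bounded in the ordinary $W^{1,p}(U;\C)$ and Rellich gives $v_k\to0$ in $L^p(U)$ and a.e.\ on a subsequence; this is the same implicit step the paper uses when it asserts a.e.\ convergence of $|u_k\circ\eta_k|$. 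Second, your exclusion of the endpoint cases is consistent with the paper's own proof, which only concludes $|u_k|\to0$ in $L^q$ for $q\in(p,p^{*})$ despite the statement's nominal range $p\le q\le p^{*}$; cocompactness relative to $D_G$ genuinely fails at both endpoints (spreading sequences at $q=p$, shrinking bubbles at $q=p^{*}$), so no additional argument should be attempted there.
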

\begin{proof}
The proof of cocompactness and verification of metrization for $p=2$ is given in \cite{ccbook}, Lemma~9.4. The proof for general $p$ is completely analogous. 
The argument is similar to that for Theorem~\ref{thm:shifta}, and involves existence of a covering for $M$ of 
uniformly finite multiplicity by sets $\lbrace\eta V\rbrace_{\eta\in G'}$, with some set $G'\subset G$ and some open set $V\subset M$.
We give here an argument for reduction of the general (magnetic) case to the case $\alpha=0$.
\par
If $e^{i\varphi_{\eta_{k}}}u_{k}\circ\eta_{k}\rightharpoonup0$
in $W_{\alpha}^{1,p}(M)$, then $|u_{k}\circ\eta_{k}|$ converges
to zero a.e. and, by the diamagnetic inequality, is bounded in $W^{1,p}(M)$.
Then, from cocompactness of the ``non-magnetic'' imbedding $W^{1,p}(M)\hookrightarrow L^{q}(M)$, follows
$|u_{k}|\to0$ in $L^{q}(M)$, $q\in(p,p^{*})$.  
\end{proof}
Cocompactness of subcritical imbeddings relative to isometries extends to Sobolev spaces on metric structures 
other than Riemannian manifolds, in particular, to the Sobolev spaces of the Kohn
Laplacian  on Carnot groups (\cite{LieGroups}) and of blowups of a self-similar
fractal of a class involving Sierpinski gasket, as in \cite{Frac};
see also the paper \cite{BiSchiTi} in the setting of axiomatic Sobolev
spaces on general metric structures.
\par
Let now $M$ be a simply connected nilpotent stratified Lie group (Carnot group) with
a stratification $T_{e}M=\bigoplus_{j=1}^{m}Y_{j}$ . Let $\nu=\sum_{j=1}^{m}j\dim Y_{j}\ge3$.
One calls the diffeomorphism $T_{s}=\exp_{e}\tau_{s}\exp_{e}^{-1}$
of $M$ an \emph{anisotropic dilation} if $\tau_{s}$ is given by
$\tau_{s}|_{Y_{j}}=s^{j}$, $j=1,\dots,m$. Let $v_{1},\dots v_{\dim Y_{1}}\in T_{e}M$
be an orthonormal basis in $Y_{1}$ and consider the subelliptic Sobolev
space $\dot{H}^{1}(M)$, characterized by the norm 
\[
\|u\|^{2}=\int_{M}\left|\sum_{i=1}^{\dim Y_{1}}|\langle du,v_{i}\rangle|^{2}\right|.
\]
In particular, for the Heisenberg group $\mathbb{H}_{N}$, identified
as $\mathbb{R}^{N}\times\R^{N}\times\R$ with the group law 
\[
(x,y,t)(x',y',t')=(x+x',y+y',t+t'+2x\cdot y'-2x'\cdot y),
\]
the first stratum is $Y_{1}=\mathbb{R}^{N}\times\R^{N}$, the effective
dimension $\nu$ equals $2N+2$, the anisotropic dilations are
$T_{s}(x,y,t)=(sx,sy,s^{2}t)$, and the Sobolev norm is given by
\[
\|u\|^{2}=\int_{\R^{2N+1}}\left(\sum_{i=1}^{N}|(\partial_{x_{i}}+2y_{i}\partial_{t})u|^{2}+\sum_{i=1}^{N}|(\partial_{y_{i}}-2x_{i}\partial_{t})u|^{2}\right)dxdydt.
\]
The space $\dot H^{1}(M)\hookrightarrow L^{\frac{2\nu}{\nu-2}}(M)$ is
continuously imbedded into $H^{1}(M)$. For further details on
subelliptic Sobolev spaces we refer to (\cite{Folland}, \cite{FollandStein}, as well a brief exposition in \cite{ccbook}, Chapter 9.
\begin{thm}
Let $\dot H^{1}(M)$ be the subelliptic Sobolev space on the  Carnot group $M$ as above, let $\gamma>1$
and let 
\[
D=\lbrace u\mapsto\gamma^{\frac{\nu-2}{2}j}u\circ\eta\circ T_{\gamma^{j}},\;\eta\in M,\, j\in\Z\rbrace.
\]
Then the imbedding ${\dot{H^{1}}(M)\hookrightarrow L^{\frac{2\nu}{\nu-2}}(M)}$
is cocompact relative to $D$. Furthermore, the $L^{\frac{2\nu}{\nu-2}}-$norm
provides a metrization of $D$-weak convergence on bounded subsets
of $\mathcal{D}^{\ell,p}(\mathbb{R}^{N})$. 
\end{thm}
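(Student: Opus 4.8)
The plan is to mimic the proof of Proposition~\ref{prop:radial}, replacing the Euclidean radial structure by the sub-Riemannian structure of the Carnot group $M$, and the role of the homogeneous dilation $x\mapsto\gamma^{\frac{p}{N-p}j}x$ by the anisotropic dilation $T_{\gamma^j}$. The anisotropic dilations act on $\dot H^1(M)$ isometrically (after the prefactor $\gamma^{\frac{\nu-2}{2}j}$, which is exactly the homogeneity of the subelliptic Dirichlet form with respect to the homogeneous dimension $\nu$), and left translations $\eta\in M$ act isometrically as well; so $D$ is a group of surjective isometries of both $\dot H^1(M)$ and $L^{\frac{2\nu}{\nu-2}}(M)$, and the setup of the cocompactness definition applies.

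First I would decompose the $L^{\frac{2\nu}{\nu-2}}$-norm into ``level shells'' $\{x: |u_k(x)|\in(\gamma^j,\gamma^{j+1})\}$, $j\in\Z$, exactly as in Proposition~\ref{prop:radial}: writing $\chi_j(u)=\gamma^{j}\chi(\gamma^{-j}|u|)$ with $\chi\in C_0^\infty((\gamma^{-1},\gamma^{2}))$ equal to the identity on $[1,\gamma]$, I apply the subelliptic Sobolev inequality $\|v\|_{L^{\frac{2\nu}{\nu-2}}(M)}^2\le C\|v\|_{\dot H^1(M)}^2$ to $v=\chi_j(u_k)$. The chain-rule estimate for the horizontal gradient of $\chi_j(u_k)$ gives $\sum_i|\langle d\chi_j(u_k),v_i\rangle|^2\le C\,\mathbf 1_{\{|u_k|\in(\gamma^{j-1},\gamma^{j+2})\}}\sum_i|\langle du_k,v_i\rangle|^2$ (the factor $\gamma^{-j}$ from $\chi'$ cancels the $\gamma^j$ prefactor after squaring and rescaling the cutoff argument), so summing over $j$ and using bounded overlap of the shells yields
\[
\int_M|u_k|^{\frac{2\nu}{\nu-2}}\le C\,\|u_k\|_{\dot H^1}^{2}\Bigl(\sup_{j\in\Z}\int_{|u_k|\in(\gamma^j,\gamma^{j+1})}|u_k|^{\frac{2\nu}{\nu-2}}\,dx\Bigr)^{\frac{2}{\nu-2}}.
\]
Then I replace the supremum by twice its value at a maximizing sequence $j_k$, and rewrite the maximizing integral in $D$-invariant form using the dilation $T_{\gamma^{j_k}}$ together with a left translation $\eta_k\in M$ chosen so that the shell $\{|u_k|\in(\gamma^{j_k},\gamma^{j_k+1})\}$ is pulled back into a fixed bounded set — here one uses that $M=GQ$ would not be literally needed since $G=M$ acts transitively, so one can always recenter; this produces a sequence $w_k:=\gamma^{\frac{\nu-2}{2}j_k}u_k\circ\eta_k\circ T_{\gamma^{j_k}}\in D u_k$ that is bounded in $\dot H^1(M)$ and supported, on the relevant shell, in a fixed bounded set. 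The $D$-weak-null hypothesis gives $w_k\rightharpoonup 0$ in $\dot H^1(M)$, hence (by local compactness of the subelliptic Sobolev imbedding on bounded sets, i.e. the Rellich–Kondrachov theorem for the Kohn Laplacian) $w_k\to 0$ in $L^{\frac{2\nu}{\nu-2}}_{\mathrm{loc}}$, so the maximizing integral tends to $0$ and the displayed right-hand side vanishes.

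The main obstacle is the geometric bookkeeping in the recentering step: one must verify that for each $k$ there is a group element whose action carries the level shell $\{|u_k|\in(\gamma^{j_k},\gamma^{j_k+1})\}$ into a single fixed bounded set $Q'\subset M$, uniformly in $k$, and that the rescaled sequence lands in a single relatively compact set on which Rellich applies; this requires knowing that the shells, after the correct anisotropic rescaling, have uniformly bounded diameter in the Carnot–Carathéodory metric, which in turn uses the Sobolev/Poincaré inequality on the shell to bound its measure and hence (for connected pieces) its CC-diameter, together with a covering of $M$ by translates of a fixed ball of uniformly finite multiplicity. Once this is in place, the remaining steps are routine. For the metrization claim one argues exactly as at the end of Proposition~\ref{prop:radial}: if $u_k\to0$ in $L^{\frac{2\nu}{\nu-2}}(M)$ and $u_k$ is bounded in $\dot H^1(M)$, then scaling invariance of the $L^{\frac{2\nu}{\nu-2}}$-norm under $D$ gives $g_ku_k\rightharpoonup0$ in $L^{\frac{2\nu}{\nu-2}}$ for every $g_k\in D$, and by density of $\dot H^1(M)$-duality pairings this upgrades to $g_ku_k\rightharpoonup0$ in $\dot H^1(M)$, i.e. $u_k\cw0$; the converse is cocompactness.
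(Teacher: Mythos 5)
The paper does not actually prove this theorem in the text: it is quoted with a citation to Lemma~9.14 of \cite{ccbook} (for $p=2$), whose argument is of the ``cells of compactness'' type as in Theorem~\ref{thm:shifta}, i.e.\ a decomposition over \emph{spatial} cells $\eta Q$, $\eta$ ranging over a cocompact subset of the group, combined with the discrete dilation scales, and a supremum taken over both the translation and the scale parameter. Your proposal instead tries to transplant the proof of Proposition~\ref{prop:radial}, which decomposes over \emph{level shells} $\{|u_k|\in(\gamma^j,\gamma^{j+1})\}$ and takes a supremum over the scale $j$ only. That argument works in Proposition~\ref{prop:radial} precisely because of radial symmetry: a bounded radial function in $\mathcal D^{1,p}_{\mathrm{rad}}$ obeys a pointwise decay estimate, so after the correct rescaling the relevant level shell sits in a fixed annulus and weak convergence upgrades to pointwise vanishing there --- no translations are needed at all.

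This is where your proof has a genuine gap. On a Carnot group, with no radiality available, the level shell $\{|u_k|\in(\gamma^{j_k},\gamma^{j_k+1})\}$ need not be carried into any fixed bounded set by a \emph{single} left translation $\eta_k$: think of $u_k$ equal to a sum of $K$ copies of a fixed bump centered at points whose mutual CC-distances tend to infinity; the shell then has unbounded diameter for every rescaling, yet such sequences are exactly the ones cocompactness must handle (they are $D$-weakly null only after subtracting profiles, but a priori they are admissible inputs to your inequality). Your proposed fix --- bounding the measure of the shell by Sobolev/Poincar\'e and deducing a CC-diameter bound ``for connected pieces'' --- does not close the gap: bounded measure does not bound diameter, and level sets are in general highly disconnected, so no recentering by one group element exists. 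To repair the argument you would have to decompose each shell further into spatial cells (translates of a fixed bounded set $Q$ with $GQ=M$ and uniformly finite multiplicity) and take the supremum over the \emph{pair} (translation, scale), replacing it by twice its value at a worst sequence $(\eta_k,j_k)\in D$; this is essentially the structure of the cited proof, and at that point the level-shell layer of your argument becomes superfluous. The chain-rule estimate for the horizontal gradient and the final metrization argument (scaling invariance of the $L^{\frac{2\nu}{\nu-2}}$-norm plus density, exactly as at the end of Proposition~\ref{prop:radial}) are fine as stated.
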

This result is proved in \cite{ccbook}, Lemma~9.14, for $p=2$ (extension to $p\neq 2$ is elementary). 
Earlier results can be found in \cite{LieGroups} for the subcritical case and the general Carnot group, 
and in \cite{BenAmeur} for the case of Heisenberg group). Like in the Euclidean case, many applications
to subelliptic PDE on Lie groups could be handled with the help of the Willem-Chabrowski version of concentration compactness (e.g.
in papers of Garofalo et al) or with Struwe's ``global compactness`` (\cite{Struwe84}, see Theorem~3.1 in the book \cite{DruHeRo} of Hebey, Druet and
Robert), which is a realization of a possible
more general profile decomposition for manifolds, for the case of Palais-Smale sequences with dilations expressed via the exponential map.
\subsection{Cocompactness of the Moser-Trudinger imbedding}
The counterpart of Sobolev imbedding of $\mathcal{D}^{1,p}(\R^{N})$
in the borderline case  $p=N$, is the imbedding defined by the Moser-Trudinger inequality (Yudovich, \cite{Yudovich}, 
independently rediscovered by Pohozhaev, Peetre and Trudinger, and with the optimal exponent proved by Moser \cite{Moser}).
Moser-Trudinger inequality is stated for bounded domains and it is false for $\R^N$. There reason for that is that the gradient norm $\|\nabla u\|_N$
on $C_0(\R^ N)$ does not dominate any linear functional $\langle\varphi,u\rangle$ with  $\varphi\in\mathcal D'(\R^N)\setminus\{0\}$, 
i.e. the space $\mathcal D^{1,N}(\R^N)$ defined as a formal completion of $C_0^\infty(\R^N)$ in the gradient norm, 
is not continuously imbedded even into the space of distributions. 
Another way to express this is that there exists a Cauchy sequence representing zero of the completion, which converges to 
$1$ uniformly on every compact set. A counterpart of the Moser-Trudinger inequality for $\R^N$, proved by Li and Ruf \cite{Ruf}, 
involves the full Sobolev norm rather than the gradient norm, and it also avoids lower powers of $u$ 
which have poor integrability at infinity. In what follows we will use the Sobolev norm $\|u\|_{1,N}$ equal to the
standard Sobolev norm if the domain $\Omega\subset\R^{N}$ is unbounded and to the 
equivalent norm of $W_0^{1,N}(\Omega)$, $\|\nabla u\|_{N}$, if $\Omega$ is bounded. 
The inequality that expresses both the Moser-Trudinger inequality and the Li-Ruf inequality, is
\[
\sup_{u\in W_{0}^{1,N}(\Omega),\|u\|_{1,N}\le1}\int_{\Omega}\exp_{N}(\alpha_{N}|u|^{N'})\mathrm{d}x<\infty,
\]
where $\alpha_N=N\omega_{N-1}^{1/(n-1)}$, $\omega_{N-1}$ is the measure of the unit sphere in $\R^N$ , $N'=\frac{N}{N-1}$ and
$\exp_{N}(t)=e^{t}-\sum_{j=0}^{N-2}\frac{t^{j}}{j!}$. For bounded
domains one may equivalently use $e^{t}$ instead of $\exp_{N}(t)$, since the subtracted polynomial
also has a bounded integral.

The imbedding expressed by the Moser-Trudinger inequality is $W^{1,N}_0(\Omega)\hookrightarrow \exp_N L^{N'}$, where 
$\exp_{N}L^{N'}$ is the Orlicz space space associated with the convex function $\exp_{N}(t^{N'})$.

In the spaces of radial functions cocompactness of imbeddings is established
in the cases when $\Omega$ is a disk (without loss of generality
we consider here the unit disk $B$) or $\R^{N}$ (if $\Omega$ is
an annulus or an exterior disk, the imbedding is compact for elementary
reasons). The result below is due to \cite{ATdO}:
\begin{thm}
The imbedding $W_{0,\mathrm{rad}}^{1,N}(B)\hookrightarrow\exp L^{N'}(B)$
is cocompact relative to the group
\[
D=\lbrace u\mapsto s^{1-1/N}u(r^{s})\rbrace_{s>0}.
\]
Furthermore, a local metrization of the $D$- weak convergence is provided
by the norm $\sup_{0<r<1}\frac{|u(r)|}{(\log\frac{1}{r})^{1/N'}}$.
\end{thm}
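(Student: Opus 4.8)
The plan is to transfer the problem to the half‑line via the logarithmic substitution. With $r=|x|$ and $t=\log\tfrac1r$, the correspondence $u\mapsto v$, $v(t)=u(e^{-t})$, is, up to the constant $\omega_{N-1}^{1/N}$, an isometric isomorphism of $W^{1,N}_{0,\mathrm{rad}}(B)$, normed by $\|\nabla u\|_{N}$, onto $V:=\{v\in AC_{\mathrm{loc}}([0,\infty)):v(0)=0,\ v'\in L^{N}(0,\infty)\}$, normed by $\|v'\|_{L^{N}(0,\infty)}$, since $\|\nabla u\|_{N}^{N}=\omega_{N-1}\int_{0}^{\infty}|v'|^{N}\,dt$. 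The same change of variables carries $\int_{B}\exp_{N}(\alpha_{N}|u|^{N'})\,dx$ to $\omega_{N-1}\int_{0}^{\infty}\exp_{N}(\alpha_{N}|v(t)|^{N'})\,e^{-Nt}\,dt$ and the competitor seminorm $\sup_{0<r<1}|u(r)|\,(\log\tfrac1r)^{-1/N'}$ to $[v]:=\sup_{t>0}|v(t)|\,t^{-1/N'}$, and it conjugates $D$ to the one‑parameter group of dilations $\delta_{s}v(t)=s^{-1/N'}v(st)$, $s>0$; each $\delta_{s}$ is an isometry of $V$ and leaves $[\,\cdot\,]$ invariant. It therefore suffices to argue on $V$ with the group $\{\delta_{s}\}_{s>0}$.

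Two elementary facts will be used. (i) By Hölder, $|v(t)|=\bigl|\int_{0}^{t}v'\bigr|\le t^{1/N'}\|v'\|_{L^{N}(0,t)}$, whence $[v]\le\|v'\|_{L^{N}(0,\infty)}<\infty$: the seminorm $[\,\cdot\,]$ is continuous on $V$ and weaker than its norm. (ii) Since $v\mapsto v'$ identifies $V$ isometrically with the reflexive space $L^{N}(0,\infty)$, a relation $w_{k}\rightharpoonup0$ in $V$ is the same as $w_{k}'\rightharpoonup0$ in $L^{N}$, and testing $w_{k}'$ against $\mathbf 1_{(0,t)}\in L^{N'}$ gives $w_{k}(t)=\int_{0}^{t}w_{k}'\to0$ for each fixed $t>0$. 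Now assume $u_{k}\cw0$ and put $\varepsilon_{k}:=[v_{k}]$. I claim $\varepsilon_{k}\to0$: otherwise, along a subsequence $\varepsilon_{k}\ge2\delta>0$, so one may pick $t_{k}>0$ with $|v_{k}(t_{k})|\,t_{k}^{-1/N'}\ge\delta$; then $w_{k}:=\delta_{t_{k}}v_{k}$, being of the form $g_{k}v_{k}$ with $g_{k}=\delta_{t_{k}}\in D$, satisfies $w_{k}\rightharpoonup0$ in $V$ by hypothesis, hence $w_{k}(1)\to0$ by (ii), contradicting $|w_{k}(1)|=|v_{k}(t_{k})|\,t_{k}^{-1/N'}\ge\delta$. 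Therefore $|v_{k}(t)|\le\varepsilon_{k}\,t^{1/N'}$ with $\varepsilon_{k}\to0$, and, $\exp_{N}$ being nondecreasing on $[0,\infty)$,
\[
\int_{0}^{\infty}\exp_{N}\bigl(\alpha_{N}|v_{k}(t)|^{N'}/\lambda^{N'}\bigr)\,e^{-Nt}\,dt\ \le\ \int_{0}^{\infty}\exp_{N}\bigl(\alpha_{N}\varepsilon_{k}^{N'}t/\lambda^{N'}\bigr)\,e^{-Nt}\,dt\qquad(\lambda>0).
\]
Splitting $(0,\infty)=(0,T)\cup(T,\infty)$ and using, once $k$ is large enough that $\alpha_{N}\varepsilon_{k}^{N'}T\le\lambda^{N'}$ and $\alpha_{N}\varepsilon_{k}^{N'}<\tfrac{N}{2}\lambda^{N'}$, the bounds $\exp_{N}(\sigma)\le e\,\sigma^{N-1}$ for $0\le\sigma\le1$ on the first interval and $\exp_{N}(\sigma)\le e^{\sigma}$ on the second, the right‑hand side is at most $C_{N}T^{N}\bigl(\alpha_{N}\varepsilon_{k}^{N'}/\lambda^{N'}\bigr)^{N-1}+\tfrac{2}{N}e^{-NT/2}$, which becomes arbitrarily small on choosing $T$ large and then $k$ large. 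Hence $\int_{B}\exp_{N}(\alpha_{N}|u_{k}|^{N'}/\lambda^{N'})\,dx\to0$ for every $\lambda>0$, which is exactly $\|u_{k}\|_{\exp_{N}L^{N'}(B)}\to0$; this is the asserted $D$‑cocompactness.

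For the metrization, the implication $u_{k}\cw0\Rightarrow[u_{k}]\to0$ is precisely the claim $\varepsilon_{k}\to0$ proved above. Conversely, let $(u_{k})$ be bounded in $W^{1,N}$ with $[u_{k}]\to0$ and fix any sequence $(g_{k})\subset D$. By the $D$‑invariance of $[\,\cdot\,]$ we get $[g_{k}u_{k}]=[u_{k}]\to0$, and, each $g_{k}$ being an isometry of $W^{1,N}$, $(g_{k}u_{k})$ stays bounded; passing to $V$, $|(g_{k}v_{k})(t)|\le[g_{k}v_{k}]\,t^{1/N'}\to0$ for each fixed $t$, while $(g_{k}v_{k})'$ is bounded in $L^{N}$. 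A weak‑$L^{N}$ subsequential limit $\psi$ of $(g_{k}v_{k})'$ then satisfies $\int_{0}^{t}\psi=\lim(g_{k}v_{k})(t)=0$ for all $t$, so $\psi=0$; by the subsequence principle $(g_{k}v_{k})'\rightharpoonup0$ in $L^{N}$, that is, $g_{k}u_{k}\rightharpoonup0$ in $W^{1,N}$. Since $(g_{k})$ was arbitrary, $u_{k}\cw0$, and the $\sup$‑seminorm indeed metrizes $\cw$ on bounded subsets of $W^{1,N}_{0,\mathrm{rad}}(B)$.

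The analytic input here is slight: after the logarithmic reduction, cocompactness rests only on the Hölder bound $|v(t)|\le t^{1/N'}\|v'\|_{L^{N}}$ and on the fact that a single element of the deflation group $D$ normalizes any prescribed scale $t_{k}$ to $t=1$, where $D$‑weak convergence kills the pointwise value of the function — the sharp Moser--Trudinger inequality is not invoked in the convergence step, only the elementary integral estimate above. The step that genuinely requires care is the bookkeeping of the reduction itself: that $u\mapsto v$ is an isometry, that it conjugates $D$ exactly to the dilations $\delta_{s}$, and that it sends the Orlicz target norm and the competitor seminorm to the stated objects on the half‑line; to this one must add the routine Orlicz‑space remark that $\int_{B}\exp_{N}(\alpha_{N}|u_{k}|^{N'}/\lambda^{N'})\,dx\to0$ for every $\lambda>0$ is equivalent to $\|u_{k}\|_{\exp_{N}L^{N'}(B)}\to0$.
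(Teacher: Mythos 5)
Your argument is correct, and since the survey states this theorem without proof (it is quoted from \cite{ATdO}), your half-line reduction is in fact essentially the route taken in that reference: the logarithmic substitution $v(t)=u(e^{-t})$ turning $\|\nabla u\|_{N}$ into $\|v'\|_{L^{N}(0,\infty)}$, the H\"older bound $|v(t)|\le t^{1/N'}\|v'\|_{N}$, the rescaling of a bad scale $t_k$ to $t=1$ where $D$-weak convergence kills the pointwise value, and an elementary Orlicz integral estimate from $\sup_t |v_k(t)|t^{-1/N'}\to0$; your converse metrization step via the $D$-invariance of the sup-seminorm is also sound. Two small remarks. First, the group as printed, $u\mapsto s^{1-1/N}u(r^{s})$, is not a family of isometries (nor even a bounded set of operators, so it does not fit Definition 1.1); the intended normalization is $u\mapsto s^{-(1-1/N)}u(r^{s})$, consistent with the factor $s^{-1/N'}$ appearing in Corollary 2.10 — you silently (and correctly) worked with this isometric version, which conjugates to $\delta_{s}v(t)=s^{-1/N'}v(st)$, but it would be worth saying so explicitly. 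Second, your conclusion is convergence in the $\exp_{N}L^{N'}$ Luxemburg norm, while the statement is phrased for $\exp L^{N'}(B)$; on the bounded ball these are interchangeable (the subtracted powers $|u|^{N'j}$, $j\le N-2$, are controlled via $L^{N}(B)$), a point the paper itself notes and which is worth one line in your write-up.
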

One can easily derive from here cocompactness in $W_{\mathrm{rad}}^{1,N}(\R^{N})$ by considering a sequence $u_k-u_k(1)$ on $B$, 
bounded in $H_0^1(B)$, and noting that the restriction of a radial sequence $u_k\rightharpoonup 0$ in $W^{1,N}(\R^{N})$
to the complement of $B$ has a uniform bound and converges to zero in $L^{NN'}$. 
\begin{cor}
\label{2.10}
The imbedding $ W_{\mathrm{rad}}^{1,N}(\R^{N})\hookrightarrow\exp_{N}L^{N'}(\R^{N})$
is cocompact relative to the group
\[
D=\lbrace g_{s}:\; g_{s}u(r)=u(1)+s^{-1/N'}(u(r^{s})-u(1)),\; r\le1;\; g_{s}u(r)=u(r),\; r>1\rbrace_{s>0},
\] 
and metrization of the $D$- weak
convergence is acheived by the norm
\[
\sup_{0<r<1}\frac{|u(r)-u(1)|}{(\log\frac{1}{r})^{1/N'}}+\|u\|_{p,\R^{N}\setminus B}, p\in(N,\infty).
\]
\end{cor}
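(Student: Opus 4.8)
The plan is to reduce the statement to the ball version stated just above by splitting $\R^N$ into $B$ and $\R^N\setminus B$: the non-compact behaviour near the origin is absorbed into the ball and handled by the preceding theorem, while the exterior is disposed of by an elementary radial decay estimate. I would first record the ``global'' consequence of the hypothesis: with $g_s=\mathrm{id}$ ($s=1$) one gets $u_k\rightharpoonup 0$ in $W^{1,N}(\R^N)$, and since for radial $u$ one has $u(1)=\frac1{\omega_{N-1}}\int_{|x|=1}u\,\mathrm d\sigma$, with $u\mapsto\int_{|x|=1}u\,\mathrm d\sigma$ a bounded linear functional on $W^{1,N}(\R^N)$ (trace on the compact hypersurface $\{|x|=1\}$, followed by the H\"older inequality on the shell $\{\frac12<|x|<2\}$), it follows that $u_k(1)\to 0$.

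Then I would carry out the reduction. Put $v_k:=(u_k-u_k(1))|_B$; then $v_k\in W^{1,N}_{0,\mathrm{rad}}(B)$ and $\|\nabla v_k\|_{L^N(B)}=\|\nabla u_k\|_{L^N(B)}$ is bounded. The crucial point is that on $B$ the operator $g_s$ acts by $(g_su)|_B=u(1)+s^{-1/N'}v(r^{s})$ with $v=u|_B-u(1)$; that is, modulo the additive constant $u(1)$, the ball component of $D$ is exactly the dilation group of the preceding theorem, normalised so as to preserve $\|\nabla\cdot\|_{L^N(B)}$. Since the restriction map $W^{1,N}(\R^N)\to W^{1,N}(B)$ is weak--weak continuous and $u_k(1)\to 0$, the hypothesis that $g_{s_k}u_k\rightharpoonup 0$ in $W^{1,N}(\R^N)$ for every sequence $(s_k)$ becomes $s_k^{-1/N'}v_k(r^{s_k})\rightharpoonup 0$ in $W^{1,N}(B)$, hence in the closed subspace $W^{1,N}_0(B)$, for every $(s_k)$; equivalently $v_k\cw 0$ for the ball group. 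The preceding theorem then yields $\|v_k\|_{\exp L^{N'}(B)}\to 0$ and $\sup_{0<r<1}\frac{|v_k(r)|}{(\log\frac1r)^{1/N'}}\to 0$.

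For the exterior I would use the radial decay estimate: for radial $u\in W^{1,N}(\R^N)$ and $r\ge 1$, from $|u(r)|^N=-\int_r^\infty N|u|^{N-2}uu'\,\mathrm d\rho$, the H\"older inequality, and $\int_r^\infty|u|^N\,\mathrm d\rho\le r^{1-N}\int_r^\infty|u|^N\rho^{N-1}\,\mathrm d\rho$ (and likewise for $u'$), one gets $|u(r)|\le C\|u\|_{W^{1,N}}\,r^{-(N-1)/N}$. Hence $\{u_k\}$ is uniformly bounded on $\{|x|\ge 1\}$, and for $q>N$ one has $\int_{|x|>R}|u_k|^q\le(\sup_{|x|>R}|u_k|)^{q-N}\,\|u_k\|_{L^N}^N\le C\,R^{-(N-1)(q-N)/N}$, small uniformly in $k$; with Rellich compactness on bounded shells this gives $u_k\to 0$ in $L^q(\R^N\setminus B)$ for every $q\in(N,\infty)$, in particular for $q=p$ and $q=NN'$. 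Assembling the two parts, the metrization norm of $u_k$ equals $\sup_{0<r<1}\frac{|v_k(r)|}{(\log\frac1r)^{1/N'}}+\|u_k\|_{L^p(\R^N\setminus B)}\to 0$; moreover on $B$ one even gets $\|u_k\|_{\exp_N L^{N'}(B)}\le\|v_k\|_{\exp_N L^{N'}(B)}+|u_k(1)|\,\|1\|_{\exp_N L^{N'}(B)}\to 0$ (on the bounded set $B$ the Orlicz norms for $\exp_N(t^{N'})$ and $e^{t^{N'}}$ are equivalent and $\|1\|_{\exp_N L^{N'}(B)}<\infty$), the exterior contributing the $L^q$-smallness for $q>N$, which is the content of cocompactness here.

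The converse implication of the metrization is routine, exactly as at the end of the proof of Proposition~\ref{prop:radial}: if $u_k\to 0$ in the metrization norm on a bounded subset of $W^{1,N}_{\mathrm{rad}}(\R^N)$, then pointwise control on $B$, trace control and $L^p$-control on $\R^N\setminus B$ force $u_k\rightharpoonup 0$ in $W^{1,N}(\R^N)$, and since each $g_{s_k}u_k$ stays bounded in $W^{1,N}(\R^N)$ with restrictions to $B$ and to $\R^N\setminus B$ both tending to $0$ weakly, $g_{s_k}u_k\rightharpoonup 0$, i.e.\ $u_k\cw 0$. The delicate point I expect is the translation in the reduction step: one has to keep track that it is $u_k-u_k(1)$, not $u_k$ itself, that the ball dilations act on, that the normalisation $s^{-1/N'}$ is precisely the one preserving $\|\nabla\cdot\|_{L^N(B)}$, and that weak convergence after restriction together with $u_k(1)\to 0$ genuinely yield ball-$D$-weak nullity; the only substantial analytic ingredient is the radial decay estimate of the third step, which is exactly what makes the lower powers of $u$ harmless at infinity in the Li--Ruf setting.
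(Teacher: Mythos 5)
Your argument is correct and follows essentially the same route as the paper, whose own derivation of the corollary is precisely the sentence preceding it: apply the ball theorem to $u_k-u_k(1)$ regarded as an element of $W^{1,N}_{0,\mathrm{rad}}(B)$, and dispose of the exterior via the uniform radial bound and convergence to zero in $L^q(\R^N\setminus B)$ for $q>N$; you merely supply the details the paper leaves implicit (the trace argument giving $u_k(1)\to 0$, the identification of the restricted group with the ball dilations in the isometric normalisation $s^{-1/N'}$, the Strauss-type decay estimate with interpolation and Rellich on shells, and the routine converse for the metrization). The only caveat is one you inherit from, rather than add to, the paper: passing from exterior smallness in $L^q$, $q>N$ (the paper's $L^{NN'}$), together with a uniform bound, to smallness of the global $\exp_N L^{N'}$-norm is asserted rather than proved, since the Orlicz function as defined still contains the power $|u|^N$.
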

Without the assumption of radiality, a cocompactness result is known presently only for $N=2$.  
\begin{thm} (\cite{AT})
The imbedding $H_{0}^{1}(B)\hookrightarrow\exp L^{2}(B)$
is cocompact relative to the set of translations combined with transformations 
\begin{equation}
\label{eq:zj}
u(z)\mapsto j^{-1/2}u(z^{j}, j\in\N,
\end{equation}
where $z$ expresses coordinates of $B$ as a complex variable. 
A (quasi-)metrization of the $D$-weak convergence is acheived by the quasinorm
$\sup_{0<r<1}\frac{u^{\star}(r)}{(\log\frac{1}{r})^{1/2}}$, where
$u^{\star}$ denotes the symmetric decreasing rearrangement of $u$. 
\end{thm}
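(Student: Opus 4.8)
The plan is to reduce, via Schwarz symmetrization, both the Orlicz‑norm estimate and the metrizing quasinorm to radial functions, to prove a quantitative ``quasinorm‑small implies Orlicz‑norm‑small'' bound (the analogue of the interpolation $\|u\|_q\le\|u\|_\infty^{1-p/q}\|u\|_p^{p/q}$ used in the Example), and then to extract a concentrating Moser bubble whenever that bound does not already force vanishing. Write $q(u)=\sup_{0<r<1}u^\star(r)/(\log\tfrac{1}{r})^{1/2}$ for the quasinorm in the statement. For $N=2$ the Orlicz function is $\exp_2(t)=e^{t}-1$, so $\|u\|_{\exp L^2(B)}=\inf\{\lambda>0:\int_B(e^{(|u|/\lambda)^2}-1)\,dx\le1\}$, $\exp L^2(B)\hookrightarrow L^2(B)$, and $H^1_0(B)\hookrightarrow\exp L^2(B)$ continuously by the Moser--Trudinger inequality. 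Two preliminary facts: (a) both $\|\cdot\|_{\exp L^2(B)}$ and $q$ depend on $u$ only through $u^\star$, while by Pólya--Szegő $\|\nabla u^\star\|_2\le\|\nabla u\|_2$; and for radial decreasing $v$, writing $v(r)=-\int_r^1v'(\rho)\,d\rho$ and using Cauchy--Schwarz gives $|v(r)|\le(2\pi)^{-1/2}\|\nabla v\|_2(\log\tfrac{1}{r})^{1/2}$, hence $q(u)\le(2\pi)^{-1/2}\|\nabla u\|_2$ on $H^1_0(B)$. (b) Since $B$ is bounded, $u_k\cw0$ (apply it with trivial gauges) gives $u_k\rightharpoonup0$ in $H^1_0(B)$, hence $u_k\to0$ in every $L^p(B)$, $p<\infty$, and in measure, by Rellich; in particular $(u_k)$ is bounded in $H^1_0(B)$.

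The first step is the key estimate: $q(u_k)\to0$ implies $\|u_k\|_{\exp L^2(B)}\to0$. By rearrangement invariance it suffices to take a radial decreasing $v$ with $\epsilon:=q(v)$ small; then $v(r)\le\epsilon(\log\tfrac{1}{r})^{1/2}$, so for $\epsilon^2<2\lambda^2$,
\[
\int_B\bigl(e^{v^2/\lambda^2}-1\bigr)\,dx=2\pi\int_0^1\bigl(e^{v(r)^2/\lambda^2}-1\bigr)r\,dr\le2\pi\int_0^1\bigl(r^{-\epsilon^2/\lambda^2}-1\bigr)r\,dr=\pi\,\frac{\epsilon^2/\lambda^2}{2-\epsilon^2/\lambda^2},
\]
and with $\lambda^2=\epsilon$ the right side is $\pi\epsilon/(2-\epsilon)\le1$ for small $\epsilon$, so $\|v\|_{\exp L^2(B)}\le\sqrt\epsilon$. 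Next, $q$ does not increase under $D$: translations preserve the distribution function; and for $g_ju(z)=j^{-1/2}u(z^j)$ the identity $\int_Bf(z^j)\,dz=\tfrac{1}{j}\int_B|w|^{2/j-2}f(w)\,dw$ gives $|\{z\in B:z^j\in S\}|\le\pi^{1-1/j}|S|^{1/j}$, which applied to the superlevel sets of $u$ yields $(g_ju)^\star(r)\le j^{-1/2}u^\star(r^j)$; since the right side is the radial rescaling of $u^\star$ and the change of variable $\rho=r^j$ shows that rescaling preserves $q$, one gets $q(gu)\le q(u)$ for every $g\in D$. Combined with the key estimate this gives the easy half of the metrization: for bounded $(u_k)$ with $q(u_k)\to0$ and any $(g_k)\subset D$, $q(g_ku_k)\to0$, hence $g_ku_k\to0$ in $\exp L^2(B)\hookrightarrow L^2(B)$, and boundedness in $H^1_0(B)$ forces $g_ku_k\rightharpoonup0$; thus $u_k\cw0$.

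It remains to prove cocompactness, which together with the above also yields the other half of the metrization, and this is the substantive step. Suppose $u_k\cw0$ but, along a subsequence, $\|u_k\|_{\exp L^2(B)}\ge c_0>0$. By the key estimate $q(u_k)\ge\delta>0$, so there are $r_k\in(0,1)$ with $u_k^\star(r_k)\ge\tfrac{\delta}{2}(\log\tfrac{1}{r_k})^{1/2}$; convergence in measure forces these radii out of every compact subinterval of $(0,1)$, and since $\|u_k\|_{\exp L^2(B)}$ is bounded \emph{below}, a definite part of the Orlicz mass sits at a vanishing scale (mere spreading toward $\partial B$ cannot support it). One would then reduce, by symmetrization, to the radial cocompactness of the Moser--Trudinger imbedding established above (in the case $N=2$); the real gap is the reverse passage — recovering from the concentration seen in $u_k^\star$ a genuine spatial concentration of $u_k$. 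Concretely, by the concentration--compactness principle for the functional $u\mapsto\int_Be^{4\pi u^2}$ (à la P.-L. Lions), after a further subsequence there are $x_0\in\overline{B}$ and scales $\lambda_k\to0$ such that suitable rescalings of $u_k$ about $(x_0,\lambda_k)$ converge weakly in $H^1_0$ to a nonzero Moser‑type profile. Realizing these rescalings, up to a bounded conformal change of variable, by a gauge $g_k\in D$ — a translation to bring $x_0$ to a reference point, then a power map $z\mapsto z^{j_k}$ with $j_k\asymp\log\tfrac{1}{\lambda_k}$ to deflate the scale to order $1$ (the maps $z\mapsto z^j$ send a bubble of scale $\epsilon$ concentrated at the origin to one of scale $\epsilon^{1/j}$) — one obtains $g_ku_k\not\rightharpoonup0$ in $H^1_0(B)$, contradicting $u_k\cw0$. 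Hence $\|u_k\|_{\exp L^2(B)}\to0$, and the metrization follows using the $D$‑invariance of $q$.

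The main obstacle is exactly this extraction and de‑symmetrization: turning the purely distributional statement ``$u_k^\star$ is of Moser size at a vanishing scale'' into a localized concentration of $u_k$ normalizable within the \emph{discrete} orbit structure of $D$. Three delicate points: (i) the superlevel sets of $u_k$ need not be localized, so making Lions' alternative quantitative enough to isolate a single ball‑like concentration calls for a covering/pigeonhole argument over boundary‑adapted Whitney cells, driven by the Dirichlet bound (through the coarea and isoperimetric inequalities, as in the analysis of Moser extremals); (ii) the gauges $z\mapsto z^j$ are indexed by $j\in\N$, so one must check that an integer $j_k$ places the normalized scale in a fixed bounded range, that the map — conformal, hence Dirichlet‑preserving once rescaled by $j^{-1/2}$ — is compatible with where $x_0$ sits (deep inside $B$ versus near $\partial B$ at a comparable scale, where the translations in $D$ do the alignment), and that the limiting profile has nonzero Dirichlet energy; and (iii) one must route around ``spreading'' behaviour (sequences with $q$ bounded below but vanishing Orlicz norm), which is why the extraction is driven by the lower bound on $\|u_k\|_{\exp L^2(B)}$ rather than on $q$. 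Everything else — the symmetrization, the one‑dimensional radial estimate, the integral bound, and the Rellich reductions — is routine.
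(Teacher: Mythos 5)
Your preliminary reductions are fine (the paper itself gives no proof of this theorem, quoting it from \cite{AT}): the radial estimate $q(u)\le C\|\nabla u\|_{2}$, the computation showing $q(u_k)\to0\Rightarrow\|u_k\|_{\exp L^{2}}\to0$, the non-increase of $q$ under the gauges, and hence the implication ``$q(u_k)\to0$ and bounded $\Rightarrow u_k\cw0$'' are all correct and routine. But the substantive half of the statement --- $u_k\cw0$ implies vanishing --- is not proved; it is only described. The step you label as ``the real gap'' is in fact the entire content of the cited result: one must pass from the rearrangement-level information ``$u_k^{\star}(r_k)\gtrsim(\log\frac1{r_k})^{1/2}$ at some $r_k\to0$'' to the existence of centers $z_k$ and \emph{integers} $j_k$ such that $j_k^{-1/2}u_k(z_k+z^{j_k})$ has a nonzero weak limit in $H^1_0$. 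Invoking ``the concentration--compactness principle for $u\mapsto\int_B e^{4\pi u^2}$ \`a la P.-L. Lions'' does not supply this: Lions' alternative concerns concentration of the measures $|\nabla u_k|^2dx$ and improved Moser--Trudinger constants, and gives neither the scales, nor the localization, nor the realizability of the normalization inside the discrete gauge set \eqref{eq:zj}; the de-symmetrization, the pigeonhole over cells, and the verification that the limiting profile is nonzero are exactly the hard analysis of \cite{AT}, and your proposal leaves them as acknowledged obstacles rather than carrying them out.

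There is also a logical flaw in how you claim to recover the metrization. The quasinorm $q(u)=\sup_{0<r<1}u^{\star}(r)/(\log\frac1r)^{1/2}$ is strictly stronger than the $\exp L^{2}$ norm (the paper notes it is marginally stronger than the equivalent Zygmund quasinorm), so cocompactness of the imbedding into $\exp L^{2}$ does \emph{not} yield the implication $u_k\cw0\Rightarrow q(u_k)\to0$, which is what the metrization asserts. You explicitly choose, in your point (iii), to drive the bubble extraction by a lower bound on $\|u_k\|_{\exp L^{2}}$ rather than on $q$, precisely to avoid sequences with $q$ bounded below but vanishing Orlicz norm; but then such sequences (which your plan never excludes from being $D$-weakly null) are exactly the ones that would contradict the metrization, and the closing sentence ``the metrization follows using the $D$-invariance of $q$'' is a non sequitur. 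To prove the theorem as stated you must run the extraction under the weaker hypothesis $q(u_k)\ge\delta$ --- including the boundary/spreading regime $r_k\to1$ --- and show that it too produces a gauge sequence along which $u_k$ does not converge weakly to zero.
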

Note that the quasinorm $\sup_{0<r<1}\frac{u^{\star}(r)}{(\log\frac{1}{r})^{1/2}}$ 
is marginally stronger than the Zygmund quasinorm
$\sup_{0<r<1}\frac{u^{\star}(r)}{(1+\log\frac{1}{r})^{1/2}}$, which is equivalent
to the $\exp L^{2}$- norm (see \cite{BennettRudnick}). 
\subsection{Cocompactness of Strichartz imbeddings}
Consider a Strichartz imbedding that estimates the
space-time $L^{q}$- norm of the solution of the evolutionary Schr\"odinger
equation by the $L^{2}$-norm of the initial data (for details see \cite{Cazenave, KV}).
The following result is due to Terence Tao, \cite{Tao}, and the version of the proof in \cite{KV} optimizes the group.
From the presentation of \cite{KV} we could easily infer 
that cocompactness remains valid if one restricts dilations to a discrete group. In the theorem below $\hat u$ denotes
the Fourier transform.
\begin{thm} 
\label{thm:Tao}
Let $N\ge3$. The imbedding defined by the inequality
\[
\|e^{it\Delta}u\|_{L^{q}(\R^{N+1})}\le C\|u\|_{L^{2}(\R^{N})},\; q=\frac{2N+2}{N},
\]
is cocompact in $L^{2}$ with respect to the product group of the
following transformations ($\gamma>1$ is a fixed number):
\[
u(x)\mapsto\gamma^{\frac{Nj}{2}}u(\gamma^{j}x),\; j\in\Z;
\]
\[
u(x)\mapsto u(x-y),\; y\in\R^{N};
\]
\[
\hat{u}(\xi)\mapsto\hat{u}(\xi-\xi'),\;\xi'\in\R^{N}.
\] 
\end{thm}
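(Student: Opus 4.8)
The plan is to reduce the Strichartz cocompactness statement to the three explicit symmetry groups by a profile-extraction argument in the spirit of G\'erard's and Jaffard's treatment of the $\dot H^{s}$-case, but carried out in the physical/Fourier phase space because the Schr\"odinger propagator intertwines with all three families of transformations. Concretely, suppose $u_{k}$ is bounded in $L^{2}(\R^{N})$ and satisfies $u_{k}\cw 0$ relative to the product group $D$; I must show $\|e^{it\Delta}u_{k}\|_{L^{q}(\R^{N+1})}\to 0$ with $q=\frac{2(N+2)}{N}$. The endpoint of the argument is an inverse-Strichartz inequality: if the space-time norm does \emph{not} vanish, then there exist sequences $\gamma^{j_{k}}\in\gamma^{\Z}$, $y_{k}\in\R^{N}$, $\xi_{k}\in\R^{N}$ (and, implicitly, time shifts, which are absorbed since $\|e^{it\Delta}u\|_{L^q_{t,x}}$ is translation-invariant in $t$ and a time shift acts on $u$ by the unitary $e^{it_k\Delta}$, itself a Fourier multiplier commuting with the Galilei and scaling actions) so that the renormalized sequence $g_{k}^{-1}u_{k}$ has a nonzero weak limit in $L^{2}$, where $g_{k}$ is the corresponding element of $D$. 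This contradicts $u_{k}\cw 0$.

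The key steps, in order, are: (1) \textbf{Refined Strichartz / bilinear estimate.} Decompose $u_{k}$ in frequency using a Littlewood--Paley-type family of cubes (or dyadic annuli) and use a bilinear refinement of the Strichartz inequality (Bourgain's improvement, or the bilinear estimate of the form $\|e^{it\Delta}P_{Q_{1}}f\ e^{it\Delta}P_{Q_{2}}g\|_{L^{2}_{t,x}}\lesssim (\mathrm{dist}(Q_{1},Q_{2}))^{-1/2}\|f\|_{2}\|g\|_{2}$ for well-separated frequency cubes of comparable size) to show that a nonvanishing space-time $L^{q}$-norm forces a single frequency cube, of some dyadic size $2^{-m_{k}}$, to carry a nontrivial portion of the $L^{q}$-mass of $e^{it\Delta}u_{k}$. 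Rescaling by the scaling symmetry normalizes $2^{-m_{k}}$ to unit size, and here is where only powers of $\gamma$ are needed: one may snap the dyadic scale to the lattice $\gamma^{\Z}$ at the cost of a bounded factor, exactly as the remark after Theorem~2.5 indicates. (2) \textbf{Frequency localization.} After rescaling, the relevant frequencies lie in a fixed unit cube; translating in frequency (a Galilei boost, i.e. the third transformation $\hat u(\xi)\mapsto\hat u(\xi-\xi')$, which is the one that commutes with $e^{it\Delta}$ only up to a modulation-and-shift in $(t,x)$) centers that cube at the origin. (3) \textbf{Physical localization and weak limit.} The rescaled, frequency-localized sequence is now bounded in $L^{2}$ with Fourier support in a fixed compact set, hence (after the space translation $u\mapsto u(\cdot-y_{k})$ chosen so that $e^{it\Delta}$ of the sequence does not escape to spatial infinity in $L^{q}_{t,x}$) it is locally compact in physical space; extract a weakly convergent subsequence with a nonzero limit $w$, using that $e^{it_{0}\Delta}w\neq 0$ on a set of positive $(t,x)$-measure because the $L^{q}$-norm was bounded below. (4) \textbf{Metrization.} For the converse direction, if $\|e^{it\Delta}u_{k}\|_{L^{q}_{t,x}}\to 0$ on a bounded set of $L^{2}$, then because every $g\in D$ preserves the Strichartz norm (all three transformations are symmetries of $e^{it\Delta}$ in the appropriate sense), $g_{k}u_{k}$ also has vanishing Strichartz norm, and a vanishing Strichartz norm together with an $L^{2}$-bound forces weak convergence to $0$ in $L^{2}$ (test against a fixed $L^{2}$ function whose propagated extension lies in the dual Strichartz space and use a density argument), giving $u_{k}\cw 0$.

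The main obstacle is Step (1): the passage from ``$\limsup\|e^{it\Delta}u_{k}\|_{L^q}>0$'' to the isolation of a single distinguished frequency scale, together with the simultaneous identification of the distinguished frequency center, space center, and time shift. This is precisely the hard-analysis content that replaces, in the dispersive setting, the wavelet ``compactness cell'' of subsection 2.2, and it is the point at which Tao's original argument invested real work; the subsequent weak-limit extraction and the metrization are soft functional-analytic steps. A secondary nuisance is bookkeeping the interaction of the Galilei boost with the space-time translations (a boost composed with a time translation produces a space translation and a modulation), so that the final renormalizing operator genuinely lies in the product group $D$ as written rather than in some larger group; this is handled by the standard Galilei-algebra identities for $e^{it\Delta}$ and is where the explicit form of the three generators in the statement is used.
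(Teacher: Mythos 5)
The paper itself does not prove this theorem; it quotes it from Tao and from Killip--Visan, whose route (an inverse Strichartz inequality proved by contraposition: refined/bilinear Strichartz to isolate a distinguished frequency cube, renormalization, extraction of a nonzero weak limit) is exactly the one you outline. As a proof, however, your proposal has two problems. The first is that your Step (1), which you yourself identify as the main obstacle, is invoked by citation rather than proved; since this is precisely what the paper calls the ``reassembly of the inequality from the cells of compactness'', what you have is a plan for the soft part wrapped around an unproved hard part.

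The second problem is a genuine mathematical gap, located exactly where you wave it away: the parenthetical claim that the time shifts ``are absorbed since \dots $e^{it_k\Delta}$ \dots commutes with the Galilei and scaling actions.'' It does not: $e^{it\Delta}S_\lambda=S_\lambda e^{it\lambda^{2}\Delta}$ (with $S_\lambda u=\lambda^{N/2}u(\lambda\cdot)$) and $e^{it\Delta}M_\xi=c\,M_\xi T_{2t\xi}e^{it\Delta}$ (with $M_\xi u=e^{i\xi\cdot x}u$, $T_y u=u(\cdot-y)$, $|c|=1$), so conjugation rescales the time and creates shifts, and, more to the point, the inverse-Strichartz extraction in Step (3) produces a nonzero weak limit of $g_k^{-1}e^{it_k\Delta}u_k$ with times $t_k$ that are in general unbounded. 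A nonzero weak limit of that form does \emph{not} contradict $u_k\cw 0$ for the three-parameter group listed in the statement. Indeed, take $w\in L^{2}(\R^{N})\setminus\{0\}$, $t_k\to\infty$, and $u_k=e^{-it_k\Delta}w$: then $\|e^{it\Delta}u_k\|_{L^{q}(\R^{N+1})}=\|e^{it\Delta}w\|_{L^{q}(\R^{N+1})}$ is a nonzero constant, while for every sequence $G_k=M_{\xi_k}T_{y_k}S_{\lambda_k}$ one has $|\langle G_ku_k,\phi\rangle|=|\langle w,\,M_{\eta_k}T_{d_k}S_{1/\lambda_k}e^{i(t_k/\lambda_k^{2})\Delta}\phi\rangle|$, which tends to $0$ for Schwartz $w,\phi$: if $\lambda_k/t_k\to0$ use the dispersive bound $\|e^{is\Delta}\phi\|_\infty\le C|s|^{-N/2}\|\phi\|_1$, giving the bound $C\|w\|_1\|\phi\|_1(\lambda_k/t_k)^{N/2}$; otherwise $\lambda_k\to\infty$, $t_k/\lambda_k^{2}\to0$, and the bound $\|w\|_1\lambda_k^{-N/2}\|\phi\|_\infty+o(1)$ applies; density then gives $u_k\cw0$. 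So the ``absorption'' cannot be a commutation bookkeeping matter: either the contradiction must be drawn against the larger set of renormalizations that includes $u\mapsto e^{is\Delta}u$ (equivalently, after the pseudoconformal transform, quadratic modulations), which is how Tao and Killip--Visan state the inverse Strichartz inequality and the ensuing profile decomposition, or you must prove that the extracted times can be taken bounded, which the example above rules out in general. This reduction to the group exactly as listed is the delicate point of the statement, and your proposal does not address it. (Two minor points: your exponent $q=2(N+2)/N$ is the correct mass-critical one, the statement's $\frac{2N+2}{N}$ being an evident slip; and your Step (4) on metrization is not required by the statement.)
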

Translations in the Fourier domain allows to consider functions
with support on a cube in the Fourier domain, for which convergence
in $L^{2}$ implies convergence in $C^m(\R^N)$ for any $m\in\N$. The main technical point in the proof
is the ``reassembly'' of the inequality from the ``cells of compactness'' using methods of harmonic analysis. 

\section{Profile decompositions}
Given an imbedding of a Banach space $X$ into a Banach
space $Y$ and a bounded set $D$ of bounded linear bijections $X\to X$, a profile decomposition is a representation of a bounded
sequence $\{u_k\}\subset X$ in the form 
\begin{equation}
\label{pd}
u_k=\sum_{n\in\N}g_k^{(n)}w^{(n)}+r_k, \;r_k\stackrel{Y}{\to}0,  
\end{equation}
where the terms $g_k^{(n)}w^{(n)}$, $n\in\N$ (which may be called {\em elementary concentrations}), are actions of 
sequences of operators $\{g_k^{(n)}\}_k$ on elements $w^{(n)}\in X$, (which may be called {\em concentration profiles}), that
satisfy ${g_k^{(n)}}^{-1}u_k\rightharpoonup w^{(n)}$. Furthermore, the elementary concentrations are expected 
to be asymptotically decoupled in the sense ${g_k^{(n)}}^{-1}g_k^{(m)}\rightharpoonup 0$.
Convergence of the remainder in $X$ should not be generally expected, as can be illustrated on the following
example. Let $X=\ell^p(\Z)$ and let $u_k(j)=1/k^{1/p}$ for $j=1,\dots,k$ taking zero values for all other $j$.
If $D$ is the set of shifts $u\mapsto u(\cdot-j)$, $j\in\Z$, then $u_k$ has no (nonzero) profiles under any shift sequence, so $r_k=u_k$
and $\|r_k\|_p=1$. At the same time $\|r_k\|_q\to 0$ for any $q>p$.
Many profile decompositions found in literature (typically in papers using the wavelet argument, starting with \cite{Gerard}) 
are stated with a weaker remainder, namely, in the form 
\begin{equation}
\label{wr}
u_k=\sum_{n=1}^Mg_k^{(n)}w^{(n)}+r^{(M)}_k, \;\lim_{M\to\infty}\limsup_{k\to\infty}\|r^{(M)}_k\|_Y=0. 
\end{equation}
In many significant cases, however, such as Palais-Smale sequences for elliptic problems, one can establish a lower bound on some
norm of the concentration profiles, which assures that only finitely many profiles are non-zero, in which case the weak remainder 
$r_k^{(M)}$ with $M$ equal to the number of nonzero profiles, is the same as the strong remainder. 

As it was established in \cite{Revista}, profile decompositions hold whenever $X$ is a Hilbert space under a general
condition on the set $D$. A similar result for Banach space is also expected to be true, under some, rather weak but yet
unknown general conditions on $X$. As a temporary fix we give here a profile decomposition in the case when $X$ is continuously
imbedded into $L^q(M)$, where $M$ is a measure space.
\par
The abstract profile decomposition gives a remainder $r_k$ that converges to zero $D$-weakly. Cocompactness 
is defined as the property of $X,Y,D$ that $D$-weak convergence in $X$ implies convergence in the norm of $Y$.
It is verified (although not under that name and often expressed in different terms) and employed also in the proofs of profile
decompositions for specific functional spaces, such as profile decompositions in the cited papers of Solimini and Bahouri, Cohen and Koch,
(the latter verifies the property named Assumption 1 which, as we shown above, implies cocompactness). It is not clear yet,
when there are profile decompositions with the weak, but not with the strong, remainder, but
we expect that this may occur only under some special conditions, such as absence of some strong convexity.
The abstract profile decompositions were not yet considered for non-reflexive spaces. 

For the sake of simplicity we here consider the case when the set $D$ consists of isometries on $X$, as this is the case most often 
studied in applications. A profile decomposition with quasi-isometric operators in the Hilbert space
is given in \cite{ccbook}, Theorem~3.1.

Let us define the general class of the set of isometric operators that yields general profile decompositions. 
As usual, pointwise (or strong) operator convergence in $X$, $g_{k}\stackrel{s}{\to}g$, 
is convergence $g_{k}x\to gx$ in $X$ for any $x\in X$.
\begin{defn}
A set $D$ of surjective linear isometries on a Banach space $X$, 
closed with respect to the pointwise operator convergence,
is called a \emph{dislocation} (or a \emph{gauge}) 
set if any sequence $g_k^{-1}$ with $g_{k}\in D$,
that does not converge weakly to zero, has a pointwise convergent
subsequence.
\end{defn}
All operator sets in the cocompactness results of the previous section
are known to be dislocation sets (see \cite{ccbook} for Euclidean
rescalings, anisotropic rescalings on Carnot groups, actions of isometries
on locally compact manifolds, and magnetic shifts, see \cite{ATdO} and
\cite{AT} for inhomogeneous dilations in the Moser-Trudinger settings, 
and \cite{KV} for shifts in the Fourier variable.)

\subsection{Hilbert space}
In the general Hilbert space, the following profile decomposition
holds. \cite{ccbook}, Corollary~3.2.

\begin{thm}
\label{hilbcc}
Let $D$ be a set of dislocations on a Hilbert space $H$ and
let $u_{k}$ be a bounded sequence in $H$. There is a renumbered
subsequence of $u_{k}$ and sequences $\{g_{k}^{(n)}\}\subset D$, $w^{(n)}\in H$,
$n\in\N$, $g_{k}^{(1)}=\mathrm{id}$, such that

\[
g_{k}^{(n)^{-1}}u_{k}\rightharpoonup w^{(n)},
\]

\[
g_{k}^{(n)^{-1}}g_{k}^{(m)}\rightharpoonup0\text{ whenever }m\neq n,
\]

\[
\|u_{k}\|^{2}=\sum_{n}\|w^{(n)}\|^{2}+\|r_k\|^2+o(1),
\]
where
\[
r_k:=u_{k}-\sum_{n}g_{k}^{(n)}w^{(n)}\cw0,
\]
and the series in the last expression converges in $H$ uniformly
with respect to $k$. If, in addition, an imbedding $H\hookrightarrow Y$ is $D$-cocompact, the last expression
vanishes in $Y$.
 \end{thm}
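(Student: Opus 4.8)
The plan is to construct the profiles one at a time by an iterative extraction procedure, using weak compactness of bounded sets in $H$ together with the defining property of a dislocation set. First I would set $g_k^{(1)}=\mathrm{id}$ and pass to a subsequence so that $u_k\rightharpoonup w^{(1)}$. Having extracted $w^{(1)},\dots,w^{(n-1)}$ and the associated sequences $\{g_k^{(j)}\}$, set $r_k^{(n-1)}=u_k-\sum_{j<n}g_k^{(j)}w^{(j)}$ and put $\sigma_{n}=\sup\{\,\|w\|:\ w \text{ is a weak limit along a subsequence of } g_k^{-1}r_k^{(n-1)} \text{ for some } \{g_k\}\subset D\,\}$. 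If $\sigma_n=0$ the extraction terminates (and then $r_k^{(n-1)}\cw 0$); otherwise choose $\{g_k^{(n)}\}\subset D$ and a subsequence so that $g_k^{(n)^{-1}}r_k^{(n-1)}\rightharpoonup w^{(n)}$ with $\|w^{(n)}\|\ge\tfrac12\sigma_n$, and continue. A diagonal argument over $n$ produces one subsequence that works for all profiles simultaneously.

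The core of the argument is the asymptotic orthogonality and the Pythagorean identity. I would prove by induction on $n$ that $g_k^{(n)^{-1}}g_k^{(m)}\rightharpoonup 0$ for $m<n$: since $g_k^{(n)^{-1}}r_k^{(n-1)}\rightharpoonup w^{(n)}\neq 0$, the sequence $g_k^{(n)^{-1}}g_k^{(m)}$ cannot, after passing to a subsequence, fail to converge weakly to zero without—by the dislocation property applied to $(g_k^{(m)^{-1}}g_k^{(n)})$—having a pointwise convergent subsequence, which one checks forces $w^{(n)}$ to be orthogonal to all previously extracted data and, combined with the choice $\|w^{(n)}\|\ge\tfrac12\sigma_n$, yields a contradiction with the definition of $\sigma_n$. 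Once asymptotic orthogonality is in hand, expanding $\|r_k^{(n)}\|^2=\|r_k^{(n-1)}\|^2-\|w^{(n)}\|^2+o(1)$ uses $\langle r_k^{(n-1)},g_k^{(n)}w^{(n)}\rangle=\langle g_k^{(n)^{-1}}r_k^{(n-1)},w^{(n)}\rangle\to\|w^{(n)}\|^2$ and that the cross terms $\langle g_k^{(m)}w^{(m)},g_k^{(n)}w^{(n)}\rangle=\langle g_k^{(n)^{-1}}g_k^{(m)}w^{(m)},w^{(n)}\rangle\to 0$. Summing gives $\sum_n\|w^{(n)}\|^2\le\limsup_k\|u_k\|^2<\infty$, hence $\|w^{(n)}\|\to 0$, hence $\sigma_n\to 0$; uniform convergence of $\sum_n g_k^{(n)}w^{(n)}$ in $H$ (uniformly in $k$) follows from the same orthogonality estimates, which show the tail $\|\sum_{n\ge N}g_k^{(n)}w^{(n)}\|^2\le\sum_{n\ge N}\|w^{(n)}\|^2+o(1)$. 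This yields $r_k=u_k-\sum_n g_k^{(n)}w^{(n)}$ with $\|u_k\|^2=\sum_n\|w^{(n)}\|^2+\|r_k\|^2+o(1)$, and the construction guarantees $r_k\cw 0$ by the $\sigma_n\to0$ bound.

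For the final sentence, suppose in addition $H\hookrightarrow Y$ is $D$-cocompact. Since $r_k\cw 0$ means $g_k r_k\rightharpoonup 0$ in $H$ for every $\{g_k\}\subset D$, the definition of cocompactness immediately gives $\|r_k\|_Y\to 0$, which is exactly the claim.

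The main obstacle I expect is the bookkeeping in the orthogonality induction: verifying that a pointwise-convergent subsequence of $g_k^{(n)^{-1}}g_k^{(m)}$ (whose existence is what the dislocation hypothesis provides when the weak limit is nonzero) genuinely collapses the extraction—i.e. that its limit operator $g$ would give $g_k^{(m)^{-1}}r_k^{(n-1)}$ a weak limit of norm at least $\|w^{(n)}\|$ already "visible" at step $m$, contradicting maximality. Making this precise requires care in tracking how the remainders $r_k^{(j)}$ transform under the $g_k^{(j)}$ and in justifying the interchange of limits; everything else is standard Hilbert-space manipulation. I would assume the result as stated (it is quoted from \cite{ccbook}, Corollary~3.2) and present the above as the natural reconstruction.
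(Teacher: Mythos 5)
Your overall strategy is correct and is essentially the same as that of the source the paper cites for this theorem (the paper itself gives no proof; it quotes Corollary~3.2 of \cite{ccbook}): iterative extraction of profiles governed by the maximality parameter $\sigma_n$, a diagonal subsequence, asymptotic orthogonality via the dislocation property, the Pythagorean bookkeeping, and the final cocompactness remark, which is indeed immediate from the definitions. Two points in your sketch need to be made precise for the argument to close. First, the dislocation hypothesis as stated concerns sequences $g_k^{-1}$ with $g_k\in D$, whereas your induction applies it to the products $g_k^{(m)^{-1}}g_k^{(n)}$; for this you need $D$ to be (or to be contained in) a group of surjective isometries enjoying the same property, which is how the notion is actually set up in \cite{ccbook} --- under the survey's bare wording this step is not licensed. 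Second, the contradiction in the orthogonality induction is not that $w^{(n)}$ becomes ``orthogonal to previously extracted data''. The correct mechanism is: if $g_k^{(n)^{-1}}g_k^{(m)}\not\rightharpoonup0$, pass to a subsequence on which $g_k^{(m)^{-1}}g_k^{(n)}\to g$ pointwise; since these are surjective isometries, their adjoints converge pointwise as well, so $g_k^{(n)^{-1}}r_k^{(n-1)}\rightharpoonup w^{(n)}$ yields $g_k^{(m)^{-1}}r_k^{(n-1)}\rightharpoonup g\,w^{(n)}$. On the other hand, $g_k^{(m)^{-1}}r_k^{(m)}\rightharpoonup0$ by the construction at step $m$, and $g_k^{(m)^{-1}}g_k^{(j)}\rightharpoonup0$ for $m<j<n$ by the inductive hypothesis, so $g_k^{(m)^{-1}}r_k^{(n-1)}\rightharpoonup0$; since $g$, as a pointwise limit of isometries, is injective, this forces $w^{(n)}=0$, contradicting $\|w^{(n)}\|\ge\sigma_n/2>0$. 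With these two repairs, the rest of your outline (the energy identity, uniform convergence of the series from the tail bound $\sum_{n\ge N}\|w^{(n)}\|^2\to0$, the deduction $r_k\cw0$ from $\sigma_n\to0$, and the passage to $\|r_k\|_Y\to0$ under cocompactness) goes through exactly as in \cite{ccbook}.
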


Note that $g_{k}^{(n)^{-1}}g_{k}^{(m)}\rightharpoonup0$ if and only
if $(g_{k}^{(m)}v,g_{k}^{(n)}w)\to0$ for any $v,w\in H$, so this
relation can be called asymptotic orthogonality.
For the group of Euclidean shifts, the relation of asymptotic orthogonality means $|y_k^{(n)}-y_k^{(m)}|\to\infty$.
More generally, for actions of isometries of a Riemannian manifold $M$, asymptotic orthogonality means that for some point $x_0\in M$
(and then for any other point) the sequence ${\eta_k^{(n)}}^{-1}\eta_k^{(m)}x_0$ has no convergent subsequence.
For the rescalings $u\mapsto 2^{rj}u(2^j(\cdot-y))$, $y\in\R^N$, $j\in\Z$, asymptotic orthogonality means 
\begin{equation}
\label{orth}
|j_k^{(n)}-j_k^{(m)}|+|y_k^{(n)}-y_k^{(m)}|\to\infty. 
\end{equation}
Analogous orthogonality condition arises on Carnot groups, with rescalings $2^{jr(\nu)}T_{\gamma^j}u\circ\eta$ 
involving inhomogeneous dilations and  left group shifts (\cite{ccbook}, Section 9.9)

\subsection{Banach space case}
There is no generalization of Theorem~\ref{hilbcc} for the general Banach
space, but there is one (\cite{Cwiti2}) for functional spaces cocompactly
imbedded into $L^{p}(M,\mu)$ where $(M,\mu)$ is a measure space.
Profile decompositions, that are not particular cases of Theorem~\ref{hilbcc} or Theorem~\ref{thm:Banach} below, are summarized, 
to a great extent, in \cite{BCK}, which is dedicated to the case of Euclidean rescalings on functions of $\R^N$. 
\begin{thm}
\label{thm:Banach}
Let $M$ be a measure space and let $D$ be a set of dislocations on a reflexive Banach space
$X$ continuously imbedded into $L^{p}(M)$ with some $p>1$,
and assume that the operators in $D$ are isometric in $L^{p}$ and
that the imbedding $X\hookrightarrow L^{p}$ is $D$-cocompact. let
$u_{k}$ be a bounded sequence in $X$. There is a renumbered subsequence
of $u_{k}$ and sequences $(g_{k}^{(n)})\subset D$, $w^{(n)}\in X$,
$n\in\N$, $g_{k}^{(1)}=\mathrm{id}$, such that

\[
g_{k}^{(n)^{-1}}u_{k}\rightharpoonup w^{(n)},
\]

\[
g_{k}^{(n)^{-1}}g_{k}^{(m)}\rightharpoonup0\text{ whenever }m\neq n,
\]

\[
\sum_{n}\|w^{(n)}\|_{p}^{p}\le\liminf\|u_{k}\|_{p}^{p},
\]

\[
u_{k}-\sum_{n}g_{k}^{(n)}w^{(n)}\stackrel{L^{p}}{\to}0,
\]
and the series in the last expression converges in $L^{p}$ uniformly
with respect to $k$. If $X$ is dense in $L^{p}$, then $L^{p}$
provides a metrization of the $D$-weak convergence. 
\end{thm}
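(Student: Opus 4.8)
The plan is to construct the profiles one at a time by an iterative extraction scheme, using $D$-weak convergence as the stopping criterion and cocompactness only at the very end. First I would set $r_k^{(0)}=u_k$ and $g_k^{(1)}=\mathrm{id}$, so $w^{(1)}$ is a weak limit of a subsequence of $u_k$ (which exists by reflexivity). Suppose inductively that $g_k^{(1)},\dots,g_k^{(n)}$ and $w^{(1)},\dots,w^{(n)}$ have been chosen and set $r_k^{(n)}=u_k-\sum_{m\le n}g_k^{(m)}w^{(m)}$. If $r_k^{(n)}\cw 0$, the process terminates; otherwise, by definition of $D$-weak convergence there is a sequence $(h_k)\subset D$ and a renamed subsequence with $h_k^{-1}r_k^{(n)}\rightharpoonup w^{(n+1)}\ne 0$. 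One then sets $g_k^{(n+1)}=h_k$. If the process never terminates, one passes to a diagonal subsequence over $n$. The asymptotic orthogonality $g_k^{(n)^{-1}}g_k^{(m)}\rightharpoonup 0$ for $m\ne n$ should be arranged inside the induction: if some subsequence of $g_k^{(n+1)^{-1}}g_k^{(m)}$ did not converge weakly to zero, then since $D$ is a dislocation set the operators $g_k^{(n+1)^{-1}}g_k^{(m)}$ would converge pointwise along a subsequence, which would force $h_k^{-1}r_k^{(n)}\rightharpoonup 0$ after unwinding the definition of $r_k^{(n)}$ — contradicting $w^{(n+1)}\ne 0$. This is the standard mechanism; one also checks $g_k^{(n)^{-1}}u_k\rightharpoonup w^{(n)}$ survives the later extractions because the orthogonality kills all cross terms.

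The next step is the $L^p$ almost-orthogonality of masses, $\sum_n\|w^{(n)}\|_p^p\le\liminf\|u_k\|_p^p$, and the uniform-in-$k$ convergence of the series $\sum_n g_k^{(n)}w^{(n)}$ in $L^p$. Here I would use the Brezis–Lieb lemma together with the fact that the operators in $D$ are $L^p$-isometries: for each fixed $N$ the terms $g_k^{(1)}w^{(1)},\dots,g_k^{(N)}w^{(N)}$ are pairwise asymptotically $L^p$-orthogonal (their supports, morally, separate) because of $g_k^{(n)^{-1}}g_k^{(m)}\rightharpoonup 0$ and the isometry property, so an iterated application of Brezis–Lieb gives $\|u_k\|_p^p=\sum_{n\le N}\|w^{(n)}\|_p^p+\|r_k^{(N)}\|_p^p+o(1)$ as $k\to\infty$. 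Taking $\liminf_k$ and then $N\to\infty$ yields $\sum_n\|w^{(n)}\|_p^p\le\liminf_k\|u_k\|_p^p<\infty$, and the same identity shows that the tail sums $\sum_{n>N}g_k^{(n)}w^{(n)}$ have $L^p$-norm controlled uniformly in $k$ by $(\sum_{n>N}\|w^{(n)}\|_p^p)^{1/p}+o(1)$, giving the uniform convergence of the series. I expect the rigorous bookkeeping of the iterated Brezis–Lieb argument, in particular extracting the right subsequences so that all the $o(1)$ terms are genuinely uniform as $N$ grows, to be the main technical obstacle.

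Finally, to get $u_k-\sum_n g_k^{(n)}w^{(n)}\stackrel{L^p}{\to}0$, I would invoke $D$-cocompactness of $X\hookrightarrow L^p$. Let $\epsilon>0$; by the uniform convergence of the series pick $N$ with $\|\sum_{n>N}g_k^{(n)}w^{(n)}\|_p\le\epsilon$ for all large $k$, so it suffices to show $\|r_k^{(N)}\|_p\to 0$, where $r_k^{(N)}=u_k-\sum_{n\le N}g_k^{(n)}w^{(n)}$. By construction the extraction process, when continued past stage $N$, produces only zero profiles in the limit, which forces $r_k^{(N)}\cw 0$ in $X$ (any nonzero $D$-weak limit of $r_k^{(N)}$ would have been extracted as $w^{(N+1)}$); since the sequence $r_k^{(N)}$ is bounded in $X$, cocompactness gives $\|r_k^{(N)}\|_p\to 0$. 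Combining, $\limsup_k\|u_k-\sum_n g_k^{(n)}w^{(n)}\|_p\le\epsilon$ for every $\epsilon$, hence the limit is zero. The last assertion, that $L^p$ metrizes $D$-weak convergence on bounded sets of $X$ when $X$ is dense in $L^p$, is then immediate: cocompactness gives one implication, and for the converse, if $u_k\to 0$ in $L^p$ with $\{u_k\}$ bounded in $X$ then for any $(g_k)\subset D$ the $L^p$-isometry property gives $g_k u_k\to 0$ in $L^p$, hence $g_k u_k\rightharpoonup 0$ in $L^p$; testing against $X^*$-functionals, which by density of $X$ in $L^p$ and reflexivity are determined by their action on a dense subspace, upgrades this to $g_k u_k\rightharpoonup 0$ in $X$, i.e. $u_k\cw 0$.
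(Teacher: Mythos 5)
Note first that the paper itself does not prove Theorem~\ref{thm:Banach}: it is quoted from \cite{Cwiti2} (in preparation), so your proposal can only be measured against the standard extraction scheme used in the Hilbert-space analogue (Theorem~\ref{hilbcc}, \cite{Revista}, \cite{ccbook}) and in Solimini \cite{Solimini}. Judged on its own terms, your argument has a genuine gap in the extraction and in the final cocompactness step. You extract at each stage an \emph{arbitrary} nonzero $D$-weak limit of $r_k^{(n)}$; without a near-maximal selection (e.g.\ choosing $w^{(n+1)}$ with norm at least half the supremum of the norms of all possible weak limits of $h_k^{-1}r_k^{(n)}$ over all sequences $(h_k)\subset D$), nothing forces the full remainder $r_k=u_k-\sum_n g_k^{(n)}w^{(n)}$ to be $D$-weakly null: a greedy process may keep harvesting small profiles and never capture a fixed large one. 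Moreover, your concluding argument is logically reversed. For fixed $N$ in the non-terminating case, $r_k^{(N)}$ is precisely a sequence that is \emph{not} $D$-weakly null --- that is why a nonzero $w^{(N+1)}$ could be extracted from it --- so cocompactness cannot be applied to $r_k^{(N)}$, and indeed $\|r_k^{(N)}\|_p\not\to0$ in general, since $r_k^{(N)}$ still carries the tail profiles. What must be proved is $r_k\cw0$ for the \emph{infinite} remainder; in the standard scheme this follows from the maximality selection combined with $\|w^{(n)}\|\to0$ (a consequence of the summability estimate), and only then does cocompactness yield $\|r_k\|_p\to0$.

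The second gap is the Brezis--Lieb step. In this theorem $M$ is an abstract measure space and $D$ an abstract family of isometries of $X$ and of $L^p$; the decoupling $g_k^{(n)^{-1}}g_k^{(m)}\rightharpoonup0$ is weak (operator) convergence in $X$ and provides neither almost-everywhere convergence of $g_k^{(n)^{-1}}u_k$ to $w^{(n)}$ nor any asymptotic separation of supports, which is exactly what an iterated Brezis--Lieb argument needs. In the concrete cases (lattice shifts, rescalings) a.e.\ convergence comes from compact local imbeddings, but no such tool is available here; establishing $\sum_n\|w^{(n)}\|_p^p\le\liminf\|u_k\|_p^p$ and the uniform-in-$k$ $L^p$-convergence of the series is the substantive content that the survey defers to \cite{Cwiti2}, not bookkeeping. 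Your decoupling argument inside the induction is only sketched (it needs $g_k^{(m)^{-1}}r_k^{(n)}\rightharpoonup0$ and strong convergence of the adjoints, and it uses the dislocation property for products of gauges, which requires $D$ to be closed under such products), but that is the standard mechanism; likewise the metrization argument at the end is essentially correct, since $g_ku_k$ is bounded in $X$, the operators are $L^p$-isometries, and functionals from $L^{p'}$ restrict to a separating, hence norm-dense, subspace of $X^*$ when $X$ is reflexive.
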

As a corollary of this, with $\mathcal{D}^{1,p}(\R^N)\hookrightarrow L^\frac{pN}{N-p}(\R^{N})$
cocompactly relative to rescalings $u\mapsto 2^{rj}u(2^j(\cdot-y))$, $y\in\R^N$, $j\in\Z$, $r=\frac{N-p}{p}$, 
one has the profile decomposition of Solimini \cite{Solimini}. 
The orthogonality condition in Solimini's profile decomposition is \eqref{orth}. 
Cocompactness is proved in \cite{Solimini} for the imbedding into 
$L^{\frac{pN}{N-p},q}(\R^{N})$, $p<q\le\infty$, but on the bounded subsets of $\mathcal{D}^{1,p}(\R^N)$ convergence in all these
quasinorms is equivalent to that in $L^\frac{pN}{N-p}(\R^{N})$. 
\begin{rem}
In  Section 9.9 of \cite{ccbook} Solimini's profile decomposition is generalized to Sobolev spaces $\mathcal D^{1,2}$ of Carnot groups.
The proof of cocompactness there can be trivially extended, with suitable parameter changes, to $\mathcal D^{1,p}$ with general $p$, 
and then Solimini's profile decomposition for Carnot groups with anisotropic rescalings 
follows for any $p\in(1,\infty)$ from from Theorem~\ref{thm:Banach}. 
\end{rem}

\subsection{Wavelet bases and profile decompositions}
Profile decompositions for a cocompact imbedding of functional spaces of $\R^N$, relative to rescalings,
are established in \cite{BCK} (following a number of earlier results surveyed there,
starting with G\'erard's paper \cite{Gerard}) in the weak remainder form \eqref{wr} and 
with the asymptotic decoupling condition \eqref{orth}. 
Additional information about the profile decomposition is given there in form of
stability estimates, namely $\ell^{p}$ -bounds on the sequence $\lbrace\|w^{(n)}\|_{X}\rbrace_{n\in\N}$
in the  cases of Besov and Triebel-Lizorkin spaces, which indicates a possibility of a stronger remainder.
There is a recent work with an expressed objective to replace the weak remainder \eqref{wr} in the profile decomposition
with a strong remainder, Palatucci \& Pisante \cite{PP}, which accomplished this task for the profile decomposition of \cite{Gerard}, 
that is, for imbedding of the Bessel potential spaces $\dot H^s(\R^N)$, $s>0$. 

Profile decomposition of \cite{BCK} follows from the following assumptions on the function spaces $X\hookrightarrow Y$
of $\R^{N}$.
\begin{enumerate}
\item The norms of $X$ and $Y$ are invariant with respect to shifts $ $and
to dilations $t^{r}u(t\cdot)$ with some $r\in\R$;
\item There exists a function $\psi\in X$ such that the set $D\psi$ is
an unconditional Schauder basis on both $X$ and $Y$ where $D$ is
the set \eqref{eq:BCK-D};
\item The imbedding $X\hookrightarrow Y$ satisfies condition \eqref{eq:Assumption1} (which,
as we above, implies cocompactness of the imbedding whenever $X$ is reflexive);
\item There is a $C>0$ such that for any sequence $u_{k}$ bounded in $X$,
with expansion $u_{k}=\sum_{g\in D}c_{k}(g)g\psi$, whose coefficients
$c_{k}(g)$ converge, for each $g\in D$, to respective finite limits
$c(g)$ as $k\to\infty$, the series $\sum_{g\in D}c(g)g\psi$ converges
in $X$ with $\|\sum_{g\in D}c(g)g\psi\|_{X}\le C\liminf\|u_{k}\|_{X}$.
\end{enumerate}
Conditions above (and thus the profile decomposition \eqref{wr}) are verified in \cite{BCK} for all imbeddings listed in
Theorem~\ref{thm:bck} above. 
\subsection{Concentration in time-evolution problems}
Profile decompositions for sequences of initial data (bounded in respective
Sobolev norms) of dispersive evolution equations, such as nonlinear Schr\"odinger or wave equation, give rise to profile decomposition
of finite energy solutions with these data. This type of concentration analysis is usually called energy-critical and involves the usual
group of rescalings (see \cite{KenigMerle,Keraani,Gallagher-NS,Tao-multiattractors}), 
and, for a survey, \cite{KV}). While for a linear equation construction a ``time-evolved'' profile decomposition 
in the energy space of initial data is straightforward, further technical effort is unvolved in extending such decomposition to
solutions of equations with a non-linear term of critical growth. Below we quote a representative result from \cite{BahouriGerard}.

On the other hand, profile decomposition for sequences of data bounded in the Lebesgue norm, usually called mass-critical, 
involve a larger group of gauges. A profile decomposition by Terence Tao, based on the cocompactness of Strichartz 
imbedding in Theorem~\ref{thm:Tao}, is derived directly from Theorem~\ref{hilbcc}). 
We refer to  \cite{KV}) for further details.
\begin{thm}(\cite{BahouriGerard})
Let $\varphi_{k}$, $\psi_{k}$ be bounded sequences in, respectively,
$\mathcal{D}^{1,2}(\R^{3})$ and $L^{2}(\R^{3})$, satisfying the
vanishing at infinity condition 
\[
\lim_{R\to\infty}\limsup_{k\to\infty}\int_{|x|>R}(|\nabla\varphi_{k}|^{2}+|\psi_{k}|^{2})\mathrm{d}x=0.
\]
Let $u_{k}\in C(\R_{t},\mathcal{D}^{1,2}(\R_{x}))\cap L_{\mathrm{loc}}^{5}(\R_{t},L^{10}(\R^{3})$
with $\partial_{t}u_{k}\in C(\R_{t},L^{2}(\R_{x}^{3}))$ denote the
solution (whose existence and uniqueness were established by Shatah
and Struwe \cite{SS1,SS2}) of the equation 
\[
\Box u+|u|^{4}u=0
\]
 in $\R_{t}\times\R_{x}^{3}$, satisfying the initial condition $u(x,0)=\varphi_{k}\rightharpoonup\varphi$,
$\partial_{t}u(x,0)=\psi_{k}\rightharpoonup\psi$, and let $u$ be
an analogous solution with the initial data $\varphi$,$\psi$. Then,
for a renumbered subsequence, there exist sequences $t_{k}^{(n)}\in\R_{t},$
$x_{k}^{(n)}\in\R_{x}^{3}$, $\lambda_{k}^{(n)}>0$, and functions
$W^{(n)}(x,t)$ , $|||W^{(n)}(x,t)|||<\infty$, such that
\[
u_{k}(x,t)=u(x,t)+\sum_{n=1}^{\ell}\lambda_{k}^{(n)^{1/2}}W^{(n)}(\lambda_{k}^{(n)}(t-t_{k}^{(n)}),\lambda_{k}^{(n)}(x-x_{k}^{(n)}))+w_{k}^{(\ell)}+r_{k}^{(\ell)},
\]
where
\[
\lim_{\ell\to\infty}\limsup_{k\to\infty}\|w_{k}^{(\ell)}\|_{L^{5}(\R_{t},L^{10}(\R_{x}^{3}))}=0,
\]
 and
\[
\lim_{\ell\to\infty}\limsup_{k\to\infty}|||r_{k}^{(\ell)}|||=0,
\]
where 
\[
|||r|||=\sup_{t\in\R}\left(\int_{\R^{3}}(|\partial_{t}r(t,x)|^{2}+|\nabla_{x}r(t,x)|^{2})\mathrm{d}x\right)^{1/2}+\|r\|_{L^{5}(\R_{t},L^{10}(\R_{x}^{3}))}.
\]
Moreover, the elementary concentrations in the profile decomposition
are asymptotically decoupled in the sense that 
\[
\left|\log\frac{\lambda_{k}^{(m)}}{\lambda_{k}^{(n)}}\right|+(\lambda_{k}^{(m)}+\lambda_{k}^{(n)})|x_{k}^{(m)}-x_{k}^{(n)}|\to\infty\text{ whenever }m\neq n.
\]
In addition, the energy functional (the quadratic portion of $|||\cdot|||$)
is additive with regard to the terms in the decomposition.
\end{thm}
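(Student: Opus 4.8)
\section*{Proof proposal}

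The plan is to reduce the statement to the abstract Hilbert--space profile decomposition (Theorem~\ref{hilbcc}) applied to the Cauchy data, followed by propagation along the free wave flow and a long--time perturbation argument that installs the quintic nonlinearity. I would work in the energy space $H=\dot H^{1}(\R^{3})\times L^{2}(\R^{3})$ with $\|(\varphi,\psi)\|_{H}^{2}=\int_{\R^{3}}(|\nabla\varphi|^{2}+|\psi|^{2})$, and take as the dislocation set $D$ the group generated by the spatial translations $(\varphi,\psi)\mapsto(\varphi(\cdot-y),\psi(\cdot-y))$, the energy--critical dilations $(\varphi,\psi)\mapsto(\lambda^{1/2}\varphi(\lambda\,\cdot),\lambda^{3/2}\psi(\lambda\,\cdot))$, $\lambda>0$, and the free propagators $(\varphi,\psi)\mapsto S(-t)(\varphi,\psi)$, $t\in\R$ (a time shift is realized on data through the linear wave flow). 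These act isometrically on $H$ (the first two obviously, the propagator by conservation of the linear energy), and $D$ is a dislocation set in the sense of the definition preceding Theorem~\ref{hilbcc}: a sequence of inverses of such operators either has a pointwise convergent subsequence or tends weakly to zero, by dispersion of the free flow and non--compactness of translations and dilations. The associated cocompactness is the wave analogue of Theorem~\ref{thm:Tao}, namely cocompactness of the free--flow Strichartz imbedding $H\hookrightarrow L^{5}_{t}L^{10}_{x}(\R^{1+3})$ relative to $D$, which is likewise known.

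First I would invoke Theorem~\ref{hilbcc} for $\{(\varphi_{k},\psi_{k})\}$: after passing to a subsequence one obtains linear profiles $(\phi^{(n)},\psi^{(n)})\in H$, operators $g_{k}^{(n)}\in D$ encoded by parameters $\lambda_{k}^{(n)}>0$, $x_{k}^{(n)}\in\R^{3}$, $t_{k}^{(n)}\in\R$, with $g_{k}^{(1)}=\mathrm{id}$, the asymptotic orthogonality $g_{k}^{(n)^{-1}}g_{k}^{(m)}\rightharpoonup0$ for $m\neq n$, the Pythagorean identity $\|(\varphi_{k},\psi_{k})\|_{H}^{2}=\sum_{n}\|(\phi^{(n)},\psi^{(n)})\|_{H}^{2}+\|\rho_{k}^{(\ell)}\|_{H}^{2}+o(1)$ (whence $\sum_{n}\|(\phi^{(n)},\psi^{(n)})\|_{H}^{2}<\infty$), and a remainder $\rho_{k}^{(\ell)}$ with $\rho_{k}^{(\ell)}\cw0$, whose free evolution $w_{k}^{(\ell)}:=S(t)\rho_{k}^{(\ell)}$ satisfies $\lim_{\ell\to\infty}\limsup_{k\to\infty}\|w_{k}^{(\ell)}\|_{L^{5}_{t}L^{10}_{x}}=0$ by the stated cocompactness. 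Since $g_{k}^{(1)}=\mathrm{id}$, the profile $(\phi^{(1)},\psi^{(1)})$ is the weak limit $(\varphi,\psi)$; the vanishing--at--infinity hypothesis is then used to guarantee that, away from the concentration points, $u_{k}$ behaves like the fixed solution $u$, so that this non--dislocated profile gives rise exactly to the term $u$ in the statement. Unwinding the group--theoretic orthogonality into the parameters and normalizing each profile so that either $t_{k}^{(n)}\equiv0$ or $\lambda_{k}^{(n)}|t_{k}^{(n)}|\to\infty$ (absorbing the time--orthogonality into the remaining two contributions) yields the decoupling $|\log(\lambda_{k}^{(m)}/\lambda_{k}^{(n)})|+(\lambda_{k}^{(m)}+\lambda_{k}^{(n)})|x_{k}^{(m)}-x_{k}^{(n)}|\to\infty$.

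Next I would pass to nonlinear profiles. The first profile gives $u$ itself; for each concentrating profile, when $t_{k}^{(n)}\to\tau_{n}\in\R$ let $W^{(n)}$ solve $\Box W+|W|^{4}W=0$ with data at time $\tau_{n}$ matching the shifted $(\phi^{(n)},\psi^{(n)})$, and when $\lambda_{k}^{(n)}|t_{k}^{(n)}|\to\pm\infty$ let $W^{(n)}$ be the global solution asymptotic as $t\to\mp\infty$ to the free evolution of $(\phi^{(n)},\psi^{(n)})$, using the small--data/scattering theory and the finite--energy global theory (Shatah--Struwe \cite{SS1,SS2}). The bound $\sum_{n}\|(\phi^{(n)},\psi^{(n)})\|_{H}^{2}<\infty$ ensures all but finitely many profiles have energy below the small--data threshold, hence are global with small Strichartz norm, and a square--summability argument bounds the combined Strichartz size of the profile sum. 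I would then form the approximate solution
\[
\tilde u_{k}^{(\ell)}(x,t)=u(x,t)+\sum_{n=1}^{\ell}\lambda_{k}^{(n)^{1/2}}W^{(n)}\!\big(\lambda_{k}^{(n)}(t-t_{k}^{(n)}),\lambda_{k}^{(n)}(x-x_{k}^{(n)})\big)+w_{k}^{(\ell)}(x,t),
\]
check that its Cauchy data at $t=0$ differ from $(\varphi_{k},\psi_{k})$ by $o_{k}(1)$ in $H$ for each fixed $\ell$, and estimate the equation error $\Box\tilde u_{k}^{(\ell)}+|\tilde u_{k}^{(\ell)}|^{4}\tilde u_{k}^{(\ell)}$ in the dual Strichartz norm $L^{1}_{t}L^{2}_{x}$. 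This error splits into genuinely nonlinear cross--interactions between distinct bubbles — negligible by the parameter decoupling via a quantitative orthogonality estimate for the quintic nonlinearity in space--time Lebesgue norms — and the contribution of $w_{k}^{(\ell)}$, negligible because $\|w_{k}^{(\ell)}\|_{L^{5}_{t}L^{10}_{x}}\to0$. The long--time perturbation (stability) lemma for the energy--critical wave equation, resting on the a priori finite Strichartz bound from the global well--posedness theory, then converts the smallness of the data discrepancy and of the equation error into
\[
\lim_{\ell\to\infty}\limsup_{k\to\infty}|||\,u_{k}-\tilde u_{k}^{(\ell)}\,|||=0,
\]
so that $r_{k}^{(\ell)}:=u_{k}-\tilde u_{k}^{(\ell)}$ is the required strong remainder, the existence of $u_{k}$ on all of $\R_{t}$ being part of the conclusion. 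The additivity of the kinetic energy functional is read off at each fixed time from the orthogonality of the rescaled, translated profiles in $\dot H^{1}\times L^{2}$, again using the parameter decoupling, and from the fact that for the concentrating profiles the potential term $\tfrac16\|\cdot\|_{6}^{6}$ affects this quadratic part only to higher order.

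The principal obstacle is the last paragraph. The perturbation argument is only as strong as the available a priori control on the Strichartz norm of finite--energy solutions, so one must import the full energy--critical global well--posedness theory; and the error estimate genuinely requires harmonic analysis, namely quantitative decoupling of products of asymptotically orthogonal bubbles under the fifth--power nonlinearity, with the time parameter $t_{k}^{(n)}$ adding the subtlety that a profile may concentrate at a shifted time, so that near $t=0$ it must be replaced by a free wave (the linearizable regime) and handled via scattering rather than local well--posedness. By contrast, the linear profile decomposition of the data is a routine specialization of Theorem~\ref{hilbcc} once the wave--Strichartz cocompactness (analogue of Theorem~\ref{thm:Tao}) is granted.
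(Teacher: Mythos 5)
The survey does not prove this theorem: it is quoted verbatim as a representative result from \cite{BahouriGerard}, so there is no in-paper argument to compare yours against. Measured against the proof in the cited reference, your outline follows essentially the same strategy that Bahouri and G\'erard introduced and that has since become standard: a profile decomposition of the Cauchy data in $\dot H^1\times L^2$ compatible with the free wave flow, passage to nonlinear profiles via the Shatah--Struwe theory (local matching when the time parameters converge, scattering when $\lambda_k^{(n)}|t_k^{(n)}|\to\infty$), an approximate solution, and a long-time stability argument. The one structural difference is that you recast the linear step as an application of Theorem~\ref{hilbcc} plus a wave analogue of Theorem~\ref{thm:Tao}, whereas \cite{BahouriGerard} obtains the linear (space-time) decomposition directly from refined Sobolev/Strichartz inequalities for the free evolution; your recasting is in the spirit of this survey, and is legitimate provided the abstract hypotheses are actually verified.

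That proviso is where the real content sits, and your sketch treats it as known. Cocompactness of the map $(\varphi,\psi)\mapsto S(t)(\varphi,\psi)$ from $\dot H^1\times L^2$ into $L^5_tL^{10}_x$ relative to the group generated by translations, energy-critical dilations and the free propagators is not covered by Theorem~\ref{thm:Tao} (which is the mass-critical Schr\"odinger case in $L^2$, with Fourier translations in the group), and the verification that this propagator-augmented group is a dislocation set in the sense of the definition preceding Theorem~\ref{hilbcc} is itself a nontrivial dispersive estimate; both would have to be proved, and they are precisely the harmonic-analysis core of the result. Similarly, the perturbation step needs uniform global space-time bounds for finite-energy solutions of the quintic wave equation (Bahouri--Shatah type), which is more than the existence/uniqueness of \cite{SS1,SS2} that you cite. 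Two smaller soft spots: the vanishing-at-infinity hypothesis is invoked only vaguely (``away from the concentration points $u_k$ behaves like $u$''), whereas it plays a definite role in controlling the cores and in isolating the weak-limit term $u$; and your reduction of the group-theoretic orthogonality to the stated decoupling in $(\lambda,x)$ glosses over the case of profiles sharing scales and centers but separating in the time parameter, which the abstract orthogonality allows and which must be absorbed into the time-translated profiles explicitly. None of these is a wrong turn, but as written they are assertions rather than proof.
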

\subsection{Profile decompositions for the Moser-Trudinger imbedding.}
Profile decompositions for the Moser-Trudinger imbedding can be derived
from minor adaptations of Theorem~\ref{hilbcc} or Theorem~\ref{thm:Banach}.
For the Trudinger-Moser inequality for radial functions on the unit disk, we have 
the following result. We recall that $N'=\frac{N}{N-1}$
\begin{thm} (\cite{ATdO})
Let $u_{k}\rightharpoonup0$ in $H_{0}^{1}(B)$ or let $u_{k}\rightharpoonup0$
be a sequence of radial non-increasing functions in $W_{0}^{1,N}(B)$
for $N\ge3$. There exist sequences $s_{k}^{(n)}\to\infty$, and $w^{(n)}\in W_{0,\mathrm{rad}}^{1,N}(B)$
$n\in\N$, such that for a renumbered subsequence,
\begin{eqnarray*}
 &  & w^{(n)}=\mathrm{w-lim}\left({s_{k}^{(n)}}\right)^{-1/N'}u_{k}(r^{{s_{k}^{(n)}}}),\\
 &  & |\log(s_{k}^{(m)}/s_{k}^{(n)})|\to\infty\mbox{ for }n\neq m,\\
 &  & \sum_{n\in\N}\int_{B}|\nabla w^{(n)}|^{N}\mathrm{d}x\le\mathrm{\limsup}\int_{B}|\nabla u_{k}|^{N}\mathrm{d}x,\\
 &  & u_{k}-\sum_{n\in\N}\left({s_{k}^{(n)}}\right)^{1/N'}w^{(n)}(r^{{-s_{k}^{(n)}}})\to0 \text{ in } \exp L^{N'},
\end{eqnarray*}
and the series $\sum_{n\in\N}\left({s_{k}^{(n)}}\right)^{1/N'}w^{(n)}(r^{{s_{k}^{(n)}}})$
converges in $W_{0}^{1,N}(B)$ uniformly in $k$. 
\end{thm}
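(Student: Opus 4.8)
The plan is to obtain this statement from the abstract profile decomposition applied to a triple $(X,Y,D)$ with $Y=\exp L^{N'}(B)$, $D$ the inhomogeneous--dilation group of the Moser--Trudinger cocompactness theorem above, normalized so that $g_su(r)=s^{1/N'}u(r^{1/s})$ and hence $g_s^{-1}u(r)=s^{-1/N'}u(r^{s})$, and $X=H^1_{0,\mathrm{rad}}(B)$ when $N=2$, $X=W^{1,N}_{0,\mathrm{rad}}(B)$ when $N\ge3$. Three facts established in Section~2 are the input: $D$ is a dislocation set on $X$; the imbedding $X\hookrightarrow Y$ is $D$-cocompact; and every $g_s$ is a linear isometry of the Dirichlet functional $E(u):=\int_B|\nabla u|^N\,\mathrm dx$ (for $N=2$, $E$ is the square of the $X$-norm). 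It is convenient, though not strictly necessary, to pass first through the Emden--Fowler substitution $r=e^{-t}$: it identifies $X$ with $\dot W^{1,N}_0(0,\infty)$ (norm $\|U'\|_{L^N(0,\infty)}$, up to a fixed constant), identifies $D$ with the ordinary dilation group on the half-line, and carries $\exp L^{N'}(B)$ onto an exponential rearrangement space there, so the whole problem is a one-dimensional rescaling problem in disguise.

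For $N=2$ the proof is essentially a renaming: $X$ is a Hilbert space and $D$ a dislocation set, so Theorem~\ref{hilbcc} applies verbatim and yields the profiles $w^{(n)}$, the orthogonality $g_{s_k^{(n)}}^{-1}g_{s_k^{(m)}}\rightharpoonup0$ (which for this group is exactly $|\log(s_k^{(m)}/s_k^{(n)})|\to\infty$), the identity $\|u_k\|^2=\sum_n\|w^{(n)}\|^2+\|r_k\|^2+o(1)$ whence the stated inequality, the $X$-convergence of the profile series uniform in $k$, and $r_k\cw0$; $D$-cocompactness of $X\hookrightarrow\exp L^2(B)$ then gives $\|r_k\|_{\exp L^2}\to0$. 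Since $u_k\rightharpoonup0$, no profile can be extracted along an $s_k^{(n)}$ lying in a compact subset of $(0,\infty)$, so after replacing $g_{s_k^{(n)}}$ by its inverse if needed one has $s_k^{(n)}\to\infty$ for every $n$.

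For $N\ge3$ one follows the proof of Theorem~\ref{thm:Banach} with the $L^p$-bookkeeping replaced by bookkeeping for $E$. If $u_k\cw0$, cocompactness already gives $\|u_k\|_{\exp L^{N'}}\to0$; otherwise extract, along a renamed subsequence, $s_k^{(1)}\to\infty$ and $w^{(1)}=\wlim g_{s_k^{(1)}}^{-1}u_k\ne0$, put $v_k^{(1)}=u_k-g_{s_k^{(1)}}w^{(1)}$ (so $g_{s_k^{(1)}}^{-1}v_k^{(1)}\rightharpoonup0$), and iterate. The decoupling $|\log(s_k^{(m)}/s_k^{(n)})|\to\infty$ is forced: if it failed, the dislocation property would give a subsequence along which $g_{s_k^{(n)}}^{-1}g_{s_k^{(m)}}$ converges strongly to an isometry, making $w^{(n)}$ inherit a nonzero piece of $w^{(m)}$, contrary to $w^{(n)}$ being extracted from a residual already decoupled from the earlier profiles. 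The key step is the energy splitting: using that $\nabla(g_{s_k^{(n-1)}}^{-1}v_k^{(n-1)})\to\nabla w^{(n)}$ along a subsequence in a strong enough sense, the Brezis--Lieb lemma in $L^N$ gives $E(v_k^{(n)})=E(v_k^{(n-1)})-E(w^{(n)})+o(1)$, hence $\sum_nE(w^{(n)})\le\limsup_kE(u_k)$ and in particular $E(w^{(n)})\to0$; uniform convexity of $L^N$ together with the decoupling then makes the partial sums $\sum_{n\le m}g_{s_k^{(n)}}w^{(n)}$ Cauchy in $X$ uniformly in $k$, so the series converges in $X$ uniformly in $k$. Finally $r_k:=u_k-\sum_ng_{s_k^{(n)}}w^{(n)}$ satisfies $r_k\cw0$ (for any $t_k$, the operator $g_{t_k}^{-1}g_{s_k^{(n)}}$ is either asymptotically null, contributing nothing, or strongly convergent, in which case the matching profile has already been removed, while the truncation error is controlled by $E(w^{(n)})\to0$), so $D$-cocompactness of $W^{1,N}_{0,\mathrm{rad}}(B)\hookrightarrow\exp L^{N'}(B)$ upgrades this to $\|r_k\|_{\exp L^{N'}}\to0$, which is the asserted convergence of the remainder.

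The main obstacle is this energy-splitting step for $N\ge3$: $\int_B|\nabla u|^N$ is neither a Hilbert norm nor the $L^p$-norm of $u$ itself, so neither Theorem~\ref{hilbcc} nor Theorem~\ref{thm:Banach} applies off the shelf, and the iteration must be re-run with the Brezis--Lieb lemma for gradients and the uniform convexity of $L^N$ in place of the parallelogram law. Pinning down the pointwise behaviour of the radial derivatives that makes Brezis--Lieb applicable, and with it the uniform-in-$k$ summability of the profiles, is exactly where the hypothesis that the $u_k$ are radial non-increasing enters; once that is secured, the passage to the $\exp L^{N'}$-remainder is immediate from the cocompactness already established in Section~2.
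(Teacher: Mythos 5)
Your overall route is the one the survey itself indicates: the paper gives no proof of this theorem, citing \cite{ATdO} and saying only that it follows from ``minor adaptations'' of Theorem~\ref{hilbcc} (for the Hilbert case) or Theorem~\ref{thm:Banach}, combined with the cocompactness of Section~2.5; your $N=2$ argument is exactly that, and you even use the correct isometric normalization $g_s^{-1}u=s^{-1/N'}u(r^s)$ (the displayed group in Section~2.5 has the exponent sign garbled). However, two steps in your write-up are genuinely incomplete. First, the reduction to $s_k^{(n)}\to\infty$ is not achieved by ``replacing $g_{s_k^{(n)}}$ by its inverse if needed'': in the abstract decomposition over the full group $\{g_s\}_{s>0}$ the parameter sequences may tend to $0$, and such sequences can carry \emph{nonzero} profiles even when $u_k\rightharpoonup0$ (take, in Emden--Fowler variables $t=\log\frac1r$, $U_k(t)=t_k^{1/N'}W(t/t_k)$ with $t_k\to0$: it is weakly null but $g_{t_k}U_k=W\neq0$). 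Taking inverses changes which weak limit is being computed, so these terms cannot be relabelled away; what saves the statement is that the corresponding elementary concentrations tend to zero in $\exp_NL^{N'}(B)$ (their Zygmund quasinorm vanishes, using $W(\tau)/\tau^{1/N'}\to0$), so they can be discarded into the $Y$-remainder. That argument, or some substitute for it, is missing from your proof.

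Second, for $N\ge3$ the energy bookkeeping is asserted rather than proved. Theorem~\ref{thm:Banach} does not apply (as you note), and the claimed splitting $E(v_k^{(n)})=E(v_k^{(n-1)})-E(w^{(n)})+o(1)$ via Brezis--Lieb requires a.e.\ (or comparably strong) convergence of the rescaled gradients, which does not follow from weak convergence in $W_0^{1,N}$; you state that this is ``exactly where the radial non-increasing hypothesis enters'' but give no mechanism by which it does. The same unproved convergence is what underlies your claims that the decoupling forces the profiles extracted from residuals to be genuinely new, that $\sum_nE(w^{(n)})\le\limsup_kE(u_k)$, and that the partial sums are Cauchy in $X$ uniformly in $k$; this is precisely the content that makes the adaptation of the iteration nontrivial for $p=N\neq2$, and it is the part of the proof of \cite{ATdO} that your proposal leaves open. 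For $N=2$, by contrast, your argument is complete once the $s_k\to0$ issue above is handled, since Theorem~\ref{hilbcc} and the cocompactness of $W^{1,N}_{0,\mathrm{rad}}(B)\hookrightarrow\exp L^{N'}(B)$ do the work.
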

\begin{rem}
An immediate generalization of this result to the radial subspace of $W^{1,N}(\R^N)$ can be stated in form of the decomposition above
for $u_k-u_k(1)\in H_{0}^{1}(B)$ with vanishing of $u_k(r)$ for $r>1$. Such decomposition is presented for $N=2$ (with an 
additional assumption on the sequence and the weak remainder) 
in \cite{BMM1}, which is very similar to \cite{ATdO} and is mentioned here only on the merit of
details on vanishing of a sequence $u_k\rightharpoonup0$ in $H^1_{0,\mathrm{rad}}$ for $r>1$. A strongly vanishing remainder is, nonetheless,
immediate.
\end{rem}
Without the assumption of radiality, profile decompositions for the Trudinger-Moser case are known when $N=2$. 
The result for the bounded domain is contained in \cite{AT}. We quote it below with a trivially refined (in view of
Corollary~3.2, \cite{ccbook}) energy estimate.
Note that the concentration profiles are always radial, even when the original
sequence $u_{k}$ is not. Indeed, when the concentration profiles
are defined as weak limits of
$j_k^{-1/2}u_k(z^{j_k}$ with $j_k\to\infty$, it is obvious that they will be
symmetric with respect to discrete rotations by an arbitrarily small angle, i. e.  radially symmetric.
\begin{thm}
\label{thm:AdiTi}
Let $\Omega\subset\R^{2}$ be a bounded domain and let $u_{k}\rightharpoonup0$
in $H_{0}^{1}(\Omega)$. There exist sequences $j_{k}^{(n)}\in\N$,
$j_{k}^{(n)}\to\infty,$ $z_{k}^{(n)}\in\Omega$, and $w^{(n)}\in W_{0}^{1,N}(B)$
$n\in\N$, such that for a renumbered subsequence,
\begin{eqnarray*}
 &  & w^{(n)}(|z|)=\mathrm{w-lim}\left({j_{k}^{(n)}}\right)^{-1/2}u_{k}(z_{k}^{(n)}+z^{j_{k}^{(n)}}),\\
 &  & z_{m}\neq z_{n}\mbox{ or }|\log j_{k}^{(m)}-\log j_{k}^{(n)}|\to\infty\mbox{ whenever }n\neq m,\\
 &  & \|\nabla u_{k}\|_2^{2}=\sum_{n\in\N}\int_{B}|\nabla w^{(n)}|^{2}dx+\|\nabla r_k\|_2^2+o(1)\\
 \text{ where} &&\\
 &  &r_k:=u_{k}-\sum_{n\in\N}{j_{k}^{(n)}}^{1/2}w^{(n)}(|z-z_{n}|^{1/j_{k}^{(n)}})\cw0,
\end{eqnarray*}
(with the latter convergence equivalent, for bounded sequences in
$H_{0}^{1}$ to convergence in $\exp L^{2})$, and the series $\sum_{n\in\N}{j_{k}^{(n)}}^{1/2}w^{(n)}(|z-z_{n}|^{1/j_{k}^{(n)}})$
converges in $H_{0}^{1}$ uniformly in $k$. 
\end{thm}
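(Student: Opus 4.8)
The plan is to deduce Theorem~\ref{thm:AdiTi} from the abstract Hilbert-space profile decomposition, Theorem~\ref{hilbcc}, applied to the sequence $u_k$ in $H=H_0^1(\Omega)$, once we identify the correct dislocation set. First I would embed $H_0^1(\Omega)$ isometrically into $H_0^1(B')$ for a large disk $B'\supset\Omega$ (extending by zero), so that all the competing transformations act on a common domain; the relevant operators are the $z\mapsto z^j$-type maps \eqref{eq:zj} composed with translations $z\mapsto z-z_0$, i.e.\ $g:u\mapsto j^{-1/2}u((\cdot-z_0)^j)$, and I would check that this set $D$, closed under pointwise operator limits, is a dislocation set in the sense of the definition preceding Theorem~\ref{hilbcc}: a sequence $g_k^{-1}$ either has parameters $j_k$ bounded (then, along a subsequence, $j_k$ is eventually constant and $z_k\to z_0$, giving pointwise convergence) or $j_k\to\infty$, in which case $g_k^{-1}u\to 0$ in $H_0^1$ for every $u$ because $\|\nabla(j^{-1/2}u(z^j))\|_2^2=\|\nabla u\|_2^2$ is conformally invariant while the functions spread out to constants. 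This is essentially the content of Theorem~2.11 (the cocompactness of $H_0^1(B)\hookrightarrow\exp L^2(B)$ relative to exactly these transformations) combined with the translation group, and I would cite it for the hard analytic input.

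With $D$ in hand, Theorem~\ref{hilbcc} directly delivers a renumbered subsequence, sequences $g_k^{(n)}\in D$ with $g_k^{(1)}=\mathrm{id}$, profiles $w^{(n)}\in H_0^1$, the weak-limit identities $g_k^{(n)^{-1}}u_k\rightharpoonup w^{(n)}$, the asymptotic orthogonality $g_k^{(n)^{-1}}g_k^{(m)}\rightharpoonup 0$, the energy identity $\|\nabla u_k\|_2^2=\sum_n\|\nabla w^{(n)}\|_2^2+\|\nabla r_k\|_2^2+o(1)$, the $D$-weak vanishing of $r_k$, and uniform-in-$k$ convergence of the profile series in $H_0^1$. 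It remains to translate each abstract item into the concrete statement: writing $g_k^{(n)}$ via its parameters $(j_k^{(n)},z_k^{(n)})$ gives the formula $w^{(n)}(|z|)=\operatorname{w-lim}(j_k^{(n)})^{-1/2}u_k(z_k^{(n)}+z^{j_k^{(n)}})$ and the representation of $r_k$; the asymptotic orthogonality $g_k^{(n)^{-1}}g_k^{(m)}\rightharpoonup 0$ must be unpacked into the stated dichotomy $z_m\neq z_n$ or $|\log j_k^{(m)}-\log j_k^{(n)}|\to\infty$ (after passing to a further subsequence so that $z_k^{(n)}\to z_n$ exists), which is the one genuinely computational lemma — one shows that if both $z_k^{(m)},z_k^{(n)}$ converge to the same limit and $\log(j_k^{(m)}/j_k^{(n)})$ stays bounded then $g_k^{(n)^{-1}}g_k^{(m)}$ has a nonzero pointwise limit, contradicting weak-to-zero. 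Finally, the radiality of $w^{(n)}$ is the elementary observation already noted in the text: $g_k^{(n)^{-1}}u_k$ is, up to the translation, invariant under rotation by $2\pi/j_k^{(n)}\to 0$, so its weak limit is rotation-invariant.

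The last point is the equivalence of $r_k\cw 0$ with $\|r_k\|_{\exp L^2}\to 0$ on bounded sets of $H_0^1$; here I would invoke Theorem~2.11 again (cocompactness gives one direction) together with the scaling/translation invariance of the $\exp L^2$-quasinorm under $D$ and density of $H_0^1$ in a suitable sense for the reverse, exactly as the metrization halves of Proposition~\ref{prop:radial} and Theorem~\ref{thm:Banach} are argued; since $r_k$ is bounded in $H_0^1$ (by the energy identity), this applies. The main obstacle is the careful bookkeeping of the two-parameter orthogonality condition — making sure the diagonal/subsequence extraction over both $z_k^{(n)}$ and $j_k^{(n)}$ is consistent across all pairs $(m,n)$ and that the resulting dichotomy is precisely the stated one — but this is routine once the dislocation-set property is established, and the only hard analysis (the cocompactness estimate itself) is imported wholesale from Theorem~2.11.
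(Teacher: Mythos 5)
Your overall strategy is the one the paper itself points to: the survey gives no self-contained proof of this theorem, but quotes it from \cite{AT} (with an energy estimate refined ``in view of Corollary~3.2, \cite{ccbook}''), states that Moser--Trudinger profile decompositions ``can be derived from minor adaptations of Theorem~\ref{hilbcc}'', takes the cocompactness input from the result you call Theorem~2.11, and argues the radiality of the profiles exactly as you do. So the route (abstract Hilbert-space decomposition plus the \cite{AT} cocompactness plus unpacking the parameters $(j_k^{(n)},z_k^{(n)})$) is the intended one.

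The genuine gap is your claim that Theorem~\ref{hilbcc} ``directly delivers'' the conclusion once the dislocation property is checked. The gauges $u\mapsto j^{-1/2}u(z_0+z^{j})$ are isometric for the gradient norm (the factor $j^{-1/2}$ compensates the $j$-fold covering $z\mapsto z^{j}$), but they are not surjective operators on any fixed Hilbert space: the range consists of functions invariant under rotation by $2\pi/j$ about the core, and the map $z\mapsto z_0+z^{j}$ does not send a larger disk $B'$ into itself, so extending $H_0^1(\Omega)$ to $H_0^1(B')$ does not turn these maps into endomorphisms, let alone invertible ones. The paper's definition of a dislocation set requires \emph{surjective} isometries, and the decoupling condition ${g_k^{(n)}}^{-1}g_k^{(m)}\rightharpoonup0$ presupposes inverses defined on all of $H$. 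Your own verification betrays the difficulty: you assert $g_k^{-1}u\to0$ strongly in $H_0^1$ when $j_k\to\infty$, which is impossible for inverses of isometries; what one can hope for is weak vanishing of the adjoints (left inverses defined on the ranges). This is precisely why the survey speaks of ``minor adaptations'' of Theorem~\ref{hilbcc} (or of the quasi-isometric version, Theorem~3.1 of \cite{ccbook}) rather than a direct application: one must rerun the iterative profile extraction with these one-sided isometries, define the decoupling via adjoints, and also justify that the profiles land in $H_0^1(B)$ even though $u_k(z_k^{(n)}+z^{j})$ need not vanish on $\partial B$ for finite $j$. Supplying that adaptation (or importing it from \cite{AT}) is the missing step; the remaining bookkeeping you describe --- the joint subsequence extraction, the dichotomy $z_m\neq z_n$ or $|\log j_k^{(m)}-\log j_k^{(n)}|\to\infty$, the radiality of the $w^{(n)}$, and the identification of $D$-weak vanishing of $r_k$ with $\exp L^{2}$-convergence via cocompactness and metrization --- is consistent with the paper.
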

Recently, Bahouri, Majdoub and Masmoudi \cite{BMM} announced the following result, partly 
extending Theorem~\ref{thm:AdiTi} to $\Omega=\mathbb{R}^{2}$.
\begin{thm}
\label{BMM} Let $u_k\rightharpoonup 0$ in
$H^1_{\mathrm{rad}}(\R^2)$ such that 
\begin{equation} 
\limsup_{k\to\infty}\|u_k\|_{\exp_1 L^2}>0, \quad \quad
\mbox{and} \end{equation} 
\begin{equation}  \lim_{R\to\infty}
\limsup_{k\to\infty}\|u_k\|_{L^2(|x|>R)}=0. \end{equation} Then, there
exists a sequence of asymptotically 
orthogonal rescalings $s_k^{(n)},y^{(n)}$, and a sequence of profiles $w^{(n)}$ such that, 
up to a subsequence extraction, for all
$\ell\geq 1$, the following asymptotic relation holds true:
\begin{equation} \label{decomp}
u_n(x)=\sum_{n=1}^{\ell}\,{s_k^{(n)}}^{-1/2}w^{(n)}(|x-y^{(n)}|^{s^{(n)}_k})+{\rm
r}_n^{(\ell)}(x),\quad\limsup_{k\to\infty}\;\|{\rm
r}_k^{(\ell)}\|_{\exp_1 L^2}\stackrel{\ell\to\infty}\longrightarrow 0. \end{equation} 
Moreover, 
\begin{equation} \label{ortogonal} \|\nabla
u_k\|_{L^2}^2=\sum_{j=1}^{\ell}\,\|{w^{(n)}}'\|_{L^2}^2+\|\nabla
{\rm r}_n^{(\ell)}\|_{L^2}^2+\circ(1),\quad k\to\infty. \end{equation}
\end{thm}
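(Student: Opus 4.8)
The plan is to read the statement as a particular case of the abstract Hilbert--space profile decomposition of Theorem~\ref{hilbcc}, applied after a reduction from $\R^2$ to the unit disk $B$, combined with the cocompactness of the radial Moser--Trudinger imbedding $W^{1,2}_{0,\mathrm{rad}}(B)\hookrightarrow\exp L^{2}(B)$ relative to the inhomogeneous dilations (the theorem of \cite{ATdO} stated just before Corollary~\ref{2.10}). The two hypotheses on $\{u_k\}$ play complementary roles: the $L^{2}$-tail condition $\lim_{R\to\infty}\limsup_{k\to\infty}\|u_k\|_{L^{2}(|x|>R)}=0$ is exactly what lets one pass from $\R^2$, where the Moser--Trudinger inequality (and hence this cocompactness) fails for the gradient norm, to $B$, where it holds; and $\limsup_{k\to\infty}\|u_k\|_{\exp_1 L^2}>0$ guarantees that the decomposition produced has a nonzero profile.

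\emph{Localisation to $B$.} From $u_k\rightharpoonup 0$ in $H^1(\R^2)$ and radiality one gets $u_k(1)\to 0$, since the boundary traces on $\{|x|=1\}$ are the constants $u_k(1)$ and $H^{1/2}(S^1)\hookrightarrow L^2(S^1)$ compactly. Using the radial (Strauss) pointwise bound, Rellich on annuli, and the tail hypothesis, one checks that $u_k\to 0$ in $L^2(\R^2)$ and $\|u_k\|_{\exp_1 L^2(|x|>1)}\to 0$ (on $\{|x|>1\}$ the functions $u_k$ are uniformly bounded, so $\exp_1(|u_k|^2)\lesssim|u_k|^2$ there). Hence $v_k$, the restriction to $B$ of $u_k-u_k(1)$, is bounded in $H:=W^{1,2}_{0,\mathrm{rad}}(B)$ with the Dirichlet inner product, converges weakly to $0$ in $H$, and satisfies $\limsup_k\|v_k\|_{\exp L^2(B)}=\limsup_k\|u_k\|_{\exp_1 L^2(\R^2)}>0$. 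It then suffices to produce the decomposition for $v_k$ and, at the end, to reinstate the constant $u_k(1)$ and the exterior piece $u_k\mathbf{1}_{\{|x|>1\}}$ inside the remainder, which is legitimate because those two pieces are uniformly $\exp L^2$-small and contribute to the Dirichlet energy only the quantity $\|\nabla u_k\|^2_{L^2(|x|>1)}$, which is absorbed into $\|\nabla\mathrm r_k^{(\ell)}\|^2_{L^2}$.

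\emph{Abstract profile decomposition and upgrade of the remainder.} Put $D=\{g_s\}_{s>0}$ with $g_s u(r)=s^{-1/2}u(r^s)$; the substitution $\rho=r^s$ shows each $g_s$ is a surjective isometry of $H$, with $g_ag_b=g_{ab}$ and $g_s^{-1}=g_{1/s}$. As recorded in \cite{ATdO}, $D$ is a dislocation set: it is closed under strong operator convergence, and a sequence $g_{s_k}$ converges strongly when the $s_k$ stay in a compact subset of $(0,\infty)$ and converges weakly to $0$ when $s_k\to 0$ or $s_k\to\infty$. Theorem~\ref{hilbcc} applied to $v_k$ in $H$ with this $D$ yields a subsequence, gauges $g_{s_k^{(n)}}\in D$, and profiles $w^{(n)}\in H$ (the weak limits of $g_{s_k^{(n)}}^{-1}v_k$, i.e.\ of suitable inhomogeneous rescalings of $v_k$), with the decoupling $g_{s_k^{(n)}}^{-1}g_{s_k^{(m)}}=g_{s_k^{(m)}/s_k^{(n)}}\rightharpoonup 0$ for $m\ne n$ --- equivalently $|\log s_k^{(m)}-\log s_k^{(n)}|\to\infty$ by the dislocation property; the energy identity $\|\nabla v_k\|_{L^2}^2=\sum_n\|\nabla w^{(n)}\|_{L^2}^2+\|\nabla r_k\|_{L^2}^2+o(1)$ with $r_k=v_k-\sum_n g_{s_k^{(n)}}w^{(n)}$, the series converging in $H$ uniformly in $k$; and $r_k\cw 0$. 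Since $H\hookrightarrow\exp L^2(B)$ is $D$-cocompact, the last clause of Theorem~\ref{hilbcc} gives $\|r_k\|_{\exp L^2(B)}\to 0$ --- a strong remainder, a fortiori the required $\lim_{\ell\to\infty}\limsup_{k\to\infty}\|\mathrm r_k^{(\ell)}\|_{\exp_1 L^2}=0$.

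\emph{Return to $\R^2$; location of the obstacle.} Rewriting the $n$-th elementary concentration as $\bigl(g_{s_k^{(n)}}w^{(n)}\bigr)(r)=(s_k^{(n)})^{-1/2}w^{(n)}(r^{s_k^{(n)}})$ (a function supported in $\overline B$), transporting to $\R^2$, and absorbing $u_k(1)$ and $u_k\mathbf{1}_{\{|x|>1\}}$ into the remainder as above, one obtains \eqref{decomp} (the cores are at the origin, and the translation parameter $y^{(n)}$ of the statement is vacuous in the radial case, since a radial sequence cannot carry Moser--Trudinger concentration off the origin), while $\|\nabla u_k\|^2_{L^2(\R^2)}=\|\nabla v_k\|^2_{L^2(B)}+\|\nabla u_k\|^2_{L^2(|x|>1)}$ together with the energy identity of the previous step gives \eqref{ortogonal}, after identifying the Dirichlet energy of a radial profile with $\|{w^{(n)}}'\|_{L^2}^2$ in the customary one--variable normalisation. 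Given that the hard harmonic--analytic ingredient --- cocompactness of the radial Moser--Trudinger imbedding --- is supplied by \cite{ATdO}, the only genuinely delicate part is the localisation: one must be sure that no $\exp_1 L^2$-concentration and no ``lost'' Dirichlet energy escapes to infinity or into the constant $u_k(1)$, which is exactly the content of, and the reason for, the two hypotheses imposed on $\{u_k\}$.
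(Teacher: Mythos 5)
First, a point of reference: the survey does not prove Theorem~\ref{BMM} at all --- it is quoted as a result announced by Bahouri, Majdoub and Masmoudi \cite{BMM}, so there is no ``paper's own proof'' to match. What you propose is, however, precisely the alternative derivation that the survey itself advocates around this statement: localisation to the unit disk via $v_k=(u_k-u_k(1))|_B$ as in Corollary~\ref{2.10} and in the Remark following the radial Trudinger--Moser profile decomposition of \cite{ATdO}, followed by the abstract Hilbert-space decomposition of Theorem~\ref{hilbcc} for the dislocation group $g_su(r)=s^{-1/2}u(r^s)$ together with the $D$-cocompactness of $W^{1,2}_{0,\mathrm{rad}}(B)\hookrightarrow\exp L^{2}(B)$. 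Your sketch is sound: the trace/Rellich argument for $u_k(1)\to0$, the use of the tail hypothesis plus the radial uniform bound on $\{|x|\ge1\}$ to kill the exterior piece in the Orlicz norm, the isometry computation (your exponent $s^{-1/2}$ is the correct one; the exponent in the survey's statement of the \cite{ATdO} theorem is a misprint), and the energy bookkeeping $\|\nabla u_k\|_{L^2(\R^2)}^2=\|\nabla v_k\|_{L^2(B)}^2+\|\nabla u_k\|_{L^2(|x|>1)}^2$ with the exterior energy absorbed into the remainder. Your route in fact buys more than the statement asks: a remainder vanishing strongly in $\exp L^2$, consistent with the survey's comment that ``a strongly vanishing remainder is, nonetheless, immediate,'' whereas \eqref{decomp} only claims the weak, $\ell$-truncated form. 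Two small points to tighten rather than gaps: Theorem~\ref{hilbcc} gives the norm identity with the full series and full remainder, so to get \eqref{ortogonal} for finite $\ell$ you should expand $\|\nabla r_k^{(\ell)}\|_2^2=\|\nabla r_k\|_2^2+\sum_{n>\ell}\|\nabla w^{(n)}\|_2^2+o(1)$ using asymptotic orthogonality and the uniform convergence of the series; and the asserted equality $\limsup_k\|v_k\|_{\exp L^2(B)}=\limsup_k\|u_k\|_{\exp_1L^2(\R^2)}$ should only be claimed up to $o(1)$ corrections --- positivity, which is all that is needed to guarantee a nonzero profile, does follow. Your dismissal of the cores $y^{(n)}$ is justified by the uniform bound on radial $H^1$ functions away from the origin.
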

In addition to inhomogeneous dilations $u\mapsto j^{-1/2}u(z^j)$, $j\in\Z$ of the unit disk, the gradient norm $\|\nabla u\|_2$
on the unit disk is preserved by M\"obius transformations, $u(z)\mapsto u(\frac{z-\zeta}{1-\zeta\bar z})$, $\zeta\in B$. 
This has a geometric meaning of isometries of the hyperbolic plane represented by the Poincar\'e  disk coordinates, where $\|\nabla u\|^2_2$
represents the quadratic form of the Laplace-Beltrami operator. 
This puts concentration compactness relative to such ``translations'' into the framework of Sobolev spaces on periodic manifolds, discussed
in Section 2 (and based on Lemma~9.4, \cite{ccbook}), with the profile decomposition given by Theorem~\ref{hilbcc}. 
Note only that application of Lemma~9.4, \cite{ccbook}, gives cocompactness of the imbedding of $H_0^1(B)\hookrightarrow L^q(B,\mu)$, $q\in(2,\infty)$,
where $d\mu=\frac{4dx}{(1-r^2)^2}$ is the Riemannian measure of the hyperbolic plane in the disk coordinates, 
and that equivalent metrizations of $D$-weak convergence include not only $L^q$ norms, but any Orlicz norms $\psi L(B,\mu)$ associated with the 
even convex functions $\psi$ such that $\psi(t)/t^2\to 0$ when $t\to 0$ and $\log\psi(t)/t^2\to 0$ when $t\to\infty$. 
See \cite{ATsub} for details.
\section{Cocompactness and profile decompositions by reduction}
Condition of invariance with respect to a non-compact group is of exceptional kind, and so it is important to 
consider the implications of cocompactness on spaces without such invariance.
In many cases it is advantageous to see a Banach space $X_0$ as a subspace of a larger space $X$,
that admits a profile decomposition. This profile decomposition, reduced to sequences in $X_0$,
may take a significantly simplified form, as we see from the examples below. Furthermore, there are situations
when restriction of a cocompact imbedding to a subspace results in a compact imbedding.
\par
\begin{ex}
Consider the limit Sobolev imbedding $\mathcal{D}^{1,p}(\R^{N})\hookrightarrow L^{\frac{pN}{N-p}}(\R^{N})$
and let $u_{k}$ be a sequence bounded in the $W^{1,p}(\R^{N})$-norm.
For any such sequence the $L^{p}$-bound implies that $t_{k}^{\frac{N-p}{p}}u_{k}(t\cdot)\rightharpoonup0$
whenever with $t_{k}\to0$, and consequently, a renamed subsequence
of $u_{k}$ can be written as a sum of translations $w^{(n)}(\cdot-y_{k}^{(n)})$
plus a sum of dilations (at variable cores $y_{k}$) with $t_{k}\to\infty$,
which, in turn vanishes in $L^{q}$, $q\in(p,\frac{pN}{N-p})$, plus a remainder
that vanishes in $L^{\frac{pN}{N-p}}$ (and is still bounded in $L^{p}$, and thus also vanishes in $L^{q}$).
\par
In other words, by considering
$W^{1,p}$ as a subspace of $\mathcal{D}^{1,p}$, we derived a profile decomposition in $W^{1,p}$, with translations as only gauges
and with a remainder vanishing in $L^q$, $q\in(p,\frac{pN}{N-p})$.
\end{ex}
\subsection{Transitivity and interpolation}
Cocompactness of imbeddings is often possible to infer by means of
the following elementary arguments. 
\textbf{A.} Let $D$ be a set of isometries on three nested Banach
spaces: $X\hookrightarrow Y$ and $Y\hookrightarrow Z$, and one of
the two imbeddings is cocompact. Let $g_{k}u_{k}\rightharpoonup0$
in $X$ for any $g_{k}\in D$. If the first imbedding is cocompact,
then $u_{k}\to0$ in $Y$ and thus $u_{k}\to0$ in $Z$. If the
second imbedding is cocompact, then, by continuity of the first imbedding,
$g_{k}u_{k}\rightharpoonup0$ in $Y$ and thus $u_{k}\to 0$ in
$Z$.

\textbf{B.} Let $X\hookrightarrow Y_{0}$, $X\hookrightarrow Y_{1}$,
and assume that the first imbedding is cocompact. If the convergence in
$Y_{0}$ and $Y_{1}$, for sequences bounded in $X$, is equivalent, then
obviously, the second imbedding is cocompact as well. 

\textbf{C.} Let $X\hookrightarrow Y_{0}$, $X\hookrightarrow Y_{\alpha}$,
and assume that the first imbedding is cocompact. If for any $u\in X$,
$\|u\|_{Y_{\alpha}}\le C\|u\|_{Y_{0}}^{\alpha}\|u\|_{X}^{1-\alpha}$
with some $\alpha\in(0,1)$, then $X$ is cocompactly imbedded
into $Y_{\alpha}$. Indeed, $ $the inequality implies that $Y_{0}\cap B^{X}\hookrightarrow Y_{\alpha}\cap B^{X}$.

In particular, two known proofs of cocompactness of the Sobolev imbedding
$\mathcal{D}^{1,p}(\mathbb{R}^{N})\hookrightarrow L^{p^{*}}$ use
the argument \textbf{C}, by first establishing cocompactness of imbedding
of $\mathcal{D}^{1,p}$ into a Besov space $\dot{B}^{1-N/p,\infty;\infty}$
(Jaffard \cite{Jaffard}) or a Marcinkiewicz space $L^{\frac{pN}{N-p},\infty}$ (Solimini \cite{Solimini}).
\vskip5mm\par
In general, one would also expect that, given a common set of gauges,
interpolation of two imbeddings, $X_{0}\hookrightarrow Y_{0}$, and
$X_{1}\hookrightarrow Y_{1}$, results in a cocompact imbedding, if
one of this imbeddings is cocompact. We refer to \cite{CwiTi}
where a more specific statement is proved, under additional conditions, for
functional spaces of $\R^{N}$, which is then applied to verify that
subcritical imbeddings of Besov spaces
\begin{description}
\item [{$B_{p,q}^{s}(\R^{N})\hookrightarrow B_{p_{1},q_{1}}^{s_{1}}(\R^{N})$,}] $q_{1}\ge q$,
$0<\frac{1}{p}-\frac{1}{p_{1}}<\frac{s-s_{1}}{N}$, $s_{1}\ge0$. 
\end{description}
are cocompact with respect to lattice shifts $D=\lbrace u\mapsto u(\cdot-y)\rbrace_{y\in\Z^{N}}$.
This result can be also deduced from cocompactness of homogeneous
Besov spaces mentioned in Section 2, by means of the reduction method
below.

\subsection{Reduction to a subspace}

\begin{ex}
Reduction of Solimini's profile decomposition for $\mathcal{D}^{1,p}(\R^{N})\hookrightarrow L^\frac{pN}{N-p}$
to the subspace of radial functions eliminates from the profile decomposition all translations
and leaves only the concentrations with dilations about the origin. This can be also obtained 
directly from Proposition~\ref{prop:radial} and Theorem~\ref{thm:Banach} above.
\end{ex}
\begin{ex} If $\Omega\subset\R^{N}$
is a  bounded domain, one obtains a profile decomposition 
for the imbedding $W_{0}^{1,p}(\Omega)\hookrightarrow L^{\frac{pN}{N-p}}(\Omega)$
by reducing Solimini's decomposition as follows. By Friedrichs
inequality, the sequence $u_k$ has a $L^{p}$ -bound, which and eliminates all profiles subjected to unbounded deflations.
Furthermore, there are no concentrations with $t_{k}\to\infty$ at cores lying outside of $\Omega$, since the corresponding weak
limits will necessarily be equal zero. For the same reason, if $t_{k}$
if bounded (equivalently, with $t_{k}=1)$, there are no concentration
terms with translations by unbounded sequences $y_{k}$. As a result,
every bounded sequence in $W_{0}^{1,p}(\Omega)$ has a subsequence
consisting of a countable sum of local concentrations $t_{k}^{\frac{N-p}{p}}w(t_{k}(\cdot-y)),$
$t_{k}\to\infty$, and a remainder vanishing in $L^{\frac{pN}{N-p}}$.
This local concentration can be easily transferred, using the exponential map, to compact Riemannian manifolds, giving
rise to profile decompositions, which were introduced, under the name of global compactness by Struwe \cite{Struwe84} for Palais-Smale sequences,
(see Theorem~3.1 in \cite{DruHeRo}), although, allowing infinitely many terms and arbitrary profiles, 
they can be easily re-established for general sequences. 
\end{ex}
\subsection{Compactness as reduced cocompactness}
Reduction to subspaces may in some cases eliminate all concentrations, which
makes a restriction of the cocompact imbedding to a subspace a compact imbedding. 
In this case the argument does not involve profile decompositions and uses only cocompactness. 
We give here two examples: subspaces defined by restriction of support (compactness of Sobolev imbeddings
on domains thin at infinity) and subspaces defined by a compact symmetry. The following statement is simpler
and more general than its partial counterparts in \cite{ccbook}, and we bring it with a proof.
\begin{prop}
Let $M$ be a complete Riemannian $N$-manifold, periodic
relative to some subgroup $G$ of its isometries.
Let $X$ be a reflexive Banach space cocompactly imbedded into $L^{q}(M)$ for some $1<q<\infty$,
and assume that $\|u\circ\eta\|_{q}=\|u\|_{q}$ for all $\eta\in G$.
Assume that the imbedding $X\hookrightarrow L^q(M)$ is cocompact relative to the action of $G$.
Let $\Omega\subset M$
be an open set such that the measure of the set $\liminf\eta_{k}\Omega$, $\eta_{k}\in G$,
is zero whenever $\eta_k x_0$ has no convergent subsequence for some $x_0\in M$.
If $X(\Omega)$ is the subspace of $X$ consisting
of functions that equal zero a.e. on $M\setminus\Omega$, then the
imbedding $X(\Omega)\hookrightarrow L^{q}(\Omega)$ is compact.
 \end{prop}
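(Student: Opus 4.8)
The plan is to show that any bounded sequence $\{u_k\}\subset X(\Omega)$ is $G$-weakly convergent to zero along a subsequence after subtracting its weak limit, and then invoke cocompactness of $X\hookrightarrow L^q(M)$ to upgrade this to norm convergence in $L^q$. First I would pass to a subsequence so that $u_k\rightharpoonup u$ in $X$; by the reflexivity of $X$ and the continuity of $X\hookrightarrow L^q(M)$ this weak limit exists and, since $X(\Omega)$ is a closed subspace, $u\in X(\Omega)$. Replacing $u_k$ by $u_k-u$, it suffices to treat the case $u_k\rightharpoonup 0$ and to prove $u_k\to 0$ in $L^q(\Omega)$; here I would need the mild observation that the translated sequences $\eta_k u_k$ inherit a uniform $X$-bound because $G$ acts isometrically (at least on $L^q$, and we only need weak testing, so boundedness in $X$ suffices via the uniform boundedness principle applied to the orbit, or simply because dislocation sets are bounded in operator norm on $X$).

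The core step is to verify $u_k\cw 0$ relative to $G$, i.e. that $\eta_k u_k\rightharpoonup 0$ in $X$ for every sequence $\{\eta_k\}\subset G$. I would split into two cases according to the behaviour of $\eta_k x_0$ for a fixed base point $x_0\in M$. If $\eta_k x_0$ has a convergent subsequence, then along that subsequence $\eta_k$ converges pointwise (in the strong operator topology on $L^q$, using that $G$ is a group of isometries of a periodic manifold and the action is continuous), hence $\eta_k u_k\rightharpoonup 0$ follows from $u_k\rightharpoonup 0$ together with equicontinuity of the adjoints. If instead $\eta_k x_0$ has no convergent subsequence, I would use the support hypothesis: each $\eta_k u_k$ is supported in $\eta_k\Omega$, and the hypothesis says $\bigl|\liminf_k \eta_k\Omega\bigr|=0$. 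Testing against a fixed $\varphi$ in a dense subset of the dual (say a compactly supported function), the integral $\int \eta_k u_k\,\bar\varphi$ sees $u_k$ only on the set where $\supp\varphi$ meets $\eta_k\Omega$; since $\supp\varphi$ is compact and $\liminf_k\eta_k\Omega$ is null, the portion of $\supp\varphi$ lying in $\eta_k\Omega$ for infinitely many $k$ is null, so the $L^q$-mass of $u_k$ contributing to the pairing tends to zero (using the uniform $L^q$-bound and absolute continuity of the integral), giving $\eta_k u_k\rightharpoonup 0$ in $L^q$ and hence, by density, in $X$.

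Having established $u_k\cw 0$, the cocompactness of $X\hookrightarrow L^q(M)$ gives $\|u_k\|_{L^q(M)}\to 0$, in particular $\|u_k\|_{L^q(\Omega)}\to 0$; undoing the subtraction of $u$ shows $u_k\to u$ in $L^q(\Omega)$, which is precisely compactness of $X(\Omega)\hookrightarrow L^q(\Omega)$. The main obstacle I anticipate is the second case of the dichotomy: making rigorous the passage from ``$\liminf_k\eta_k\Omega$ has measure zero'' to ``the pairing $\int\eta_k u_k\,\bar\varphi\to 0$'' requires care, because $\liminf$ of sets is the set of points eventually in $\eta_k\Omega$, whereas what one directly controls is, for each $k$, the set $\supp\varphi\cap\eta_k\Omega$; the clean way is to fix $\epsilon>0$, use tightness/uniform integrability of $\{|u_k|^q\}$ on the compact set $\eta_k^{-1}(\supp\varphi)$ — or rather to note that by Hölder $\bigl|\int_{\eta_k\Omega}u_k\bar\varphi\bigr|\le \|u_k\|_{L^q}\,\|\varphi\|_{L^{q'}(\eta_k\Omega\cap\supp\varphi)}$ and that $\limsup_k\|\varphi\|_{L^{q'}(\eta_k\Omega\cap\supp\varphi)}=0$ follows from $|\liminf_k\eta_k\Omega|=0$ by a Fatou-type argument on the indicator functions $\mathbf{1}_{\eta_k\Omega}\mathbf{1}_{\supp\varphi}$. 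A secondary technical point is confirming that strong operator convergence of a subsequence of $\{\eta_k\}$ on $L^q$ (from convergence of $\eta_k x_0$ on a periodic manifold) is legitimate; this is where one uses that $G$ is closed in the isometry group and the manifold is periodic, exactly as in the dislocation-set framework recalled earlier in the survey.
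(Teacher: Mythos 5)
Your overall strategy is the same as the paper's: reduce to $u_k\rightharpoonup 0$, prove $u_k\circ\eta_k\rightharpoonup 0$ for an arbitrary sequence $\eta_k\in G$ via the dichotomy on whether $\eta_k x_0$ has a convergent subsequence, and then conclude by cocompactness of $X\hookrightarrow L^q(M)$. Your compact case (convergence of $\eta_k$ to an isometry along a subsequence, then $u_k\rightharpoonup 0$, then the sub-subsequence principle) and the final cocompactness step match the paper's proof.

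The non-compact case is where your justification, as written, fails. You claim that $\limsup_k\|\varphi\|_{L^{q'}(\eta_k\Omega\cap\,\supp\varphi)}=0$ follows from $\bigl|\liminf_k\eta_k\Omega\bigr|=0$ ``by a Fatou-type argument''. Fatou gives $\int_{\liminf_k\eta_k\Omega}|\varphi|^{q'}\le\liminf_k\int_{\eta_k\Omega}|\varphi|^{q'}$, which is the wrong direction; the reverse Fatou inequality bounds $\limsup_k\int_{\eta_k\Omega}|\varphi|^{q'}$ by the integral over $\limsup_k\eta_k\Omega$, i.e. it needs the set of \emph{recurrent} points to be null, and nullity of the $\liminf$ does not imply this (sets can have null $\liminf$ along every subsequence while each intersection with a fixed compact set keeps measure bounded below --- think of the indicator sets of Rademacher functions). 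Moreover you are trying to prove more than is needed: vanishing of the pairings along the whole sequence. The paper's route, which is the correct patch, is to show only that every weakly convergent subsequence of $u_k\circ\eta_k$ has zero limit (this suffices by the sub-subsequence principle): along such a subsequence each tail is supported in $\bigcup_{j\ge m}\eta_{k_j}^{-1}\Omega$, a weakly closed constraint, so the weak limit $w$ vanishes a.e.\ off the set of recurrent points of the translated domains, and the measure-zero hypothesis, applied to that subsequence (legitimate, since it is quantified over all sequences in $G$ with non-compact orbit, and orbits of $\eta_k$ and $\eta_k^{-1}$ are simultaneously non-precompact), forces $w=0$ as an element of $X$. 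Two smaller remarks: with the action $u\mapsto u\circ\eta_k$ the supports are the sets $\eta_k^{-1}\Omega$, not $\eta_k\Omega$ (immaterial, by the group property); and the recurrent set is a $\limsup$ rather than a $\liminf$ of the translated domains --- an imprecision your argument shares with the wording of the hypothesis and of the paper's own proof, so you should at least acknowledge it rather than hide it inside an incorrect Fatou step.
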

\begin{proof}
Let $u_{k}\in X(\Omega)$ be a bounded sequence, and assume
without loss of generality, that $u_{k}\rightharpoonup0$. By assumption,
if $\eta_{k}x_0$ has no convergent subsequence, then any weakly convergent
subsequence of $u_{k}\circ\eta_{k}$ converges weakly to a function, supported, as a $L^{q}$-function, 
on a set of measure zero, i.e., by the imbedding, to the zero element of $X$. Assume now that, on a
renumbered subsequence, $\eta_{k}\to\eta$. Then 
$\mathrm{w-lim}\, u_{k}\circ\eta_{k}=\mathrm{w-lim}\, u_{k}\circ\eta=0$ in $X$. 
Since this is true for any convergent subsequence, this is true for the original $\eta_{k}$.
We conclude then that in any case $u_{k}\circ\eta_{k}\rightharpoonup 0$,
and thus, by cocompactness of the imbedding, $u_{k}\to0$ in $L^{q}(M)$.
\end{proof}
The following theorem includes as the particular cases, the Strauss lemma \cite{Strauss}
as well as its generalization to subspaces with the block-radial symmetry.
\begin{thm} (Compactness under coersive symmetries, \cite{SkrTin})
Let $M$ be a complete Riemannian $N$-manifold
with a transitive group $G$ of isometries. Let $X$ be a Banach space
cocompactly imbedded into $L^{q}$ for some $q\in(1,\infty)$, relative
to the actions of $G$. Let $\Omega\subset G$ be a compact subgroup
and let $X_{\Omega}$ be a subspace of $X$ invariant with respect
to actions of $\Omega$. The imbedding $X_{\Omega}\hookrightarrow L^{q}(M)$
is compact if and only if for any $t>0$ the set 
\[
\lbrace x\in M:\;\mathrm{diam}(\Omega x)\le t\rbrace
\]
is bounded.
\end{thm}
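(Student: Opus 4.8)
The plan is to prove the two implications separately, using the cocompactness of $X\hookrightarrow L^q(M)$ relative to $G$ together with the geometric hypothesis on the diameter sets $S_t=\{x\in M:\operatorname{diam}(\Omega x)\le t\}$. For the forward direction, suppose some $S_{t_0}$ is unbounded; I would produce a non-compact sequence in $X_\Omega$ by taking a fixed nonzero $w\in X_\Omega$ supported near a point $x_0$ whose $\Omega$-orbit is small, then pushing it out to infinity along $G$. The subtlety is that translating $w$ by a general $g\in G$ need not keep it in $X_\Omega$, so one must first symmetrize: replace $g w$ by its average over $\Omega$, i.e.\ consider $v_k = \int_\Omega (gu\circ\eta)(\cdot)\, d\eta$ type construction — more precisely, pick $x_k\in S_{t_0}$ escaping to infinity, transport $w$ to a neighborhood of $x_k$ via an element $g_k\in G$, and average over $\Omega$; because $\operatorname{diam}(\Omega x_k)\le t_0$, the averaged bump stays essentially concentrated in a bounded neighborhood of $x_k$, so its $L^q(M)$ norm stays bounded below while $v_k\rightharpoonup 0$, contradicting compactness of $X_\Omega\hookrightarrow L^q(M)$.

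For the reverse direction, assume every $S_t$ is bounded and let $u_k\in X_\Omega$ be bounded with $u_k\rightharpoonup 0$; I must show $u_k\to 0$ in $L^q(M)$. By the cocompactness of $X\hookrightarrow L^q(M)$ relative to $G$ it suffices to show $u_k\cw 0$, i.e.\ $u_k\circ\eta_k\rightharpoonup 0$ in $X$ for every sequence $\eta_k\in G$. Fix such a sequence. If $\{\eta_k x_0\}$ has a convergent subsequence then $\eta_k$ has a subsequence converging (in the compact-open sense, since $G$ acts by isometries on a complete manifold) to some $\eta\in G$, and then $u_k\circ\eta_k$ and $u_k\circ\eta$ have the same weak limit, which is $0$ by continuity of composition with the fixed isometry $\eta$ and $u_k\rightharpoonup 0$. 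So the only case to handle is when $\eta_k x_0$ escapes every compact set. Here the key observation is that $\Omega$-invariance of $u_k$ forces $u_k\circ\eta_k$ to be invariant under the conjugated group $\eta_k^{-1}\Omega\eta_k$; equivalently, the values of $u_k$ near $\eta_k x_0$ are constant on the $\Omega$-orbits through those points, whose diameters, by unboundedness of $\eta_k x_0$ combined with the boundedness of each $S_t$, must tend to infinity. A bounded-in-$X$ function that is constant on orbits of growing diameter, living in $L^q$ with $q<\infty$, must have vanishing mass on any fixed bounded region in the limit; making this quantitative (split any test function's support into the part where the relevant orbit diameter is $\le t$ and the part where it exceeds $t$, let $k\to\infty$ then $t\to\infty$) gives $u_k\circ\eta_k\rightharpoonup 0$.

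The main obstacle I anticipate is the escaping-$\eta_k$ case in the reverse direction: turning ``$u$ is $\Omega$-invariant and bounded in $X$'' into a genuine decay estimate for $u$ on the regions where $\Omega$-orbits are large. This is exactly the mechanism behind the classical Strauss lemma (radial $H^1$ functions decay like $|x|^{-(N-1)/2}$), and the abstract version needs the continuous imbedding $X\hookrightarrow L^q$ plus a covering/volume argument: a bounded-$X$, $\Omega$-invariant function restricted to a tube around a large orbit has $L^q$-norm controlled by the $X$-norm divided by a power of the orbit's volume, which grows with its diameter. I would either invoke such a pointwise or local-norm estimate as a lemma (citing \cite{SkrTin} or \cite{ccbook}) or derive it from cocompactness itself by a compactness-contradiction argument paralleling the forward direction. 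Once that decay is in hand, verifying $u_k\circ\eta_k\rightharpoonup 0$ against a dense class of test functions is routine, and the conclusion $u_k\to 0$ in $L^q(M)$ follows immediately from $D_G$-cocompactness of $X\hookrightarrow L^q(M)$.
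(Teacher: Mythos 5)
The paper itself gives no proof of this theorem (it is quoted from the preprint \cite{SkrTin}), so your argument can only be judged on its own merits; its overall architecture -- sufficiency via cocompactness together with the dichotomy on whether $\eta_k x_0$ stays bounded, necessity via transported and $\Omega$-averaged bumps -- is certainly the intended mechanism. The genuine gap is in the key analytic step of your sufficiency direction: you pass from ``$\operatorname{diam}(\Omega y)$ large'' to ``small local $L^q$-mass of an $\Omega$-invariant function near $y$'' by asserting that the volume of an $r$-tube around the orbit grows with the orbit's diameter. That is false for disconnected orbits. Take $M=\R^N$, $G$ the full isometry group, and $\Omega=\lbrace \mathrm{id},-\mathrm{id}\rbrace$: every orbit $\lbrace x,-x\rbrace$ has diameter $2|x|$, so the diameter sets in the statement are all bounded, yet an $r$-tube around the orbit has volume bounded independently of $|x|$, no decay estimate for even functions holds, and indeed $H^1_{\mathrm{even}}(\R^N)\hookrightarrow L^q(\R^N)$ is not compact (translate an even bump to $\pm y_k$, $|y_k|\to\infty$). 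What your argument actually needs is that the number of $t$-separated points on $\Omega x$ tends to infinity as $x$ leaves compact sets (equivalently, tube volume $\to\infty$); a large diameter yields this only when the orbits are connected. So you must either add an orbit-connectedness hypothesis (which still covers Strauss and block-radial symmetry with all blocks of dimension at least $2$) or replace the diameter by the maximal number of $t$-separated points in the orbit; with that correction, the disjoint-balls estimate $\int_{B_r(y)}|u|^q\le \|u\|_{L^q}^q/N(y,r)$ for $\Omega$-invariant $u$ does give the vanishing you want.

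Two further points need tightening. Testing $u_k\circ\eta_k$ against a dense class of functions only yields weak convergence to zero in $L^q$, whereas cocompactness consumes $D$-weak convergence in $X$; to transfer, argue as in the paper's proof of the preceding proposition: assume $X$ reflexive, extract an $X$-weak limit along a subsequence, note that the embedding sends it to the $L^q$-weak limit $0$, and use injectivity of the embedding (reflexivity is also what legitimizes your reduction to $u_k\rightharpoonup 0$). In the necessity direction, for an abstract Banach space $X$ nothing guarantees a localized nonnegative element $w$, nor that the $\Omega$-average (a Bochner integral, requiring continuity of $\eta\mapsto (g_k w)\circ\eta$ in $X$) remains in $X$ and lands in $X_\Omega$ with an $L^q$-norm bounded from below; these are structural assumptions on $X$, satisfied in the Sobolev-type examples the theorem is meant to cover, and they should be stated explicitly since the abstract formulation does not supply them.
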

\subsection{Flask spaces}
Profile decomposition can be naturally extended to a class of subspaces
that are not gauge-invariant. Del Pino and Felmer
\cite{PinoFelmer} have discovered them as Sobolev spaces of ``flask domains'' in $\R^{N}$. 
On the functional-analytic level flask spaces were defined in \cite{ccbook}. 
\begin{defn}
Let $D$ be a set of surjective isometric operators on
a Banach space $X$. A subspace $X_{0}\subset X$ is called a $D$-flask
subspace if the set of sequential weak limits $\mathrm{w-lim}(DX_{0})$
of sequences $\lbrace g_{k}u_k,\; g_{k}\in D,u_{k}\in X_{0}\rbrace_{k\in\N}$
remains in $DX_{0}$.
 \end{defn}
This property does not hold, in particular, if $X\hookrightarrow L^{q}(\R^{N})$,
$D$ includes dilations, and $X_0\subsetneq X$ contains a continuous function, or if $D$ includes all translations
and for any $R>0$ the subspace $X_0\subsetneq X$, contains a function $f_R$ positive on some ball of radius $R$.
\par
On the other hand if $X(\Omega)=W_{0}^{1,p}(\Omega)$
where $\Omega=(-1,1)\times\R^{N-1}\cup\R^{N-1}\times(-1,1)$, an infinite
cross, and $D$ is a group of shifts by $\Z^{N}$, then any nonzero
translated weak limit of a sequence from $W_{0}^{1,p}(\Omega)$ will
be supported on a domain that is a translate of $\Omega$, or a translate
of $(-1,1)\times\R^{N-1}$, or a translate of $\R^{N-1}\times(-1,1)$,
which in any case is a subset of $\Omega$. We have an immediate statement
which is an elementary generalization of Proposition~3.5 of \cite{ccbook}.
\begin{thm}
Assume that $D$ be a group of surjective isometric operators
on a Banach space $X$, such that the profile decomposition \eqref{pd} holds true. 
If $X_{0}\subset X$ is a $D$-flask subspace, then any bounded sequence in $X_0$ has a 
profile decomposition \eqref{pd} with profiles $w^{(n)}\in X_{0}$. 
\end{thm}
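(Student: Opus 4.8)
The plan is to start from the profile decomposition \eqref{pd} already available for the ambient space $X$ and simply verify that, when the given bounded sequence lies in the flask subspace $X_0$, every profile that it produces already belongs to $X_0$. Concretely: let $\{u_k\}\subset X_0$ be bounded, view it as a bounded sequence in $X$, and apply the hypothesis to extract a renumbered subsequence together with sequences $\{g_k^{(n)}\}\subset D$ and profiles $w^{(n)}\in X$ with $g_k^{(n)^{-1}}u_k\rightharpoonup w^{(n)}$, the asymptotic-decoupling relations $g_k^{(n)^{-1}}g_k^{(m)}\rightharpoonup 0$ for $m\neq n$, and the remainder $r_k:=u_k-\sum_n g_k^{(n)}w^{(n)}$ converging to zero $D$-weakly (and in $Y$ if the relevant imbedding is cocompact). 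All the quantitative content — the energy split or norm inequality, the uniform convergence of the series in $k$, and the vanishing of the remainder — is inherited verbatim from \eqref{pd}, so nothing there needs to be redone.

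The single thing requiring an argument is that $w^{(n)}\in X_0$ for each $n$. Here is where the $D$-flask property enters. Fix $n$. Since $g_k^{(n)}\in D$ is surjective isometric, its inverse $g_k^{(n)^{-1}}$ is again a surjective isometry, hence (because $D$ is a \emph{group}) lies in $D$; therefore $g_k^{(n)^{-1}}u_k$ is a sequence of the form $\{h_k u_k\}$ with $h_k\in D$ and $u_k\in X_0$. Its weak limit $w^{(n)}$ is by definition an element of $\mathrm{w\text{-}lim}(DX_0)$, and the flask hypothesis says $\mathrm{w\text{-}lim}(DX_0)\subset DX_0$. So $w^{(n)}=g\,v$ for some $g\in D$ and $v\in X_0$. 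It then remains to observe that $DX_0\subset X_0$: indeed $X_0$ being a $D$-flask \emph{subspace} of $X$ and $D$ a group means in particular that applying any $g\in D$ to an element of $X_0$ lands us again in the set whose weak limits stay inside $DX_0$; more directly, for any $g\in D$ and $v\in X_0$ the constant sequence $g v$ has weak limit $gv\in DX_0$, and applying $g^{-1}\in D$ to it shows $v$-translates are consistent, but the clean route is: $DX_0$ is itself a $D$-invariant set containing $X_0$, and since each single operator $g\in D$ maps $X_0$ into $\mathrm{w\text{-}lim}(DX_0)\subset DX_0$ and is invertible with inverse in $D$, one gets $g X_0\subset X_0$ for all $g$, hence $DX_0=X_0$. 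Consequently $w^{(n)}=gv\in X_0$, which is exactly what we wanted. (This is the step to present carefully; it is the only place the hypothesis "$X_0$ is a $D$-flask subspace" is used, and it is precisely Proposition~3.5 of \cite{ccbook} in the special case of that reference.)

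I expect the main obstacle — really the only subtle point — to be pinning down that $DX_0=X_0$, i.e. that the flask condition, stated only about weak limits of sequences $g_k u_k$, already forces the subspace to be $D$-invariant. The resolution is that the group structure of $D$ makes constant sequences available: for $g\in D$ and $v\in X_0$, the sequence $g_k\equiv g$, $u_k\equiv v$ gives weak limit $gv\in\mathrm{w\text{-}lim}(DX_0)\subset DX_0$, so $gv\in DX_0$ for every such pair, i.e. $DX_0$ is closed under $D$; combined with $\mathrm{id}\in D$ this already gives $X_0\subset DX_0$ and closure under the group action, and then invertibility upgrades this to $DX_0=X_0$. Once that is in hand, the proof is a two-line assembly: apply \eqref{pd}, note $g_k^{(n)^{-1}}u_k$ is a $D$-image of an $X_0$-sequence, invoke the flask property plus $DX_0=X_0$ to conclude $w^{(n)}\in X_0$, and carry over the remaining conclusions of \eqref{pd} unchanged. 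I would keep the write-up to roughly that length, since everything past the membership claim is literally a restatement of the ambient profile decomposition.
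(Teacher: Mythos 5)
Your overall strategy (apply \eqref{pd} in the ambient space $X$ and then only worry about where the profiles live) is the same as the paper's, but the step you single out as the crux is wrong: the flask hypothesis does \emph{not} imply $DX_0=X_0$, and the original profiles need not lie in $X_0$. From $gX_0\subset\mathrm{w\text{-}lim}(DX_0)\subset DX_0$ and invertibility of $g$ you can only conclude $X_0\subset g^{-1}DX_0=DX_0$, which is trivial; nothing forces $gX_0\subset X_0$. Indeed, the whole point of the flask notion (the paper introduces it precisely for subspaces ``that are not gauge-invariant'') is that $DX_0$ may be strictly larger than $X_0$: in the paper's model example $X_0=W_0^{1,p}(\Omega)$ with $\Omega$ the infinite cross and $D$ the group of $\Z^N$-shifts, a bump supported near the center, shifted by $(5,\dots,5)$, leaves $\Omega$, so $DX_0\neq X_0$, yet $X_0$ is a flask subspace. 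Consequently your conclusion ``$w^{(n)}\in X_0$ for the profiles as produced by \eqref{pd}'' is unjustified: the flask property only gives $w^{(n)}\in\mathrm{w\text{-}lim}(DX_0)\subset DX_0$, i.e. $w^{(n)}=h_n v_n$ with $h_n\in D$, $v_n\in X_0$, and a profile can genuinely sit outside $X_0$ (e.g., for the cross, a gauge sequence of shifts $(k,5)$ applied to bumps travelling along the horizontal strip yields a weak limit supported in a translated strip not contained in $\Omega$).

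The missing idea, which is exactly what the paper's two-line proof does, is to \emph{correct the gauges} rather than claim invariance: since $w^{(n)}\in DX_0$, pick $g_n\in D$ with $\tilde w^{(n)}=g_nw^{(n)}\in X_0$, and rewrite each elementary concentration as $g_k^{(n)}w^{(n)}=\bigl(g_k^{(n)}g_n^{-1}\bigr)\tilde w^{(n)}$ with the new gauge sequence $\tilde g_k^{(n)}=g_k^{(n)}g_n^{-1}\in D$ (here the group structure of $D$ is used). One then checks that the defining properties of \eqref{pd} survive this rewriting because the $g_n$ are fixed bounded operators: $\tilde g_k^{(n)-1}u_k=g_ng_k^{(n)-1}u_k\rightharpoonup g_nw^{(n)}=\tilde w^{(n)}$, and $\tilde g_k^{(n)-1}\tilde g_k^{(m)}=g_ng_k^{(n)-1}g_k^{(m)}g_m^{-1}\rightharpoonup0$ for $m\neq n$, while the sum $\sum_n g_k^{(n)}w^{(n)}$ and hence the remainder are unchanged. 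With that replacement your argument becomes the paper's proof; without it, the proof as written fails at the asserted lemma $DX_0=X_0$.
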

\begin{proof}
Since $X_{0}$ is a $D$-flask space, for each $n\in\N$ there
exists $g_{n}\in D$ such that $\tilde{w}^{(n)}=g_{n}w^{(n)}\in X_{0}$.
The profile decomposition \eqref{pd} can be rewritten then with $\tilde{w}^{(n)}$
replaced with $w^{(n)}$ and $g_{k}^{(n)}$ replaced by $\tilde{g}_{k}^{(n)}=g_{k}^{(n)}g_{n}^{-1}$. 
\end{proof}
The following sufficient condition for a $D$-flask set is a slightly generalized version of Remark~9.1(d)
in \cite{ccbook}.
\begin{thm}
Let $M$ be a complete Riemannian manifold, and let $G$ be
a subgroup of its isometries. Let $X$ be a reflexive Banach space
continuously imbedded into $L^{q}(M)$, $1<q<\infty$, let $D=\lbrace u\mapsto u\circ\eta\rbrace_{\eta\in G}$
and assume that $\|u\circ\eta\|_{X}=\|u\|_{X}$ for all $\eta\in G$.
Let $X(\Omega)$, $\Omega\subset M,$ be a subspace of all functions
in $X$ that vanish a.e. in $M\setminus\Omega$. If for any sequence
$\eta_{k}\in M$ there exist $\eta\in G$ and a set of zero measure
$Z\subset M$, such that $\liminf\eta_{k}\Omega\subset\eta\Omega\cup Z$,
then $X(\Omega)$ is a $D$-flask set. 
\end{thm}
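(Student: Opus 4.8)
The plan is to verify the defining property of a $D$-flask subspace directly: given $g_k u_k \rightharpoonup v$ in $X$ with $g_k \in D$ (so $g_k u = u\circ\eta_k$ for some $\eta_k \in G$) and $u_k \in X(\Omega)$, I must exhibit some $\eta \in G$ with $v \in X(\eta\Omega)$, i.e. $v$ vanishes a.e. off $\eta\Omega$, which places $v$ in $D\,X(\Omega)$ since $u\mapsto u\circ\eta^{-1}$ carries $X(\eta\Omega)$ into $X(\Omega)$ isometrically.

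First I would pass to the measure-space picture. Since $X \hookrightarrow L^q(M)$ continuously and $D$ acts isometrically on $X$, the sequence $u_k\circ\eta_k$ is bounded in $L^q(M)$; moreover weak convergence in $X$ implies (by continuity of the imbedding and reflexivity, testing against $L^{q'}$ functionals that factor through $X^*$) that $u_k\circ\eta_k \rightharpoonup v$ weakly in $L^q(M)$ as well. Now each $u_k$ vanishes a.e. on $M\setminus\Omega$, so $u_k\circ\eta_k$ vanishes a.e. on $M\setminus\eta_k^{-1}\Omega$; equivalently, $u_k\circ\eta_k$ is supported (as an $L^q$ function) in $\eta_k^{-1}\Omega$. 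Relabelling $\eta_k^{-1}$ as the relevant sequence, the hypothesis supplies $\eta \in G$ and a null set $Z$ with $\liminf_k \eta_k^{-1}\Omega \subset \eta\Omega \cup Z$.

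The crux is then the following semicontinuity-of-support fact: if $f_k \rightharpoonup v$ in $L^q(M)$ and each $f_k$ vanishes a.e. outside a measurable set $A_k$, then $v$ vanishes a.e. outside $\liminf_k A_k$. To see this, fix any measurable $E$ with finite measure contained in $M\setminus\liminf_k A_k$; for a.e. $x\in E$ there are infinitely many $k$ with $x\notin A_k$, and by a standard measure-theoretic argument (or by testing against $\mathbf{1}_{E}\,\mathrm{sgn}(v)\,|v|^{q-1}\in L^{q'}$ after truncating to the portion of $E$ that lies outside $A_k$ for a fixed tail of indices, using a diagonal/subsequence selection so the tested sets stabilize) the weak limit $v$ must vanish a.e. on $E$. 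Since $\liminf_k A_k$ can be exhausted from outside by such sets $E$, we get $v = 0$ a.e. off $\liminf_k A_k \subset \eta\Omega\cup Z$, hence a.e. off $\eta\Omega$; the null set $Z$ is irrelevant for $L^q$ functions. Therefore $v\circ\eta^{-1}$ vanishes a.e. off $\Omega$, i.e. $v\circ\eta^{-1}\in X(\Omega)$ provided it lies in $X$ — which it does, since $u\mapsto u\circ\eta^{-1}$ is a surjective isometry of $X$ lying in $D$ and $v\in X$. Thus $v = g(v\circ\eta^{-1})$ with $g\in D$ and $v\circ\eta^{-1}\in X(\Omega)$, so $v\in D\,X(\Omega)$, which is exactly the flask property.

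I expect the main obstacle to be the rigorous justification of the support-semicontinuity step, in particular the subsequence bookkeeping needed to turn "$x\notin A_k$ for infinitely many $k$" into a statement about the single weak limit $v$: one cannot test against a fixed $L^{q'}$ function since the exceptional sets $M\setminus A_k$ vary with $k$. The clean way around this is to observe that $v$ is a countable object (it is determined by its integrals against a countable dense set in $L^{q'}$, or by its values on a countable generating algebra of sets), extract a single subsequence along which all the relevant tests converge, and then argue on that subsequence; since every subsequence of $g_k u_k$ still converges weakly to the same $v$, the conclusion for $v$ is independent of the extraction. Everything else — reflexivity giving $L^q$-weak convergence from $X$-weak convergence, isometry of the $G$-action transferring $X(\Omega)$ to $X(\eta\Omega)$ — is routine.
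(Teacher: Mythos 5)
Your overall reduction is the right reading of the flask property (show that the weak limit $v$ of $u_k\circ\eta_k$ vanishes a.e.\ off some $\eta\Omega$, then move it back into $X(\Omega)$ by an element of $D$), and transferring weak convergence from $X$ to $L^q$ is fine. Note that the paper itself gives no proof of this theorem (it refers to Remark~9.1(d) of \cite{ccbook}), so the comparison is with what an argument must actually establish --- and there your central ``support-semicontinuity'' lemma is false. For weak convergence in $L^q$ one only gets that $v$ vanishes a.e.\ off $\limsup_k A_k$: testing against $\varphi\in L^{q'}$ supported in $\bigcap_{k\ge n}(M\setminus A_k)$ and taking the union over $n$ is all that weak convergence allows. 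The $\liminf$ version fails: on $[0,1]$ let $R_k$ be the set where the $k$-th binary digit is $1$ (independent sets of measure $1/2$), $A_k=[0,1]\setminus R_k$, $f_k=2\cdot\mathbf{1}_{A_k}$; then $f_k\rightharpoonup \mathbf{1}_{[0,1]}$ in every $L^q$, each $f_k$ vanishes off $A_k$, but $\liminf_k A_k$ is a.e.\ empty by Borel--Cantelli. The congruence structure does not rescue the lemma either: with $C=\bigcup_{m\ge1}(m+R_m)\subset\R$, $\Omega=\R\setminus C$, $\eta_k$ translation by $k$, $X=L^q(\R)$ and $u_k=2\cdot\mathbf{1}_{\Omega\cap[k,k+1)}$, the supports of $u_k\circ\eta_k$ are isometric copies $\eta_k^{-1}\Omega$, the weak limit is $\mathbf{1}_{[0,1)}$, yet a.e.\ point of $[0,1)$ lies outside $\eta_k^{-1}\Omega$ for infinitely many $k$, so the limit is not supported in the $\liminf$ of the supports (the theorem is not contradicted here, since $[0,1)\subset\Omega$, but your lemma is).

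Your anticipated fix --- extracting one subsequence along which countably many tests converge --- does not touch this obstruction: in the examples above all tests converge (in the ``typewriter'' variant $f_k=1-\mathbf{1}_{I_k}$, $|I_k|\to0$, the convergence is even in norm), so the problem is not null-set or subsequence bookkeeping; it is that points of $M\setminus\liminf_k A_k$ are only guaranteed to be missed by \emph{infinitely many} $A_k$, which weak convergence cannot exploit. What the $\liminf$ formulation really needs is pointwise a.e.\ convergence of a subsequence of $u_k\circ\eta_k$ (then the argument you sketch is immediate); this is available when $X$ is locally compactly embedded, as in the Sobolev-space setting of \cite{ccbook}, but is not furnished by the hypotheses you invoke. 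Alternatively one argues by dichotomy on $\eta_k$, as in the paper's preceding proposition: if a subsequence of $\eta_k$ converges to an isometry $\eta$, then the weak limit of $u_k\circ\eta_k$ equals that of $u_k\circ\eta$, which lies in $DX(\Omega)$ because $X(\Omega)$ is weakly closed and $u\mapsto u\circ\eta$ is weakly continuous; the $\liminf\eta_k\Omega$ hypothesis is there for the non-compact case, where some ingredient beyond weak $L^q$ convergence must be supplied. (There is also a small bookkeeping slip at the end: from $v$ supported in $\eta\Omega$ the function you want is $v\circ\eta$, not $v\circ\eta^{-1}$.) As written, the proof has a genuine gap at its central step.
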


\end{document}